\documentclass{amsart}

\usepackage{amssymb}
\usepackage[bookmarks=true, bookmarksopen=true,%
bookmarksdepth=3,bookmarksopenlevel=2,%
colorlinks=true,%
linkcolor=blue,%
citecolor=blue,%
filecolor=blue,%
menucolor=blue,%
urlcolor=blue]{hyperref}
\usepackage{cleveref}
\usepackage[all]{xy}

\usepackage{tikz}
\usepackage{tikz-cd}
\usepackage{mathtools}
\usetikzlibrary{arrows,decorations.markings, matrix}
\usepackage{float}
\usetikzlibrary{decorations.pathreplacing}
\usepackage[textsize=scriptsize]{todonotes}
\usepackage{verbatim}

\theoremstyle{definition}
\newtheorem{nul}{}[section]
\newtheorem{dfn}[nul]{Definition}

\newtheorem{rmk}[nul]{Remark}

\newtheorem{exm}[nul]{Example}

\newtheorem*{qst*}{Question}
\newtheorem*{dfn*}{Definition}
\newtheorem*{axm*}{Axiom}
\newtheorem*{ntn*}{Notation}
\newtheorem*{exm*}{Example}
\newtheorem*{exr*}{Exercise}
\newtheorem*{int*}{Intuition}
\newtheorem*{rmk*}{Remark}

\newtheorem{thm}[nul]{Theorem}
\newtheorem{prop}[nul]{Proposition}
\newtheorem{lem}[nul]{Lemma}

\newtheorem{cor}[nul]{Corollary}
\newtheorem*{thm*}{Theorem}
\newtheorem*{prop*}{Proposition}
\newtheorem*{cor*}{Corollary}
\newtheorem*{lem*}{Lemma}
\newtheorem*{cnj*}{Conjecture}

\newcommand{\bF}{\mathbf{F}}

\newcommand{\bS}{\mathbf{S}}
\newcommand{\bZ}{\mathbf{Z}}

\newcommand{\cO}{\mathcal{O}}

\newcommand{\Mod}{\mathrm{Mod}}
\newcommand{\LMod}{\mathrm{LMod}}
\newcommand{\RMod}{\mathrm{RMod}}
\newcommand{\Bimod}{\mathrm{Bimod}}
\newcommand{\LModft}[1]{\mathrm{LMod}{(#1)^{\mathrm{ft}}}}

\newcommand{\Modomega}[1]{\mathrm{Mod}{(#1)^{\omega}}}
\newcommand{\LModomega}[1]{\mathrm{LMod}{(#1)^{\omega}}}

\newcommand{\RModft}{\mathrm{RMod}^{\mathrm{ft}}}

\newcommand{\Maps}{\mathrm{Maps}}
\newcommand{\Hom}{\mathrm{Hom}}
\newcommand{\Fun}{\mathrm{Fun}}
\newcommand{\op}{\mathrm{op}}
\newcommand{\PrL}{\mathrm{Pr}^{\mathrm{L}}}


\usepackage{color}

\newcommand{\F}{\mathbf{F}}

\newcommand{\Q}{\mathbf{Q}}
\newcommand{\Z}{\mathbf{Z}}

\newcommand{\mrm}[1]{\mathrm{#1}}

\DeclareMathOperator{\SL}{SL}

\DeclareMathOperator{\Ind}{Ind}

\DeclareMathOperator{\Nm}{Nm}
\DeclareMathOperator{\Spec}{Spec\,}

\DeclareMathOperator{\Res}{Res}

\DeclareMathOperator{\Proj}{Proj\,}

\DeclareMathOperator{\pt}{pt}

\DeclareMathOperator{\Cat}{Cat}

\DeclareMathOperator{\Burn}{\mathrm{Burn}}


\renewcommand\a\alpha
\renewcommand\b\beta
\newcommand\g\gamma
\renewcommand\d\delta



\newcommand\cC{\mathcal{C}}
\newcommand\cD{\mathcal{D}}

\newcommand\cS{\mathcal{S}}





\DeclareMathOperator{\KU}{KU}

\newcommand{\bC}{\mathbf{C}}
\newcommand{\bQ}{\mathbf{Q}}

\newcommand{\PSL}{\mathrm{PSL}}

\newcommand{\ex}{\mathrm{ex}}

\numberwithin{equation}{subsection}
\numberwithin{nul}{subsection}
\setcounter{tocdepth}{1}

\begin{document}
\title{$G$-spectra of cyclic defect}
\author{Tony Feng, David Treumann, and Allen Yuan}

\maketitle

\begin{abstract}Brou\'{e}'s Abelian Defect Conjecture predicts interesting derived equivalences between derived categories of modular representations of finite groups. We investigate a generalization of Brou\'{e}'s Conjecture to ring spectrum coefficients and prove this generalization in the cyclic defect case, following an argument of Rouquier. 
\end{abstract}

\tableofcontents

\section{Introduction}\label{sec:pre-intro}

Representation theory behaves significantly differently depending on whether the ground field has characteristic $0$ or characteristic $p$. Since more general rings, like the $p$-adic integers $\bZ_p$, in some sense interpolate between characteristics $0$ and $p$, the study of representations on modules over such rings plays an important role in modular representation theory.

In this paper, we investigate representations over an even more general class of rings that arises in algebraic topology, called \emph{ring spectra}. Ordinary commutative rings are ring spectra, but ring spectra also include differential graded algebras, as well as non-algebraic objects such as (to name two of many) the one that represents complex $K$-theory and the one that represents the stable homotopy groups of spheres. In a way, enlarging the class of rings makes the ``interpolation between characteristics $0$ and $p$'' more finely resolved. There is a rich theory and we hope that the tools and concepts of that theory could be applied to make progress on representation-theoretic questions, even on classical problems that have nothing to do with algebraic topology. We review some of this theory, and some of its precedent applications to other areas, in \S\ref{intro:spectra}.

Conversely, some of those representation-theoretic questions suggest very natural questions in algebraic topology that have seemingly not been considered before. In this paper, we attend to one such question, Brou\'e's Abelian Defect Conjecture \cite{Br90}. It predicts that the derived categories of mod $p$ or $p$-adic representations of a finite group $G$ and of a normalizer subgroup $N_G(D)$ have direct factors in common when $D$ is (1) abelian of $p$-power order and (2) a defect group of one of the mod $p$ blocks of $G$. We review it in greater detail in \S\ref{subsec:romrt}--\ref{subsec:tcogm}.

For thirty years Brou\'e's Conjecture has been a guiding problem in modular representation theory. The purpose of this paper is to formulate the analogous problem with $p$-adic rings replaced by $p$-complete ring spectra: do the categories of $p$-complete $G$-spectra and of $p$-complete $N_G(D)$-spectra have direct factors in common? 

In \S\ref{subsec:impliesbroue} we show that the original Brou\'e conjecture is a consequence of our spectrum version. We hope that the original conjecture could be attacked by applying some of the tools and concepts of stable homotopy theory: the Segal Conjecture/Carlsson's Theorem, Tate fixed points/Frobenius, organizing by chromatic level, etc.; we discuss these further in  \S\ref{intro:spectra}, \S\ref{subsec:Knlocalalg}--\ref{subsec:Knlocalfun}, \S\ref{sec:strong-segal}. However, this hope is not realized in this paper. Before pursuing it very seriously, it is natural to first ask if there is any evidence that the spectrum version is true.

In \S\ref{sec:rouquier}, we prove our spectrum version when $D$ is cyclic. The result does not illuminate the original form of the Brou\'e conjecture, which has been known for a long time in the cyclic defect case. Different proofs have been given by Rickard and by Rouquier. Both arguments end in the construction of ``tilting complexes'' of $(G,N_G(D))$-bimodules that implement the derived equivalence. In general there are many obstructions to lifting a tilting complex to a $G \times N_G(D)^{\op}$-spectrum. We deduce the desired equivalence of categories by showing that Rouquier's tilting complex does lift.

\subsection*{Acknowledgements}
The authors would like to thank Robert Burklund and Jay Taylor for helpful conversations. The first author was supported by NSF grant DMS 1902927. The third author was supported in part by NSF grant DMS-2002029.

\section{Brou\'{e}'s Conjecture over ring spectra}\label{sec:intro}

\subsection{Review of modular representation theory}
\label{subsec:romrt}
This is a paper about finite group actions on $p$-complete spectra, but we will start this extended introduction by reviewing an example and some of the theory of finite group actions on $p$-adic abelian groups. Three basic questions in modular representation theory are: 
\begin{enumerate}
\item What are the simple representations over a $p$-adic field of characteristic $0$? 
\item What are the simple representations over a finite field of characteristic $p$?
\item What are the projective representations in characteristic $p$, or over a $p$-adic ring of integers?
\end{enumerate}
The answer to these questions, and some information about blocks, defects, and Brauer trees, are given below in the case $G = \PSL_2(\bF_7)$ and $p = 7$. We will use this example to illustrate some of the concepts of modular representation theory. 

Conventionally, one chooses the coefficient rings and fields to have sufficiently many roots of unity. We wish to avoid adding $p$th roots of unity (or $4$th roots when $p = 2$) to our coefficients for algebraic topology reasons we'll come to later \S\ref{intro:spectra}. We describe the representation theory of $\PSL_2(\bF_7)$ over $\bZ_7$ and $\bQ_7$.

\subsubsection{Irreducible $\bQ_7[\PSL_2(\bF_7)]$-modules}
Let $G = \PSL_2(\bF_7)$ be the simple group of order 168 and let $p = 7$. There are 5 irreducible representations of $G$ on $\bQ_7$-vector spaces that we will call 
\[
\mathbf{1}, \mathbf{7},\mathbf{8},\mathbf{6}\text{ and }\mathbf{3}\overline{\mathbf{3}} \qquad \text{of dimensions $1,7,8,6,$ and $6$ respectively.}
\]
The trivial representation is $\mathbf{1}$; the other representations have names coming from the theory of $\SL_2$: $\mathbf{7}$ is the Steinberg representation, $\mathbf{8}$ is in the principal series, the two $6$-dimensional representations are in the discrete series.

The representations $\mathbf{1},\mathbf{7},\mathbf{8}$, and $\mathbf{6}$ are all absolutely irreducible but if we adjoin $\sqrt{-7}$ or $\sqrt[7]{1}$ to $\bQ_7$, the representation $\mathbf{3}\overline{\mathbf{3}}$ splits as a sum of two $3$-dimensional representations.

\subsubsection{Simple and indecomposable projective $\bZ_7[\PSL_2(\bF_7)]$-modules}
The theory of projective covers gives a bijection between the simple and the indecomposable projective $\bZ_p[G]$-modules. Over $\bZ_7$, there are four indecomposable projective $\PSL_2(\bF_7)$-representations: a lattice in $\mathbf{7}$, a lattice in $\mathbf{1} \oplus \mathbf{6}$, a lattice in $\mathbf{6} \oplus \mathbf{8}$, and a lattice in $\mathbf{8} \oplus \mathbf{3}\overline{\mathbf{3}}$.

The simple quotients of these modules are $\bF_7$-vector spaces of dimensions $7$, $1$, $5$, and $3$, respectively; they don't play a prominent role in this section, and in general simple modules don't play a prominent role in this paper.

\subsubsection{Blocks}
If $G$ is any finite group and $p$ is any prime, then the group algebra $\bZ_p[G]$ is a product of associative rings called \emph{block algebras}:
\begin{equation}
\label{eq:timestimes}
\bZ_p[G] = \bZ_p[G]b_1 \times \bZ_p[G]b_2 \times \cdots
\end{equation}
The way direct products of rings work, every $\bZ_p[G]$-module $M$ is canonically a direct sum of modules for the block algebras
\[
M = Mb_1 \oplus Mb_2 \oplus \cdots
\]
The notation $Mb_i$ means $M$ is a module for $\bZ_p[G]b_i$. We say that ``$M$ belongs to the block $b_i$'' if $M = Mb_i$.

Any indecomposable $\bZ_p[G]$-module belongs to a single block. All the $\bZ_p[G]$-lattices in an irreducible $\bQ_p[G]$-module belong to the same block. Thus, the blocks partition the set of irreducible $\bQ_p[G]$-modules. One way to name or classify the blocks of $G$ is by describing this partition. For example, there are two blocks of $\bZ_7[\PSL_2(\bF_7)]$ and they are labeled by 
\begin{equation}
\label{eq:18633-7}
\{\mathbf{1},\mathbf{8},\mathbf{6},\mathbf{3}\overline{\mathbf{3}}\} \text{ and } \{\mathbf{7}\}
\end{equation}
In general the block containing the trivial representation is called the \emph{principal block}. When $G = \PSL_2(\bF_7)$, we can call the other block the \emph{Steinberg block}, since the only finitely generated indecomposable module that belongs to it is a lattice in the Steinberg representation $\mathbf{7}$.

\subsubsection{Defect groups and a Brauer tree} ``Defects'' are invariants of blocks.

A block $b$ has \emph{defect zero} if there's only one irreducible $\bQ_p[G]$-module that belongs to $b$ and every lattice in that irreducible module is projective. For instance, the Steinberg block has defect zero. A block of defect zero is categorically and homologically boring --- its category of modules is the same as the category of modules over $\bZ_p$, or of a division ring over $\bZ_p$. 

Groups whose $p$-Sylow is of order $p$ furnish the basic examples of blocks of \emph{defect one}. The principal block of such a group has defect one. For instance the principal block of $\PSL_2(\bF_7)$ (labeled $\{\mathbf{1},\mathbf{8},\mathbf{6},\mathbf{3}\overline{\mathbf{3}}\}$ in \eqref{eq:18633-7}) is a block of defect one.

Brauer observed a ``tree'' structure in the decomposition matrix of a block of defect one. The decomposition matrix records how the indecomposable projective $\bZ_p[G]$-modules break up into irreducible $\bQ_p[G]$-modules. Brauer showed that, sticking to modules in a block of defect one, this matrix is the incidence matrix of a tree. 

For example, the Brauer tree of the principal block of $\PSL_2(\bF_7)$ is
\begin{equation}
\label{eq:treeG}
\xymatrix{
\mathbf{1} \ar@{-}[r] & \mathbf{6} \ar@{-}[r] & \mathbf{8} \ar@{-}[r] & \mathbf{3} \overline{\mathbf{3}}
}
\end{equation}
Its vertices are labeled by the irreducible $\bQ_7[\PSL_2(\bF_7)]$-modules in the principal block. It is natural to label its edges by the indecomposable projective $\bZ_7[\PSL_2(\bF_7)]$-modules: if $P \otimes \bQ_7$ splits up as $M \oplus N$, draw an edge between $M$ and $N$.
\medskip

In general the ``defect'' of a block is more detailed information than a number $n$; it is a conjugacy class of subgroups of $G$ of order $p^n$. A simple way to characterize the defect group is as follows: the \emph{defect group} of $b$ is the smallest $p$-subgroup $Q \subset G$ such that every module $M$ belonging to $b$ is a direct summand of $\Ind_Q^G(\text{a $\bZ_p[Q]$-module})$. 

\subsubsection{Coefficients}\label{sssec: coeffs}
In our running example of $\PSL_2(\bF_7)$, Brou\'e's conjecture (to be introduced below) is easy to check over $\bZ_7$. But in general we will need to consider a larger coefficient system.

Let $\cO$ be a finite extension of $\bZ_p$ whose fraction field $K$ has characteristic $0$ and whose residue field $k$ has characteristic $p$. Sometimes modular representation theorists call the triple $(K,\cO,k)$ a ``$p$-modular system.'' We will take a triple of the following form: 
\begin{itemize}
\item $k = \F_q$ contains a primitive $n$th root of unity whenever $G$ has an element of order $n$ which is prime to $p$. 
\item $\cO = \Z_q$ is the ring of Witt vectors of $k$. (This is the minimal extension of $\Z_p$ with residue field $\F_q$.) 
\item $K = \Q_q$ is the fraction field of $\cO$. 
\end{itemize}
\begin{rmk}
\label{rem:assez-gros}
The reduction map $\cO[G] \to k[G]$ always induces a bijection on blocks. The hypothesis on $\bF_q$ has the following additional consequence for blocks: that the map $\bF_q[G] \to k'[G]$ induces a bijection on blocks for any extension field $k'/\bF_q$, and the map $\bZ_q[G] \to \cO'[G]$ induces a bijection on blocks for any extension $\cO'$ of $\bZ_q$.
\end{rmk}

\begin{rmk}
A standard additional hypothesis on $p$-modular systems is that $\cO$ and $K$ contain $\exp(G)$th roots of unity, where $\exp(G)$ is the exponent of $G$ --- sometimes $(K,\cO,k)$ is called a ``splitting $p$-modular system'' when this hypothesis holds. We warn that $(\bQ_q,\bZ_q,\bF_q)$ is usually \emph{not} a splitting $p$-modular system in this sense, for example not if $p$ is odd and divides the order of $|G|$, or if $p = 2$ and the $2$-Sylow subgroup of $G$ is not elementary abelian. Our reasons for avoiding $p$th roots of unity are explained in \S\ref{intro:spectra}.

Brou\'e's conjecture, introduced below, concerns blocks of $\cO[G]$ and $k[G]$ where $(K,\cO,k)$ is a ``sufficiently large'' $p$-modular system. Usually, ``sufficiently large'' at least implicitly means ``splitting'' but we will work with $(K,\cO,k) = (\bQ_q,\bZ_q,\bF_q)$, where $\bF_q$ obeys the condition above. Though not splitting, the conjecture for $(\bQ_q,\bZ_q,\bF_q)$ implies the conjecture for any larger $(K,\cO,k)$, and conversely Rickard's refined form of the Brou\'e conjecture for $\bF_q$ \cite{Rickard-splendid} implies it for $\bZ_q$ \cite[\S 5]{Rickard-splendid}.\end{rmk}

\subsection{Brou\'e's conjecture}
\label{intro:6}

Write $D^b(\Z_q[G]b)^{\mrm{fg}}$ for the bounded derived category of finitely generated $\Z_q[G]b$-modules. Whenever the defect group $D$ of $b$ is abelian, Brou\'e's conjecture predicts an equivalence of derived categories
\begin{equation}
\label{eq:broue-db}
D^b(\Z_q[G]b)^{\mrm{fg}} \cong D^b(\Z_q[N_G(D)]b')^{\mrm{fg}}
\end{equation}
for some block $b'$ of the normalizer $N_G(D)$ of $D$. Given $b$, there is an explicit recipe to determine $b'$, which is called the ``Brauer correspondent'' of $b$. In particular, if $b$ is the principal block of $\Z_q[G]$ then $b'$ is the principal block of $\Z_q[N_G(D)]$.

The conjecture has been known for a long time for blocks of cyclic defect --- proved first by Rickard \cite{rickard} and later by Rouquier \cite{rouquier-cyclic}. In case $G = \PSL_2(\bF_7)$, the ring $\Z_q$ is $\bZ_7$ itself; the example is pretty typical and Rouquier's equivalence can be described quickly, as follows. 

The normalizer of the defect group of the principal block of $G = \PSL_2(\bF_7)$ is the ``Borel'' subgroup $B \subset G$, of order $21$. The group algebra $\bZ_7[B]$ has only one block. This block has defect one and its Brauer tree is
\begin{equation}
\label{eq:treeB}
\begin{gathered}
\xymatrix{
& & \mathbf{1}'_B \\
\mathbf{1}_B \ar@{-}[r] & \mathbf{3}\overline{\mathbf{3}}_B \ar@{-}[rd] \ar@{-}[ru]\\
& & \mathbf{1}''_B
}
\end{gathered}
\end{equation}
The central vertex is another module that would split over a field with $\sqrt{-7}$, decorated with a subscript $B$ to distinguish it from the irreducible $\bQ_7[G]$-module with the same property and similar name. The other vertices are $1$-dimensional modules, two of them nontrivial. The Rouquier equivalence sends a $G$-module $M$ in the principal block to the two-term complex
\begin{equation}
\label{eq:2-term-complex}
\Res^G_B(M) \to Q \otimes \Hom_G(P,M)
\end{equation}
In the formula \eqref{eq:2-term-complex}, $P$ and $Q$ are indecomposable projectives, which we can specify by indicating their corresponding edges in the Brauer trees:
\[
\xymatrix{
\mathbf{1} \ar@{-}[r] & \mathbf{6} \ar@{-}[r]^P & \mathbf{8} \ar@{-}[r] & \mathbf{3} \overline{\mathbf{3}}}
\qquad 
\xymatrix{
& & \mathbf{1}' \\
\mathbf{1}_B \ar@{-}[r] & \mathbf{3}\overline{\mathbf{3}}_B \ar@{-}[rd] \ar@{-}[ru]^Q\\
& & \mathbf{1}''
}
\]
The differential in \eqref{eq:2-term-complex} is a natural transformation $\Res^G_B(-) \to Q \otimes \Hom_G(P,-)$. A Yoneda/Morita argument identifies the set of such natural transformations with a ball in a $p$-adic vector space, specifically with $\Hom_B(\Res^G_B P,Q) \cong \bZ_7^5$. A little care is necessary in choosing the differential in this ball: the corresponding homomorphism $\Res^G_B P \to Q$ must be surjective. This condition is both closed and open in the usual $7$-adic metric on $\bZ_7^5$.

\subsection{Triangulated categories of $G$-modules}
\label{subsec:tcogm}
The block decomposition \eqref{eq:timestimes} of $\Z_q[G]$ induces a decomposition of the derived category $D^b(\Z_q[G])^{\mathrm{fg}}$ as a direct product of triangulated categories
\[
D^b(\Z_q[G])^{\mrm{fg}} \cong D^b(\Z_q[G]b_1)^{\mrm{fg}} \times D^b(\Z_q[G]b_2)^{\mrm{fg}} \times \cdots
\]
Each of these categories has a natural dg enrichment. In other words, each of them is the homotopy category of a $\Z_q$-linear stable $\infty$-category. Our notation for these $\infty$-categories is not the usual one in representation theory: we write $\LModft{\Z_q[G]b}$ for the $\Z_q$-linear stable $\infty$-category whose homotopy category is $D^b(\Z_q[G]b)^{\mrm{fg}}$. Let us make some comments about this notation, which is developed in \S\ref{subsec:finite-k}--\ref{subsec:fin-left-mod}.

\begin{itemize}
\item $\LMod$ stands for ``left module spectra'' --- $\LMod(R)$ is the $\infty$-category of left module spectra over the ring spectrum $R$. Every ring in the usual sense --- i.e., every ``discrete ring'' --- determines a ring spectrum, namely its \emph{Eilenberg-MacLane spectrum}. In this paper we abuse notation and use the same symbol for a discrete ring as for its Eilenberg-MacLane spectrum. Thus, $\LMod(\Z_q[G]b)$ is the $\infty$-category of left module spectra over the Eilenberg-MacLane spectrum of $\Z_q[G]b$. 
\item The homotopy category of $\LMod(\Z_q[G]b)$ is equivalent not to $D^b(\bZ_q[G]b)^{\mathrm{fg}}$ but to $D(\Z_q[G]b)$, the unbounded derived category of $\bZ_q[G]b$. Its objects are complexes that are not required to be bounded, or to obey any other finiteness condition.

\item The superscript in $\LModft{\bZ_q[G]b}$ stands for ``finite type.'' The notation is based on the following characterization of $D^b(\Z_q[G]b)^{\mrm{fg}}$ as a subcategory of $D(\Z_q[G]b)$: it is the full subcategory spanned by those complexes whose underlying complex of $\Z_q$-modules is bounded and finitely generated in each degree.

\end{itemize}

\subsection{Spectra}\label{intro:spectra}

Extraordinary cohomology theories (cobordism, $K$-theory, \dots) are represented by spectra.   The category of spectra can be be viewed as a homotopy theoretic refinement of the derived category of abelian groups.  For instance, it is triangulated, and the category of abelian groups embeds inside spectra by the Eilenberg-MacLane spectrum construction $A \mapsto H^*(-;A)$.  Any spectrum $X$ has homotopy groups, which are the graded abelian group
\[
\pi_* X = H^{-*}(\mathrm{pt}; X).
\]
Moreover, the category of spectra has a symmetric monoidal structure, and so one can consider ring spectra and commutative ring spectra, analogously to DGA's and CDGA's in $D(\Z)$. 

In this ``higher algebra'' of ring spectra, the initial ring spectrum is $\bS$, the sphere spectrum, whose homotopy groups are the stable homotopy groups of spheres.  The $\infty$-category of spectra is equivalent $\LMod(\bS)$ --- for a commutative ring spectrum such as $\bS$, we usually drop the $\mathrm{L}$ and write $\Mod(\bS) := \LMod(\bS)$.  
In the body of the paper we will assume a greater familiarity with spectra and with $\infty$-categories; here in the introduction we will make some basic comments:
\begin{enumerate}
\item The map $g: \bS \to \bZ$ induces an isomorphism in homotopy groups in non-positive degrees, and the positive degree homotopy groups of $\bS$ are the well-studied \emph{stable homotopy groups of spheres}. These groups form a graded commutative ring under composition, elements in positive degree are known to be torsion \cite{serre} and nilpotent \cite{nishida}. 

\item For each prime $p$ there is a natural $p$-completion of $\bS$, another commutative ring spectrum denoted $\bS_p$. Its homotopy groups are the homotopy groups of $\bS$, tensored with $\bZ_p$.
\item When $n$ is prime to $p$, there is a natural construction that ``adjoins $n$th roots of unity to $\bS_p$'', e.g. \cite[Ex. 5.2.7]{elliptic-cohomology}. We will denote the result by $\bS_q$, where $q$ is the cardinality of the field obtained by adjoining $n$th roots of unity to $\bF_p$. Its homotopy groups are the homotopy groups of $\bS$, tensored with $\bZ_p(\sqrt[n]{1})$.  On the other hand, when $p$ is odd, it is not possible to adjoin $p$th roots of unity to $\bS_p$  \cite[\S A.6.iii]{lawson}, and for $p=2$, it is not possible to adjoin $4$th roots of unity to $\bS_2$ \cite{SVW}.
\end{enumerate}

\subsubsection{Relation to classical algebra}
Regarding $\bZ$ as a ring spectrum, $\Mod(\bZ)$ is a stable $\infty$-category whose homotopy category is $D(\bZ)$. Algebra over $\bS$ and over $\bZ$ is related by the unique homotopy class of ring spectrum maps $g: \bS \to \bZ$ and the induced extension and restriction of scalar functors 
\[
g^*: \Mod(\bS) \to \Mod(\bZ) \qquad g_*:\Mod(\bZ) \to \Mod(\bS)
\]
The map $g: \bS \to \bZ$ can be thought of as a nilpotent thickening: it is an isomorphism in non-positive degrees, and its kernel on homotopy groups consists of nilpotent torsion elements.  But these extra elements in $\bS$ yield some striking consequences:
\begin{enumerate}
\item Commutative algebras over $\bS_p$ admit a natural Frobenius \cite[\S IV.1]{NS}.
\item There is a natural \emph{chromatic} filtration on $\bS_p$:
\[
\bS_p = \varprojlim_n L_n \bS_p
\]
which interpolates between mixed characteristic phenomena over $\bS_p$ and characteristic zero phenomena over $L_0 \bS_p = \Q_p$.  The above Frobenius ``moves filtration by one'' in an appropriate sense.
\end{enumerate}
These features have seen recent application in the study of mixed characteristic phenomena.  For instance, the Frobenius of (1) underlies Bhatt--Morrow--Scholze's work on $p$-adic Hodge theory \cite{BMS2}, and (2) has led to the construction of some new quantum groups by Yang--Zhao \cite{YangZhao}, realizing earlier character formulas of Lusztig \cite{Lusztig87, Lusztig15}.  We hope that studying analogues of Brou\'{e}'s conjecture over $\bS_p$ may shed light on the original form of the conjecture.

\subsection{$G$-spectra} 
\label{intro:G-spectra}
Morally, a $G$-spectrum is a spectrum with an action of the group $G$. $G$-spectra ought to represent $G$-equivariant cohomology theories. Algebraic topologists have a few inequivalent ways of modeling them. In this paper we deal with ``Borel equivariant'' $G$-spectra, which in $\infty$-categorical language have a simple definition: a Borel $G$-spectrum is a functor to $\Mod(\bS)$ from the classifying space $BG$ of $G$. The $\infty$-category of Borel $G$-spectra is $\Fun(BG,\Mod(\bS))$.

The $\infty$-category of Borel $G$-spectra is an $\infty$-category of module spectra:
\[
\Fun(BG,\Mod(\bS)) \cong \LMod(\bS[G])
\]
Here, $\bS[G]$ is the ``group algebra of $G$ over $\bS$'' whose underlying spectrum is $\bigoplus_{g \in G} \bS$. We similarly have the variants
\begin{align}
\label{eq:spg}
\Fun(BG,\Mod(\bS_p)) &\cong \LMod(\bS_p[G])\\
\Fun(BG,\Mod(\bS_q)) &\cong \LMod(\bS_q[G])
\end{align}
where we first $p$-complete and then adjoint appropriate roots.  The group algebra $\bS_q[G]$, like $\bZ_q[G]$, splits into block algebras:
\[
\bS_q[G] = \bS_q[G]b_1 \times \bS_q[G]b_2 \times \cdots
\]
The blocks of $\bS_q[G]$ naturally correspond to those of $\bZ_q[G]$, since $\pi_0(\bS_q[G]b) \cong \bZ_q[G]b$. The category $\LMod(\bS_q[G])$ --- and $\LModft{\bS_q[G]}$, see below --- splits up in the same way:
\begin{equation}\label{eqn:blocksplit}
{\arraycolsep=1pt
\begin{array}{lclllll}
\LMod(\bS_q[G]) &\cong & \LMod(\bS_q[G]b_1) & \times & \LMod(\bS_q[G]b_2) & \times & \cdots \\
\LModft{\bS_q[G]} &\cong & \LModft{\bS_q[G]b_1} & \times & \LModft{\bS_q[G]b_2} & \times & \cdots
\end{array}}
\end{equation}

Let's discuss $\LModft{\bS_q[G]}$, which we propose as a spectral analog of $D^b(\bZ_q[G])^{\mrm{fg}}$. Let $\Mod(\bS_q)^{\omega}$ denote the full subcategory of $\Mod(\bS_q)$ spanned by compact objects \S\ref{subsec:finite-k}. Then $\LModft{\bS_q[G]}$ is the full subcategory of $\LMod(\bS_q[G])$ spanned by modules whose underlying $\bS_q$-module is compact. Similar to \eqref{eq:spg} we have 
\[
\Fun(BG,\Mod(\bS_q)^{\omega}) \cong \LModft{\bS_q[G]}
\]
In a way $\LModft{\bS_q[G]}$ is a difficult category to work with, because it is not known whether it has a finite set of generators. (An alternative, $\LMod(\bS_q[G])^{\omega}$, is by definition generated by $\bS_q[G]$). Nevertheless we are able to prove some cases (the cyclic defect case) of the obvious $\LMod^{\mathrm{ft}}$-analog of Brou\'e's conjecture:

\subsection{Brou\'{e}'s Conjecture for $G$-spectra}
\label{intro:maintheorem}

Since $\bS_q[G]$ and $\bZ_q[G]$ have the same blocks, each block of defect $D$ of $\bS_q[G]$ has a Brauer corresponding block of $\bS_q[N_G(D)]$: if $\bZ_q[N_G(D)]b'$ is the Brauer correspondent of $\bZ_q[G]b$, then $\bS_q[N_G(D)]b'$ is the Brauer correspondent of $\bS_q[G]b$. The natural analog of Brou\'e's conjecture for $G$-spectra would be a positive answer to the following question:

\begin{qst*}
Suppose $b$ is a block of $\bS_q[G]$ with abelian defect group $D \subset G$, and that $b'$ is the corresponding block of $\bS_q[N_G(D)]$. Then there is an equivalence of stable $\infty$-categories
\[
\LMod(\bS_q[G]b) \cong \LMod(\bS_q[N_G(D)]b')
\]
which restricts to an equivalence between the full subcategories $\LModft{\bS_q[G]b}$ and $\LModft{\bS_q[N_G(D)]b'}$. (This last statement about finite type subcategories is in fact automatic, by Proposition \ref{prop: finiteness}.)
\end{qst*}

We are not quite bold enough to state this as a conjecture, but we prove the following as Theorem \ref{thm:last}, answering the Question in the case where the defect group is \emph{cyclic}. 

\begin{thm*}
Suppose $b$ is a block of $\bS_q[G]$ with defect $D \subset G$, and that $b'$ is the corresponding block of $\bS_q[N_G(D)]$. If $D$ is cyclic, then there is an equivalence of stable $\infty$-categories
\[
\LMod(\bS_q[G]b) \cong \LMod(\bS_q[N_G(D)]b')
\]
which restricts to an equivalence between the full subcategories $\LModft{\bS_q[G]b}$ and $\LModft{\bS_q[N_G(D)]b'}$.
\end{thm*}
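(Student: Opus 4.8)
The plan is to transport Rouquier's two-sided tilting complex from $\bZ_q$ to $\bS_q$ and to check that the transported complex still implements a Morita equivalence of module $\infty$-categories. Write $N=N_G(D)$ and $\Gamma=N\times G^{\op}$, and let $g\colon\bS_q\to\bZ_q$ be the $0$-truncation, so that $\pi_0\bS_q[\Gamma]=\bZ_q[\Gamma]$. First recall the shape of Rouquier's argument over $\bZ_q$, available because $D$ is cyclic: there is a bounded complex $X$ of $(\bZ_q[N]b',\bZ_q[G]b)$-bimodules, perfect as a module over each of its two sides, whose terms are $p$-permutation (trivial source) bimodules with vertex contained in the diagonal copy of $D$ --- in the simplest case the two-term complex assembled from a block summand of the restriction bimodule $\bZ_q[G]$ and a projective--projective bimodule $Q\otimes_{\bZ_q}P^{\vee}$, as in \eqref{eq:2-term-complex} --- such that $X\otimes^{\mathbb L}_{\bZ_q[G]b}(-)$ is an equivalence $D(\bZ_q[G]b)\xrightarrow{\ \sim\ }D(\bZ_q[N]b')$; equivalently, the associated unit $\bZ_q[G]b\to\End_{\bZ_q[N]b'}(X)$ and counit $X\otimes^{\mathbb L}_{\bZ_q[G]b}X^{\vee}\to\bZ_q[N]b'$ are equivalences.

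Next I would lift $X$ to $\bS_q$ one piece at a time. Block idempotents and finitely generated projective modules lift uniquely along $g$ (idempotent lifting over connective ring spectra), so each term $X^i$, being a retract of a permutation bimodule $\bZ_q[\Gamma/H]$, lifts to a retract $\tilX^i$ of $\bS_q[\Gamma/H]$ with $g^*\tilX^i\simeq X^i$. The substantive point is lifting the differentials. In the two-term model $[\,X^0\xrightarrow{\,d\,}X^1\,]$, with $X^0$ a retract of $\bS_q[\Gamma/H]$ ($H\cong N$) and $X^1$ projective--projective, the mapping spectrum $\Map_{\bS_q[\Gamma]}(\tilX^0,\tilX^1)$ is a retract of the homotopy fixed points $(\Res_H\tilX^1)^{hH}$; and because $\tilX^1$ is projective over one of the two outer group algebras, an untwisting isomorphism identifies $\Res_H\tilX^1$ with a projective $\bS_q[N]$-module. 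The homotopy-fixed-point spectral sequence therefore collapses, $(\Res_H\tilX^1)^{hH}$ is connective, and $\pi_0\Map_{\bS_q[\Gamma]}(\tilX^0,\tilX^1)\cong\Hom_{\bZ_q[\Gamma]}(X^0,X^1)$; hence $d$ lifts to $\tild\colon\tilX^0\to\tilX^1$ and $\tilX:=\Fib(\tild)$ satisfies $g^*\tilX\simeq X$. For Rouquier's complex in general cyclic defect one iterates this along the filtration by terms, the obstructions to lifting each successive differential and to the higher coherences ($\tild\circ\tild\simeq 0$, etc.) lying in negative homotopy of mapping spectra that are connective by the same projectivity--untwisting mechanism.

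It remains to recognize $\tilX$ as a two-sided tilting complex over $\bS_q$. Exactly as over $\bZ_q$, $\tilX$ is perfect over $\bS_q[G]b$ and over $\bS_q[N]b'$, since each $\tilX^i$ restricts to a finitely generated projective module over each of those rings (restriction is exact, and only homotopy fixed points --- not restriction --- see the higher homotopy of $\bS_q[H]$). Form the unit $u\colon\bS_q[G]b\to\End_{\bS_q[N]b'}(\tilX)$ and the counit $c\colon\tilX\otimes_{\bS_q[G]b}\tilX^{\vee}\to\bS_q[N]b'$; their sources and targets are bounded-below $\bS_q$-modules, and by naturality $g^*u$ and $g^*c$ are Rouquier's unit and counit, hence equivalences. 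But $g^*=\bZ_q\otimes_{\bS_q}(-)$ is conservative on bounded-below modules: if $g^*W\simeq 0$ and $n$ is the bottom nonzero homotopy degree of $W$, then $\pi_n(g^*W)\cong\pi_nW\ne 0$, a contradiction. So $u$ and $c$ are equivalences, and by Schwede--Shipley Morita theory $\tilX\otimes_{\bS_q[G]b}(-)$ is an equivalence $\LMod(\bS_q[G]b)\xrightarrow{\ \sim\ }\LMod(\bS_q[N]b')$; the restriction to the finite-type subcategories is then automatic by Proposition~\ref{prop: finiteness}.

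The hardest step is the lifting of the differentials. A priori the obstructions sit in negative homotopy groups of mapping spectra between $p$-permutation bimodule spectra, i.e.\ in $p$-complete homotopy fixed points over $p$-groups, which are genuinely nonzero in general --- this is precisely the Segal-conjecture phenomenon flagged in the introduction, and it is why the non-cyclic case is not reachable by this argument. The cyclic case works only because Rouquier's complex has the very special shape that forces the relevant restrictions along diagonal subgroups to be projective, collapsing the homotopy-fixed-point spectral sequences and annihilating the obstructions. Carrying this out for his complex in full cyclic-defect generality --- rather than in the $|D|=p$ model spelled out above --- is where the real work lies.
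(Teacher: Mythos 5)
Your overall strategy---lift Rouquier's two-term tilting complex from $\bZ_q$ to $\bS_q$ term by term, then use conservativity of $\bZ_q \otimes_{\bS_q}(-)$ on bounded-below modules to verify the unit and counit are equivalences---is the same as the paper's, and your conservativity step at the end is essentially identical to Proposition~\ref{prop:conservative}. However, there are two genuine gaps in the middle of the argument.

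\textbf{Lifting the permutation term is not ``idempotent lifting over connective ring spectra.''} You assert that each term $X^i$, being a retract of $\bZ_q[\Gamma/H]$, lifts to a retract of $\bS_q[\Gamma/H]$ by standard idempotent lifting. For the \emph{projective} term this is fine, because $\pi_0\End_{\bS_q[\Gamma]}(\bS_q[\Gamma]) = \bZ_q[\Gamma]$ already equals the target. But for the \emph{permutation} term $N_0$ (the summand of $\bZ_q[G]$ with its $(G,N)$-bimodule structure), the map
\[
\pi_0\End_{\bS_q[\Gamma]}\bigl(\bS_q[\Gamma/H]\bigr) \longrightarrow \End_{\bZ_q[\Gamma]}\bigl(\bZ_q[\Gamma/H]\bigr)
\]
is \emph{not} an isomorphism: by Carlsson's theorem the source is a completed Burnside ring $\Burn_\Gamma((\Gamma/H)^2)^{\wedge}_{(I,p)}\otimes\bZ_q$, which surjects onto the target but with a nontrivial kernel. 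Lifting the primitive idempotent cutting out $N_0$ along this surjection is the real technical core of the paper (Theorem~\ref{thm: Carlsson}, Lemma~\ref{lem:fgZp}, Propositions~\ref{prop:Sp-permutation} and~\ref{prop:Sp-permutation2}, plus the Krull--Schmidt/conjugacy lemmas behind them), and your proof does not supply it. Ironically, you flag the Segal conjecture in your last paragraph as a potential obstruction to lifting the \emph{differential}, but the place it actually enters is in lifting the \emph{term}.

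\textbf{The differential is lifted more cleanly by the other orientation.} You model Rouquier's complex as $[X^0 \to X^1]$ with the permutation term as source and the projective as target, and then reach for a homotopy-fixed-point spectral sequence and an ``untwisting'' to argue connectivity of $\Map_{\bS_q[\Gamma]}(\tilX^0, \tilX^1)$. This is unnecessarily delicate (connectivity alone does not give surjectivity of $\pi_0$ onto $\Hom_{\bZ_q[\Gamma]}(X^0,X^1)$, which is what you actually need), and it can be bypassed: the bimodule of Theorem~\ref{thm:rouquier} is $[N_0' \to N_0]$ with the \emph{projective} $N_0'$ as the source, and then Proposition~\ref{prop:PM} gives $[N', N]\cong\Hom_{\pi_*}(\pi_* N',\pi_* N)$ on the nose, so the differential lifts (uniquely) with no spectral sequence at all. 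Relatedly, you worry about iterating along ``a filtration by terms'' and higher coherences for longer complexes; Rouquier's complex is always two-term in the cyclic defect case, so there is a single differential and no higher coherence data to produce.
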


\subsection{Rickard vs. Rouquier}
\label{subsec:rvr}
There are two old proofs of Brou\'e's conjecture for blocks of cyclic defect, one by Rickard \cite{rickard} and one by Rouquier \cite{rouquier-cyclic}. Our proof is an adaptation of Rouquier's argument.

{\it Rickard's proof.} The Brauer tree of a block of cyclic defect has a little bit of extra structure: a ribbon structure and a distinguished vertex labeled by an integer called its ``multiplicity.'' In the case of $\PSL_2(\bF_7)$ \eqref{eq:treeG} or its Borel subgroup \eqref{eq:treeB}, the ribbon structure is the embedding in the page, the distinguished vertex is $\mathbf{3}\overline{\mathbf{3}}$, and the multiplicity is $2$.

Very few trees come from blocks. But given any tree with these decorations, Rickard defined by generators and relations an associative algebra that is Morita equivalent to a block when the tree is the Brauer tree of that block. Then, he proved that any two of these tree algebras have the same derived category, as long as the trees have the same number of edges and the integer called ``multiplicity'' is the same. An elementary argument shows that these numbers match for Brauer corresponding blocks of $G$ and $N_G(D)$.

In higher algebra, it takes more work than ``generators and relations'' to define an associative ring spectrum. For this reason we have not generalized Rickard's proof, but it might be possible and interesting to do so.

{\it Rouquier's proof.} Morita theory tells us that functors between module categories can be given by bimodules. When $D$ is cyclic, Rouquier found a very explicit $(G,N_G(D))$-bimodule that gives Brou\'e's equivalence.

This bimodule and its inverse bimodule have a simple structure: they are two-term complexes of summands of permutation $G \times N_G(D)^{\op}$-modules (the inverse is a two-term complex of summands of permutation $N_G(D) \times G^{\op}$-modules), one term of which is projective. Because of this simple structure, it is easy to find $\bS_p$-versions of these bimodules.

More generally, Brou\'e's conjecture is known for many noncyclic defect groups, usually by constructing bimodules. But at present we do not know how to lift these known bimodules to $\bS_p$; it seems to be a difficult problem.

\section{Associative algebras, modules, and bimodules in spectra} \label{sec:two}

In this section, we review some basic Morita theory over the spectrum analog of a finite-dimensional algebra.  The relevant material is developed in detail in \cite[Ch. 4]{HA}, which we draw heavily from and we refer the reader to for further details.  
In \S\ref{subsec:not}, we review some basic notations from the theory of $\infty$-categories.  
In \S\ref{subsec:finite-k}--\S\ref{subsec:right-adjoint}, we discuss the theory of bimodules and its interactions with two finiteness conditions: ``compact'' modules and ``finite type'' modules.
In \S\ref{subsec:change-of-rings} we prove the main theorem of this section, which will be used in \S\ref{sec:rouquier} to lift certain $\bZ$-linear equivalences of $\infty$-categories to $\bS$-linear equivalences.  

In \S\ref{subsec:Knlocalalg}--\S\ref{subsec:Knlocalfun}, we discuss $K(n)$-local algebras. When $A$ and $B$ are group algebra over a $K(n)$-local algebra $k$, the theory of \emph{ambidexterity} provides a larger class a functors $\LModft{A} \to \LModft{B}$. This material is not used in our proof of \S\ref{intro:maintheorem}.

\subsection{$\infty$-categorical notation}
\label{subsec:not}
We write $\cS$ for the $\infty$-category of spaces, $\bS$ for the sphere spectrum, and $\Mod(\bS)$ for the $\infty$-category of spectra. If $x$ and $y$ are objects of the $\infty$-category $\cC$, we write $\Maps(x,y) \in \cS$ for the space of maps between them, and $[x,y]$ for $\pi_0 \Maps(x,y)$. If $\cC$ is stable, then we write $\underline{\Maps}(x,y)$ for the corresponding mapping spectrum, whose associated infinite loop space is $\Maps(x,y)$. We write $\Sigma$ for the suspension functor in a stable $\infty$-category.

We write $\Fun(\cC,\cD)$ for the $\infty$-category of functors between $\infty$-categories $\cC$ and $\cD$. The full subcategory spanned by functors that have right adjoints is $\Fun^L(\cC,\cD)$. If $\cC$ and $\cD$ are stable, the full subcategory spanned by functors that preserve finite limits and colimits is $\Fun^{\ex}(\cC,\cD)$.

We write $BG$ for the classifying space of a finite group $G$, which we regard as equipped with a natural basepoint $\pt \to BG$. If $G$ is a finite group and $c$ is an object of the $\infty$-category $\cC$, then an action of $G$ on $c$ is by definition a functor $BG \to \cC$ whose value at the basepoint is $c$. We write
\[
c^{hG} := \varprojlim_{BG} c \qquad c_{hG} := \varinjlim_{BG} c
\]
for the homotopy fixed points and homotopy quotient of a $G$-object $c$, when these limits and colimits exist in $\cC$.

If $\cC =\Mod(\bS)$, there is a more sophisticated notion of a $G$-object in $\cC$: the notion of a ``genuine'' $G$-spectrum. This paper mostly does not touch the genuine theory except in \S\ref{sec:strong-segal}.

\subsection{Finiteness conditions for modules over a commutative ring spectrum}
\label{subsec:finite-k}
In this paper, a \emph{commutative ring spectrum} is an $E_{\infty}$-algebra object in $\Mod(\bS)$. Let $k$ be a commutative ring spectrum. Write $(\Mod(k), \otimes_k)$ for the symmetric monoidal stable $\infty$-category of $k$-module spectra.  The mapping spectrum $\underline{\Maps}(M,N)$ attached to two objects of $\Mod(k)$ has a natural $k$-module structure.

The following conditions are equivalent in $\Mod(k)$ \cite[Proposition 7.2.4.4]{HA}:
\begin{enumerate}
\item $M$ is \emph{compact}, i.e., the functor $\Maps(M,-):\Mod(k) \to \cS$ commutes with filtered colimits.
\item $\underline{\Maps}(M,-)$, regarded either as a functor $\Mod(k) \to \Mod(\bS)$ or $\Mod(k) \to \Mod(k)$, commutes with direct sums. \cite[Prop. 15.1]{DAGI}
\item $M$ is \emph{perfect} i.e., it is a summand of an object which carries a finite filtration whose subquotients have the form $\Sigma^n k$.
\item $M$ is \emph{dualizable}, i.e., there is a second object $M^*$ and a pair of maps $k \to M^* \otimes_k M$ and $M \otimes_k M^* \to k$ such that the composite
\[
M = M \otimes_k k \to M \otimes_k (M^* \otimes_k M) = (M \otimes_k M^*) \otimes_k M \to k \otimes_k M = M
\]
is an isomorphism.
\end{enumerate}

We will write $\Modomega{k} \subset \Mod(k)$ for the full subcategory spanned by the compact objects. For example, when $k = \bZ$, the homotopy category of $\Mod(\bZ)$ is the ``traditional'' unbounded derived category of abelian groups, and the homotopy category of $\Mod(\bZ)^{\omega}$ is the full subcategory spanned by bounded complexes with finitely generated homology groups. 

Let $\Cat_{\infty}^{\ex}$ denote the $\infty$-category of small stable $\infty$-categories and exact functors.  Then $\Cat_{\infty}^{\ex}$ admits a canonical symmetric monoidal structure $\otimes$, which comes equipped with a universal map $\cC \times \cD \to \cC \otimes \cD$ which commutes with finite colimits separately in each variable.  We also have reason to consider the ``large'' setting of presentable $\infty$-categories.  The $\infty$-category $\PrL$ of presentable $\infty$-categories also admits a symmetric monoidal structure $\otimes$, which commutes with (arbitrary) colimits separately in each variable \cite[Corollary 4.8.1.4]{HA}.  

\begin{exm}
We have $\Modomega{k} \in \Cat_{\infty}^{\ex}$ (as well as $\LModomega{A}$, $\LModft{A}$ to be introduced in the following section) and 
\[
\Mod(k) \cong \Ind(\Modomega{k}) \in \PrL.
\]
More generally, the Ind-category construction determines a symmetric monoidal functor $\Ind : \Cat_{\infty}^{\ex} \to \PrL$.  
\end{exm}

We will call a stable $\infty$-category $\cC \in \Cat_{\infty}^{\ex}$ (resp. $\cC \in \PrL$) \emph{$k$-linear} when it is endowed with an action of the algebra $\Modomega{k} \in \Cat_{\infty}^{\ex}$ (resp. $\Mod(k) \in \PrL$) (cf. \cite[Def. 6.2]{DAGVII}). We denote the bifunctor $\Modomega{k} \otimes \cC \to \cC$ (resp. $\Mod(k) \otimes \cC \to \cC$) by $\otimes_k$. The mapping spectrum $\underline{\Maps}(x,y)$ between objects of a $k$-linear category has a $k$-module structure, which is not always perfect but which represents the functor
\[
\Maps(- \otimes_k x,y) : \Mod(k)^{\omega, \op} \to \cS.
\]
If $\cC$ and $\cD$ are $k$-linear categories, we write $\Fun_k^{\ex}(\cC,\cD)$ for the $\infty$-category of exact functors that preserve this action.

\subsection{Finiteness conditions on associative algebra spectra and their modules}
\label{subsec:fin-left-mod}
For a commutative ring spectrum $k$, a \emph{$k$-algebra spectrum} will mean an associative $k$-algebra spectrum --- that is, an $E_1$-algebra over $k$.  We write $\LMod(A)$ for the $\infty$-category of left $A$-module spectra. It is presentable, stable, and has a $k$-linear structure. 

We write $\RMod(A)$ and $\Bimod(A,B)$ for the $\infty$-categories of right $A$-modules and of $(A,B)$-bimodules respectively.  The theory of right modules and bimodules is related to the theory of left modules via the natural equivalences $\RMod(A) \cong \LMod(A^{\op})$ and $\Bimod(A,B) \cong \LMod(A \otimes_k B^{\op})$ \cite[Proposition 4.3.2.7, Proposition 4.6.3.11]{HA}.

For an object $M \in \LMod(A)$, the following are equivalent \cite[Proposition 7.2.4.4]{HA}:
\begin{enumerate}
\item $\Maps(M,-):\LMod(A) \to \cS$ commutes with filtered colimits.
\item $\underline{\Maps}(M,-)$, regarded either as a functor $\LMod(A) \to \Mod(k)$ or $\LMod(A) \to \Mod(\bS)$, commutes with direct sums \cite[Prop. 15.1]{DAGI}.

\item $M$ is a summand of an object that has a finite filtration whose associated graded pieces have the form $\Sigma^n A$.
\end{enumerate}
A module obeying these conditions is called \emph{perfect}. We denote the full subcategory spanned by perfect left $A$-modules by $\LModomega{A}$. 

If $A$ is perfect as a $k$-module, then it is natural to consider a weaker finiteness condition: we say that $M \in \LMod(A)$ has \emph{finite type} if its underlying $k$-module belongs to $\Modomega{k}$. Write $\LModft{A}$ for the full subcategory spanned by finite type $A$-modules. We have a containment

\[
\LModomega{A} \subset \LModft{A} \text{ if and only if }A \in \Modomega{k}.
\]

\begin{rmk}
If $k = \bZ$ and $A$ is an associative ring, the homotopy category of $\LModomega{A}$ coincides with the category of bounded complexes of projective modules and chain homotopy classes of maps between them. For such an $A$, the condition that the underlying additive group of $A$ is finitely generated is equivalent to the condition that $A \in \Modomega{k}$, in which case the homotopy category of $\LModft{A}$ is the same as $D^b(\text{f.g. left $A$-modules})$.
\end{rmk}

\subsection{Projective $A$-module spectra}
\label{subsec:proj}

We call an object of $\LMod(A)$ a \emph{free module} if it is isomorphic to a direct sum of objects of the form $\Sigma^d A$. Write $\mathrm{Free}(A) \subset \LMod(A)$ and $\mathrm{Free}(A)^{\omega} \subset \LModomega{A}$ for the full subcategories spanned by free modules.

We call an object of $\LMod(A)$ a \emph{projective module} if it is a direct summand of a free module. Write $\mathrm{Proj}(A) \subset \LMod(A)$ and $\mathrm{Proj}(A)^{\omega} \subset \LModomega{A}$ for the full subcategories spanned by projective modules. 

The free modules $\Sigma^d A$ represent the functor 
$
\pi_d:h\LMod(A) \to \mathrm{Ab}
$ where the domain is homotopy category of $\LMod(A)$ and the codomain is the usual $1$-category of abelian groups. Furthermore, for any $M \in \LMod(A)$, the graded abelian group $\pi_*(M) := \bigoplus_{i \in \bZ} \pi_i(M)$ has the structure of a graded $\pi_*(A)$-module, and the natural map
\begin{equation}
\label{eq:sigmadrep}
[\Sigma^d A,M] \to \Hom_{\pi_*(A)}(\Sigma^d \pi_*(A),\pi_*(M))
\end{equation}
is an isomorphism. The codomain in \eqref{eq:sigmadrep} denotes the set of homomorphisms of graded abelian groups that are compatible with the grading and the $\pi_*(A)$-module structure, and $\Sigma^d \pi_*(A)$ denotes a grading shift. We record two consequences of this observation:

\begin{prop}
\label{prop:barP}
Let $\bar{P}$ be a graded abelian group equipped with a left $\pi_*(A)$-module structure. If $\bar{P}$ is projective, then there is an object of $\mathrm{Proj}(A)$ such that $\pi_*(P) \cong \bar{P}$ as graded $\pi_*(A)$-modules.
\end{prop}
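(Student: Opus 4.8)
The plan is to build $P$ by choosing a free resolution of $\bar P$ at the level of graded $\pi_*(A)$-modules and then realizing it, using \eqref{eq:sigmadrep} to lift the relevant maps. Since $\bar P$ is projective, it is a direct summand of a graded free $\pi_*(A)$-module $\bar F = \bigoplus_{j} \Sigma^{d_j} \pi_*(A)$; choose a splitting, i.e. a graded $\pi_*(A)$-linear idempotent $\bar e : \bar F \to \bar F$ with image $\bar P$. Set $F := \bigoplus_j \Sigma^{d_j} A \in \mathrm{Free}(A)$, so $\pi_*(F) \cong \bar F$ as graded $\pi_*(A)$-modules. We want to produce an idempotent endomorphism $e$ of $F$ in the homotopy category $h\LMod(A)$ realizing $\bar e$, and then take $P$ to be its image (a retract, hence an object of $\LMod(A)$, and projective since it is a summand of the free module $F$).

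The key computation is that $\eqref{eq:sigmadrep}$, applied degreewise and summed, identifies $[F,F] = \pi_0\,\underline{\Maps}(F,F)$ with $\Hom_{\pi_*(A)}(\bar F,\bar F)$: indeed $\underline{\Maps}(F,F) \simeq \prod_j \underline{\Maps}(\Sigma^{d_j}A, F)$ and each factor has $\pi_0$ equal to $\Hom_{\pi_*(A)}(\Sigma^{d_j}\pi_*(A),\pi_*(F))$ by \eqref{eq:sigmadrep}. (A mild point: one should note this $\pi_0$ is compatible with composition, so the isomorphism $[F,F]\cong\Hom_{\pi_*(A)}(\bar F,\bar F)$ is one of rings; this follows from naturality of \eqref{eq:sigmadrep}.) Thus $\bar e$ lifts to a genuine idempotent $e \in [F,F]$ — not merely an idempotent up to homotopy with coherence data, but an honest idempotent in the ordinary ring $[F,F]$. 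Since $h\LMod(A)$ is idempotent-complete (it is the homotopy category of a stable $\infty$-category, which admits all colimits, so in particular every idempotent splits — alternatively split it as the colimit of the telescope $F \xrightarrow{e} F \xrightarrow{e} \cdots$), we get a retract $P$ of $F$ with idempotent $e$. Then $\pi_*(P)$ is the image of $\pi_*(e) = \bar e$ acting on $\pi_*(F) \cong \bar F$, which is $\bar P$ by construction, and this identification is $\pi_*(A)$-linear since $\pi_*(e)$ is.

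The only real subtlety — and the step I would be most careful about — is the passage from "idempotent in the homotopy category" to "summand in $\LMod(A)$": one needs that $h\LMod(A)$ is idempotent-complete, which is true but should be invoked with a reference (e.g. that a stable $\infty$-category with sequential colimits has idempotent-complete homotopy category, splitting the idempotent via the mapping telescope $\varinjlim(F \xrightarrow{e} F \xrightarrow{e} \cdots)$), together with the fact that $\pi_*$ commutes with this filtered colimit so that $\pi_*(P)$ is computed as the image of $\bar e$. Everything else is bookkeeping with \eqref{eq:sigmadrep}. Note the argument does not attempt to make $P$ perfect or finite type — indeed $\bar P$ and hence $F$ may be infinitely generated — which is consistent with the statement only asserting $P \in \mathrm{Proj}(A)$.
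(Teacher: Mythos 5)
Your proposal is correct and follows essentially the same route as the paper's own proof: reduce to the free case, lift the splitting idempotent $\bar e$ along \eqref{eq:sigmadrep}, and split the resulting idempotent endomorphism of $F$. The paper handles the idempotent-splitting step by citing \cite[Lemma 1.2.4.6]{HA}, which is precisely the telescope argument you sketch; your treatment is a more explicit version of the same argument.
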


\begin{proof}
First suppose that $\bar{P} = \bar{F}$ is free: $\bar{F} \cong \bigoplus_{i \in I} \Sigma^{d_i} \pi_{*}(A)$.  Then we may take $P= F = \bigoplus_{i \in I} \Sigma^{d_i} A$. 

In general, $\bar{P}$ is the image of an idempotent endomorphism $e:\bar{F} \to \bar{F}$. The isomorphism \eqref{eq:sigmadrep} shows this lifts to an idempotent endomorphism of $F$, and idempotents in $\LMod(A)$ split by \cite[Lemma 1.2.4.6]{HA}.
\end{proof}

\begin{prop}
\label{prop:PM}
Let $P \in \Proj(A)$ and $M \in \LMod(A)$. Then the map
\begin{equation}
\label{eq:PM}
[P,M] \to \Hom_{\pi_*(A)}(\pi_*(P),\pi_*(M))
\end{equation}
is an isomorphism.
\end{prop}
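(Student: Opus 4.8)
The plan is to bootstrap from the free case, where the statement is precisely the isomorphism \eqref{eq:sigmadrep} recorded above, using that both sides of \eqref{eq:PM} are additive and convert coproducts in the variable $P$ into products. First I would note that \eqref{eq:PM} is natural in $P$: it sends a homotopy class $f \colon P \to M$ to $\pi_*(f)$, and this construction is visibly compatible with precomposition. Now for a free module $F = \bigoplus_{i \in I} \Sigma^{d_i} A$, homotopy groups commute with arbitrary direct sums, so $\pi_*(F) \cong \bigoplus_{i \in I} \Sigma^{d_i}\pi_*(A)$; a graded $\pi_*(A)$-linear map out of a direct sum is the same data as a family of maps out of the summands, so $\Hom_{\pi_*(A)}(\pi_*(F),\pi_*(M)) \cong \prod_i \Hom_{\pi_*(A)}(\Sigma^{d_i}\pi_*(A),\pi_*(M))$, and likewise $[F,M] \cong \prod_i [\Sigma^{d_i}A, M]$ since $F$ is a coproduct in $\LMod(A)$. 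Under these identifications the comparison map \eqref{eq:PM} for $F$ is the product over $i$ of the maps \eqref{eq:sigmadrep}, hence an isomorphism.

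Next, given $P \in \Proj(A)$, choose (by definition of projective) a free module $F$ and a splitting $F \cong P \oplus P'$ in $\LMod(A)$, exhibited by an idempotent $e \colon F \to F$. Applying the functors $[-,M]$ and $\Hom_{\pi_*(A)}(\pi_*(-),\pi_*(M))$ to this splitting, and using naturality of \eqref{eq:PM} together with its compatibility with the resulting idempotents, the comparison map for $F$ is identified with the direct sum of the comparison maps for $P$ and for $P'$. A retract of an isomorphism is an isomorphism, so the previous paragraph forces \eqref{eq:PM} for $P$ to be an isomorphism, completing the proof.

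The argument is entirely formal, so I do not expect a genuine obstacle; the only points requiring a moment's care are (i) that $\pi_*$ preserves arbitrary direct sums, so that the free case really does reduce termwise to \eqref{eq:sigmadrep}, and (ii) that the comparison map is compatible with the product decomposition in the free case and with the idempotent splitting in the projective case — both of which are immediate from naturality of \eqref{eq:PM} in $P$.
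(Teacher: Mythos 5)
Your proof is correct and matches the paper's argument: both reduce to the free case by writing $P$ as a retract of a free module $F$ via an idempotent, identify the comparison map for $F$ with a product of the maps \eqref{eq:sigmadrep} using that $\pi_*$ commutes with direct sums, and then invoke that a retract of an isomorphism is an isomorphism.
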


Note one consequence of Proposition \ref{prop:PM} is that the lift in Proposition \ref{prop:barP} of $\bar{P}$ to $P$ is unique up to isomorphism.

\begin{proof}
This is a weaker version of \cite[Corollary 7.2.2.19]{HA}; it can be proved as follows.
Since $P$ is a summand of a free module $F = \bigoplus_{i} \Sigma^{d_i} A$, the domain of \eqref{eq:PM} is a retract of $\prod_i [\Sigma^{d_i}A,M]$ and the codomain is a retract of $\prod_i \Hom_{\pi_*(A)}(\Sigma^{d_i} \pi_*(A),\pi_*(M))$. The retractions that are induced by an idempotent endomorphism of $F$ commute with the maps \eqref{eq:PM} for $P$ and for $F$. Since the retract of an isomorphism is an isomorphism, we are reduced to proving it for free modules, and then further reduced to the case $P = \Sigma^d A$, which is \eqref{eq:sigmadrep}.
\end{proof}

\subsection{Finiteness conditions for functors}
\label{subsec:fin-con-fun}
Let $A$ and $B$ be $k$-algebra spectra. Then $\LMod(A)$ and $\LMod(B)$ are $k$-linear presentable stable $\infty$-categories, and we can consider $k$-linear exact functors between them. For such a functor, the following are equivalent:
\begin{enumerate}
\item $F:\LMod(A) \to \LMod(B)$ preserves filtered colimits.
\item $F:\LMod(A) \to \LMod(B)$ preserves all colimits.
\item $F:\LMod(A) \to \LMod(B)$ preserves (arbitrary) direct sums.
\item $F$ has a right adjoint.
\end{enumerate}
We write $\Fun_k^L(\LMod(A),\LMod(B)) \subset \Fun_k^{\ex}(\LMod(A),\LMod(B))$ for the full subcategory spanned by colimit-preserving functors.

If $F$ is any such functor, then $F(A)$ supports a $(B,A)$-bimodule structure, $F$ is isomorphic to $F(A) \otimes_A -$, and

\begin{equation}
\label{eq:role-of-bimodules}
\Fun^L_k(\LMod(A),\LMod(B)) \cong \LMod(B \otimes_k A^{\op})
\end{equation}
 \cite[Proposition 4.8.4.1]{HA}.  Furthermore, the restriction to $\LModomega{A}$ induces an equivalence:
\begin{equation}
\label{eq:using-ind}
\Fun^L_k(\LMod(A),\LMod(B)) \xrightarrow{\sim} \Fun^{\ex}_k(\LModomega{A},\LMod(B)).
\end{equation}

\begin{prop}
\label{prop:FA-perf-B}
Let $A$ and $B$ be $k$-algebra spectra and let $F$ be a colimit-preserving $k$-linear functor $\LMod(A) \to \LMod(B)$.
The following are equivalent:
\begin{enumerate}
\item $F(A)$ is perfect as a left $B$-module
\item $F$ carries $\LModomega{A}$ into $\LModomega{B}$
\end{enumerate}
\end{prop}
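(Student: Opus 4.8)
The plan is to prove the equivalence of conditions (1) and (2) by reducing to the generating object $A \in \LModomega{A}$ and then using that $F$ is exact and colimit-preserving. First I would observe that the implication $(2) \Rightarrow (1)$ is immediate: since $A$ is itself a perfect left $A$-module (it is the free rank-one module, hence of the form $\Sigma^0 A$, which trivially satisfies condition (3) of the characterization of perfect modules in \S\ref{subsec:fin-left-mod}), condition (2) applied to $A$ gives that $F(A) \in \LModomega{B}$, which is exactly (1). The substance is in $(1) \Rightarrow (2)$.

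For $(1) \Rightarrow (2)$, I would argue that the full subcategory $\cC \subseteq \LModomega{A}$ consisting of those $M$ with $F(M) \in \LModomega{B}$ is a stable subcategory closed under retracts, and that it contains $A$; since $\LModomega{A}$ is the smallest such subcategory (it is generated under finite colimits, shifts, and retracts by $A$ — this is precisely the content of characterization (3) of perfect modules, i.e.\ $\LModomega{A} = \mathrm{Proj}(A)^\omega$ thickened, or more directly: $\LModomega{A}$ is the idempotent completion of the thick subcategory generated by $A$), it follows that $\cC = \LModomega{A}$. To see that $\cC$ is closed under the relevant operations: $F$ is exact, so it preserves finite colimits and shifts, and $\LModomega{B} \subseteq \LMod(B)$ is closed under finite colimits and shifts (the perfect objects form a stable subcategory); $F$ also preserves retracts since it is a functor, and $\LModomega{B}$ is closed under retracts because perfect = summand-of-finite-cell (characterization (3) is manifestly retract-closed, or use that compact objects are closed under retracts). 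Hence $\cC$ is a thick subcategory of $\LModomega{A}$ containing $A$, so $\cC = \LModomega{A}$, giving (2).

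Alternatively — and this is perhaps cleaner — I would invoke the bimodule description directly: by \eqref{eq:role-of-bimodules}, $F \cong F(A) \otimes_A -$, where $F(A)$ is the associated $(B,A)$-bimodule. If $F(A)$ is perfect as a left $B$-module, then for any $M \in \LModomega{A}$, writing $M$ as a retract of a finite iterated extension of shifts of $A$, the object $F(A) \otimes_A M$ is a retract of a finite iterated extension of shifts of $F(A) \otimes_A A \cong F(A)$, hence perfect over $B$. This makes the thick-subcategory argument explicit.

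The main obstacle, such as it is, is purely bookkeeping: one must be careful that "perfect left $B$-module" is genuinely closed under retracts and finite colimits, which is handled by citing the equivalence of conditions in \S\ref{subsec:fin-left-mod} (in particular characterization (3), which exhibits $\LModomega{B}$ as manifestly a thick subcategory), and that $\LModomega{A}$ really is the thick subcategory generated by $A$ — this is standard, following from the same characterization applied to $A$ together with the splitting of idempotents \cite[Lemma 1.2.4.6]{HA}. There is no deep point; the proposition is a formal consequence of exactness of $F$ together with $A$ being a compact generator of $\LMod(A)$.
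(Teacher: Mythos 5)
Your proposal is correct and takes essentially the same approach as the paper: the "alternative" bimodule argument you give at the end is verbatim the paper's proof (apply $F$ to a finite filtration of $M$ with subquotients $\Sigma^{d_i}A$, observe the subquotients of the image are $\Sigma^{d_i}F(A)$, then pass to retracts), and your primary thick-subcategory formulation is just a conceptual repackaging of that same filtration argument.
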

In other words after \eqref{eq:role-of-bimodules} we have
\begin{equation}
\Fun^\ex_k(\LModomega{A},\LModomega{B}) \cong \left(\begin{array}{c}\text{ full subcategory of  } \\
\text{
 $\Bimod(B,A)$ spanned } \\
 \text{by bimodules that are } \\
 \text{perfect as left $B$-modules}
\end{array}
\right)
\end{equation}

\begin{proof}
Condition (2) implies condition (1) because $A$ is an object of $\LModomega{A}$. If $M \in \LModomega{A}$ has a filtration
\[
0 \to M_{\leq 0} \to M_{\leq 1} \to \cdots \to M_{\leq n} = M
\]
with $M_{\leq i}/M_{\leq i-1} \cong \Sigma^{d_i} A$, then $F(M)$ has a filtration whose subquotients are isomorphic to $\Sigma^{d_i} F(A)$. If $M'$ is a summand of such an $M$ then $F(M')$ is a summand of $F(M)$. Since $F(A)$ is perfect so is $\Sigma^{d_i} F(A)$, therefore so is $F(M)$, and therefore so is $F(M')$ --- this shows that (1) implies (2).
\end{proof}

\begin{prop}
\label{prop:FA-right-perfect}
Let $A$ and $B$ be $k$-algebra spectra and let $F$ be a  $k$-linear colimit-preserving functor $\LMod(A) \to \LMod(B)$.
Suppose that $F(A)$ is perfect as a right $A$-module and as a $k$-module. Then $F$ carries $\LModft{A}$ into $\LModft{B}$.
\end{prop}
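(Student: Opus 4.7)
The plan is to identify $F$ with the bimodule-tensor functor $F(A)\otimes_A(-)$ and then check perfectness of the underlying $k$-module of $F(A)\otimes_A M$ ``by hand'' using the fact that $F(A)$ is perfect as a right $A$-module.

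First, I would invoke \eqref{eq:role-of-bimodules}: the $k$-linear colimit-preserving functor $F\co \LMod(A)\to \LMod(B)$ corresponds to a $(B,A)$-bimodule (namely $F(A)$), and $F$ is canonically isomorphic to $F(A)\otimes_A(-)$. To prove that $F$ sends $\LModft{A}$ into $\LModft{B}$, it suffices, given $M\in\LModft{A}$, to show that the underlying $k$-module of $F(A)\otimes_A M$ lies in $\Modomega{k}$.

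The main step is then purely a statement about module spectra: if $N$ is perfect as a right $A$-module and $M$ is perfect as a $k$-module, then $N\otimes_A M$ is perfect as a $k$-module. By definition, $N$ is a retract of an object $\widetilde N$ that admits a finite filtration
\[
0 = \widetilde N_0 \to \widetilde N_1 \to \cdots \to \widetilde N_n = \widetilde N
\]
whose associated graded pieces are of the form $\Sigma^{d_i}A$ (as right $A$-modules). Tensoring with $M$ over $A$ is exact and preserves retracts, so $N\otimes_A M$ is a retract of $\widetilde N\otimes_A M$, which carries a finite filtration whose subquotients are $\Sigma^{d_i}A\otimes_A M \cong \Sigma^{d_i}M$ as $k$-modules. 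Since $M\in\Modomega{k}$ each shift $\Sigma^{d_i}M$ belongs to $\Modomega{k}$; because $\Modomega{k}\subset \Mod(k)$ is closed under finite colimits (in particular extensions) and under retracts, we conclude $N\otimes_A M\in\Modomega{k}$.

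Applying this with $N=F(A)$ and $M\in\LModft{A}$ gives $F(M)\in\LModft{B}$, as desired. No step is genuinely hard: the role of the hypothesis ``$F(A)$ perfect as a right $A$-module'' is exactly to reduce the computation of $F(A)\otimes_A M$ as a $k$-module to a finite colimit of shifts of $M$, and the hypothesis ``$F(A)$ perfect as a $k$-module'' (which is implied by the previous one together with the implicit assumption $A\in\Modomega{k}$ needed for $\LModft{A}$ to make sense) is only used to treat the case $M=A$, or to ensure the assertion is internally consistent. The only mild subtlety is to be careful that the filtration and retract manipulations are taking place at the level of the underlying $k$-module of a right $A$-module, but this is immediate because the forgetful functor $\RMod(A)\to\Mod(k)$ preserves finite colimits and retracts.
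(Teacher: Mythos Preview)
Your proposal is correct and follows essentially the same approach as the paper: reduce to showing that $(-)\otimes_A(-)$ carries $\RMod(A)^{\omega}\times\LModft{A}$ into $\Modomega{k}$, then use a filtration of the perfect right $A$-module with subquotients $\Sigma^{d_i}A$ so that tensoring with $M$ produces a finite extension of shifts of $M$, and pass to retracts. Your remark that the hypothesis ``$F(A)$ perfect as a $k$-module'' is not really used in the argument (and follows from the other hypothesis once $A\in\Modomega{k}$) is also noted in the paper immediately after the proposition.
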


\begin{rmk} 
When we consider $\LModft{A}$, it will typically be the case that $A$ is perfect as a $k$-module --- in that case, when $F(A)$ is perfect as a right $A$-module it is automatically perfect as a $k$-module. 
\end{rmk}

\begin{proof}
Since $F(A) \otimes_A M$ belongs to $\LModft{B}$ exactly when the underlying $k$-module belongs to $\Modomega{k}$, it suffices to show that the bifunctor $(-) \otimes_A (-):\RMod(A) \times \LMod(A) \to \Mod(k)$ carries $\RMod(A)^{\omega} \times \LModft{A}$ into $\Modomega{k}$.

Let $N \in \RMod(A)^{\omega}$ and let $M \in \LModft{A}$. Fix a filtration
\[
0 \to N_{\leq 0} \to \cdots \to N_{\leq n} = N
\]
such that $N_{\leq i}/N_{\leq i-1}$ is a suspension of $A$ (as a right $A$-module). Since $\Sigma^{d_i} A \otimes_A M = \Sigma^{d_i} M$, and $M$ is perfect as a $k$-module, it follows that $N \otimes_A M$ is perfect as a $k$-module. If $N'$ is a summand of $N$ then $N' \otimes_A M$ is a summand of $N \otimes_A M$ so it is also perfect as a $k$-module.

\end{proof}

\begin{rmk}
It is tempting to guess that there are weaker hypotheses than those of Prop. \ref{prop:FA-right-perfect} that would guarantee that $F$ carries $\LModft{A}$ to $\LModft{B}$. The following example shows some limitations --- at least, that it can fail even when $F(A)$ is perfect or finite type as a left $B$-module. Let $A = k[G]$, $B = k$, and let $M = k$ be the trivial $(B,A)$-bimodule. Tensoring with $M$ is isomorphic to the colimit-preserving functor
\[
(-)_{hG}:\LMod(k[G]) \to \LMod(k),
\]
i.e. to homotopy $G$-coinvariants. The functor carries the trivial module to the $k$-homology of $BG$, which often does not lie in $\LMod(k)^{\omega} = \LMod(k)^{\mathrm{ft}}$. For example, the $k$-homology of $BG$ is not perfect when $G$ is finite and nontrivial and $k = \bS$ or $\bZ$, and if $p$ divides the order of $G$ then it is not perfect when $k = \bS_p$, $\bZ_p$, or $\bF_p$. Thus, it does not carry $\LModft{k[G]}$ into $\LModft{k}$, even though $M$ is perfect and finite type as a left $B$-module. (Meanwhile, note that since $M$ is not perfect as a right $A$-module, it also does not carry $\LMod(k[G])^{\omega}$ into $\LMod(k)^{\omega}$.)

For some values of $k$, the Greenlees-Sadofsky ``Tate vanishing'' or Hopkins-Lurie ``ambidexterity'' can repair this issue in a significant way \S\ref{subsec:Knlocalfun}.
\end{rmk}

\subsection{Finiteness conditions for the right adjoint functor}
\label{subsec:right-adjoint}
Let $F:\LMod(A) \to \LMod(B)$ be a colimit-preserving $k$-linear functor, so that it has a $k$-linear right adjoint $G$. Just as $F(M) \cong F(A) \otimes_A M$ for $F(A)$ endowed with its natural $(B,A)$-bimodule structure \eqref{eq:role-of-bimodules}, $G(M)$ is given by the formula
\begin{equation}
\label{eq:GMFAM}
G(M) = \underline{\Maps}_{\LMod(B)}(F(A),M)
\end{equation}
The right $A$-module structure of $F(A)$ induces a left $A$-module structure on \eqref{eq:GMFAM}. Note in particular that $G(B) \cong \underline{\Maps}_{\LMod(B)}(F(A),B)$. 

\begin{prop}
\label{prop:right-adjoint}
Let $A$ and $B$ be $k$-algebra spectra, and let $F:\LMod(A) \to \LMod(B)$ be a colimit-preserving $k$-linear functor. Let $G:\LMod(B) \to \LMod(A)$ be its right adjoint. The following are equivalent:
\begin{enumerate}
\item $F(A)$ is perfect as a left $B$-module
\item $F$ carries $\LModomega{A}$ into $\LModomega{B}$
\item $G$ preserves colimits
\item There is a natural isomorphism $G(M) \cong G(B) \otimes_B M$
\end{enumerate}
\end{prop}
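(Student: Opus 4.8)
The plan is to prove the cycle of equivalences $(1) \Rightarrow (2) \Rightarrow (3) \Rightarrow (4) \Rightarrow (1)$, leaning on the bimodule description of colimit-preserving functors from \eqref{eq:role-of-bimodules}. Write $P := F(A)$ for the $(B,A)$-bimodule representing $F$, so that $F \cong P \otimes_A (-)$ and, by \eqref{eq:GMFAM}, $G \cong \underline{\Maps}_{\LMod(B)}(P,-)$ with its induced left $A$-module structure. The implication $(1) \Rightarrow (2)$ is exactly Proposition \ref{prop:FA-perf-B}, so nothing new is needed there.

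For $(2) \Rightarrow (3)$: if $F$ carries $\LModomega{A}$ into $\LModomega{B}$, then in particular $P = F(A)$ is a perfect (hence compact) $B$-module, so $\underline{\Maps}_{\LMod(B)}(P,-)$ commutes with direct sums by the equivalent characterization of compactness in \S\ref{subsec:fin-left-mod}(2). Since $G$ is an exact functor between stable presentable $\infty$-categories that preserves direct sums, it preserves all colimits (finite colimits come for free from exactness, and every colimit is built from finite colimits and filtered ones, or directly from coproducts and coequalizers). For $(3) \Rightarrow (4)$: a colimit-preserving $k$-linear functor $G : \LMod(B) \to \LMod(A)$ is itself given by tensoring with a bimodule, namely $G \cong G(B) \otimes_B (-)$, again by \eqref{eq:role-of-bimodules} applied to $G$; this is precisely statement $(4)$. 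Finally $(4) \Rightarrow (1)$: if $G \cong G(B) \otimes_B (-)$ then $G$ preserves colimits, so by adjunction $F$ preserves compact objects — indeed, for $M \in \LModomega{A}$, the functor $\Maps_{\LMod(B)}(F(M),-) \cong \Maps_{\LMod(A)}(M, G(-))$ commutes with filtered colimits because $M$ is compact and $G$ preserves (filtered) colimits; thus $F(M)$ is compact, and in particular $F(A) = P$ is perfect as a left $B$-module.

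I expect the only point requiring genuine care is the passage $(3) \Rightarrow (4)$, i.e.\ correctly invoking \eqref{eq:role-of-bimodules}: one must check that the right adjoint $G$ is not just colimit-preserving but $k$-linear (which it is, being the right adjoint of a $k$-linear functor between $k$-linear categories, so the adjunction is internal to $k$-linear presentable categories), and that the bimodule representing it is $G(B)$ with the evident structure. Everything else is a formal consequence of compactness/perfectness bookkeeping already set up in \S\ref{subsec:fin-left-mod}--\S\ref{subsec:fin-con-fun}. An alternative organization would be to prove $(1) \Leftrightarrow (2)$, $(1) \Leftrightarrow (3)$, and $(3) \Leftrightarrow (4)$ somewhat independently, but the linear cycle above is the most economical.
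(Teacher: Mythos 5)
Your proof is correct and uses the same ingredients as the paper's: Proposition \ref{prop:FA-perf-B}, the formula \eqref{eq:GMFAM} together with the compactness characterization in \S\ref{subsec:fin-left-mod}, and the bimodule correspondence \eqref{eq:role-of-bimodules} from \S\ref{subsec:fin-con-fun}. The only difference is organizational: you run the cycle $(1)\Rightarrow(2)\Rightarrow(3)\Rightarrow(4)\Rightarrow(1)$, whereas the paper proves the pairwise equivalences $(1)\Leftrightarrow(2)$, $(1)\Leftrightarrow(3)$, $(3)\Leftrightarrow(4)$; this is a cosmetic reshuffle rather than a genuinely different argument.
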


\begin{proof}
(1) and (2) are equivalent by Prop. \ref{prop:FA-perf-B}. (1) and (3) are equivalent by \eqref{eq:GMFAM}, the fact that colimits in $\LMod(B)$ are computed in $\Mod(k)$, and \S\ref{subsec:fin-left-mod}. (3) and (4) are equivalent by \S\ref{subsec:fin-con-fun}.
\end{proof}

As a corollary we have the following criterion for a colimit-preserving $k$-linear functor to induce a pair of adjoint functors between $\LModft{A}$ and $\LModft{B}$:
 
\begin{prop}
\label{prop:AftBft}
Suppose that $A$ and $B$ are perfect over $k$,  let $F:\LMod(A) \to \LMod(B)$ be a colimit-preserving $k$-linear functor, and let $G$ be its right adjoint. Suppose that $F(A)$ is perfect as a left $B$-module and as a right $A$-module. Then $F$ carries $\LModft{A}$ into $\LModft{B}$ and $G$ carries $\LModft{B}$ into $\LModft{A}$.
\end{prop}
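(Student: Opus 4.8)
The plan is to deduce the two assertions separately, each as an application of Proposition \ref{prop:FA-right-perfect} — the first to $F$ itself, the second to its right adjoint $G$.

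The statement that $F$ carries $\LModft{A}$ into $\LModft{B}$ is essentially immediate. By hypothesis $F(A)$ is perfect as a right $A$-module, and since $A$ is perfect over $k$, the remark following Proposition \ref{prop:FA-right-perfect} shows that $F(A)$ is then automatically perfect as a $k$-module. So the hypotheses of Proposition \ref{prop:FA-right-perfect} are met and the conclusion follows.

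For $G$, I would first record that $G$ is a colimit-preserving $k$-linear functor of the shape to which Proposition \ref{prop:FA-right-perfect} applies. Indeed, $F(A)$ is perfect as a left $B$-module by hypothesis, so by Proposition \ref{prop:right-adjoint} the adjoint $G$ preserves colimits, is given by $G(M) \cong G(B) \otimes_B M$, and has $G(B) \cong \underline{\Maps}_{\LMod(B)}(F(A),B)$ with its natural $(A,B)$-bimodule structure (the left $B$-action on $F(A)$ produces the right $B$-action on $G(B)$, and the right $A$-action on $F(A)$ produces the left $A$-action). To invoke Proposition \ref{prop:FA-right-perfect} with the roles of $A$ and $B$ exchanged, it then suffices to check that $G(B)$ is perfect as a right $B$-module; perfectness of $G(B)$ as a $k$-module is automatic since $B$ is perfect over $k$.

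To see that $G(B) = \underline{\Maps}_{\LMod(B)}(F(A),B)$ is perfect as a right $B$-module, I would use the filtration/summand characterization of perfectness from \S\ref{subsec:finite-k} (equivalently, dualizability of $F(A)$ as a left $B$-module). Write $F(A)$ as a summand of a left $B$-module $P$ carrying a finite filtration with subquotients $\Sigma^{d_i}B$. The contravariant functor $\underline{\Maps}_{\LMod(B)}(-,B)$ sends $\Sigma^{d_i}B$ (as a left $B$-module) to $\Sigma^{-d_i}B$ (as a right $B$-module), sends cofiber sequences to cofiber sequences in the stable category $\RMod(B)$, and sends retracts to retracts; hence it carries $P$ to a perfect right $B$-module and $F(A)$ to a summand of it. Thus $G(B)$ is perfect as a right $B$-module, and Proposition \ref{prop:FA-right-perfect} applied to $G : \LMod(B) \to \LMod(A)$ shows $G$ carries $\LModft{B}$ into $\LModft{A}$, completing the argument. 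The only step with genuine (though still routine) content is the perfectness of the $B$-linear dual in the previous sentence, together with keeping the handedness of the various module structures straight; that bookkeeping is the main, and rather minor, obstacle.
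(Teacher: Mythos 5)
Your argument is correct and matches the paper's proof essentially step for step: first apply Proposition~\ref{prop:FA-right-perfect} to $F$ using that $F(A)$ is a perfect right $A$-module, then invoke Proposition~\ref{prop:right-adjoint} to see $G$ is colimit-preserving with representing bimodule $G(B)=\underline{\Maps}_{\LMod(B)}(F(A),B)$, and finally apply Proposition~\ref{prop:FA-right-perfect} to $G$. The only cosmetic difference is that where you re-derive perfectness of $G(B)=F(A)^{\vee}$ as a right $B$-module by hand via a finite filtration, the paper simply cites Proposition~\ref{prop:vee-dual}, whose proof is exactly the filtration argument you wrote out.
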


\begin{proof}
Since $F(A)$ is perfect as a right $A$-module (and since $A$ is perfect over $k$), $F$ carries $\LModft{A}$ into $\LModft{B}$ by Prop. \ref{prop:FA-right-perfect}. Since $F(A)$ is perfect as a left $B$-module, the adjoint $G$ is colimit-preserving by \Cref{prop:right-adjoint}, and we may detect whether $G$ carries $\LModft{B}$ into $\LModft{A}$ by applying Prop. \ref{prop:FA-right-perfect} to
\[
G(B) = \underline{\Maps}_{\LMod(B)}(F(A),B)
\]
But this is $F(A)^\vee$, which is perfect as a right $B$-module by Prop. \ref{prop:vee-dual}.
\end{proof}

\begin{rmk}
\label{rem:koszul}
Not every pair of adjoint functors $\LModft{A} \leftrightarrows \LModft{B}$ extends to a colimit-preserving functor $\LMod(A) \to \LMod(B)$. For example when $G$ is a commutative $p$-group, \cite[\S 3.6]{kug}  constructs self-equivalences of $\LModft{\KU_p[G]}$ that exchange the trivial module $\KU_p$ and the free module $\KU_p[G]$ 
\[
\KU_p \leftrightarrow \KU_p[G]
\]
Since the trivial module is not perfect, the equivalence does not preserve the subcategory $\LMod(\KU_p[G])^{\omega}$ and does not extend to a self-equivalence of $\LMod(\KU_p[G])$.
\end{rmk}

Another consequence of Prop. \ref{prop:right-adjoint} is that any $k$-linear functor $\LModomega{A} \to \LModomega{B}$ extends to a colimit-preserving functor $\LMod(A) \to \LMod(B)$ whose right adjoint is also colimit-preserving. We will want a criterion for this right adjoint to carry $\LModomega{B}$ back into $\LModomega{A}$. According to Prop. \ref{prop:FA-perf-B}, a necessary and sufficient condition is that
\begin{equation}
\label{eq:nn-sym-formula-for-G(B)}
G(B) = \underline{\Maps}_{\LMod(B)}(F(A),B) \text{ is perfect as a left $A$-module}
\end{equation}
It is perhaps hard to tell at a glance whether this is the case---for instance to give a criterion in terms of the right $A$-module structure on $F(A)$. We can give a useful criterion like that when $A$ and $B$ are both ``symmetric''; this will be discussed in \S \ref{sec: symmetric}.

\subsection{Change of rings}
\label{subsec:change-of-rings}
Let $k$ continue to denote a commutative ring spectrum (i.e. what it has been denoting since \S\ref{subsec:finite-k}). Suppose we have a second commutative ring spectrum $k'$, and a map $u:k \to k'$. Then $u$ induces a symmetric monoidal, colimit-preserving functor
\begin{equation}
\label{eq:otimeskprime}
u^*:\Mod(k) \to \Mod(k') \qquad u^*(M):= k' \otimes_k M
\end{equation}
that carries $\Modomega{k}$ into $\Mod(k')^{\omega}$.

We also denote by $u^*$ the functor induced by \eqref{eq:otimeskprime} from algebras in $\Mod(k)$ to algebras in $\Mod(k')$. If $A$ is a $k$-algebra spectrum and $A' := u^*(A)$ is the induced $k'$-algebra spectrum, $\LMod(A)$ and $\LMod(A')$ are related via the formula in $\PrL$
\begin{equation}
\label{eq:haukensis}
\LMod(A') \cong \Mod(k') \otimes_{\Mod(k)} \LMod(A).
\end{equation}

It follows that a $k$-linear equivalence between $\LMod(A)$ and $\LMod(B)$ induces a $k'$-linear equivalence between $\LMod(A')$ and $\LMod(B')$, where $B$ is a second $k$-algebra spectrum and $B' := u^*(B)$.

\begin{rmk}
We can also deduce an equivalence $\LMod(A')^{\omega} \cong \LMod(B')^{\omega}$ whenever $\LModomega{A} \cong \LModomega{B}$. But an equivalence $\LModft{A} \cong \LModft{B}$ may not induce an equivalence $\LModft{A'} \ncong \LModft{B'}$. The self-equivalences in Remark \ref{rem:koszul} provide an example with $k = \KU_p$ and $k' = \bQ_p[\beta,\beta^{-1}]$, and the map $k \to k'$ being the Chern character. If $G$ is a commutative $p$-group then $\LModft{k'[G]} = \LMod^{\omega}(k'[G])$ is semisimple and no self-equivalence of it can exchange the trivial representation for the regular representation.
\end{rmk}

We would like to study the converse problem: given a $k'$-linear equivalence $\LMod(A') \cong \LMod(B')$, can we conclude that $\LMod(A) \cong \LMod(B)$?

\begin{prop}\label{prop:conservative}
Let $A$ and $B$ be $k$-algebras which are perfect over $k$. Let $F(A)$ be a $(B,A)$-bimodule which is perfect separately as a left $B$-module and as a right $A$-module. Let $u:k \to k'$ be a commutative $k$-algebra and put $A' := u^*(A)$ and $B' = u^*(B)$. 
If
\begin{equation}
\label{eq:nakayama}
u^*:\Modomega{k} \to \Mod(k')^{\omega} \text{ is conservative}
\end{equation}
and the induced functor
\begin{equation}
\label{eq:nosema}
F':\LMod(A') \to \LMod(B')
\end{equation}
is an equivalence of $k'$-linear $\infty$-categories, then $\LMod(A) \to \LMod(B)$ is an equivalence of $k$-linear $\infty$-categories.
\end{prop}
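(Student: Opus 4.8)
The plan is to write $F\colon\LMod(A)\to\LMod(B)$ for the colimit-preserving $k$-linear functor corresponding to the bimodule $F(A)$ under \eqref{eq:role-of-bimodules}, to let $G$ be its right adjoint, and to prove $F$ is an equivalence by showing that the unit $\eta\colon\mathrm{id}_{\LMod(A)}\to GF$ and counit $\varepsilon\colon FG\to\mathrm{id}_{\LMod(B)}$ are natural isomorphisms. Since $F(A)$ is perfect as a left $B$-module, $G$ is colimit-preserving by Proposition~\ref{prop:right-adjoint}, so $GF$ and $FG$ preserve colimits; as $\LMod(A)$ (resp.\ $\LMod(B)$) is generated under colimits by $A$ (resp.\ $B$), it suffices to check that $\eta_A\colon A\to GF(A)$ and $\varepsilon_B\colon FG(B)\to B$ are isomorphisms.

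Next I would cut these down to assertions about compact $k$-modules. As $F(A)$ is perfect as a left $B$-module and $B$ is perfect over $k$, $F(A)$ lies in $\Modomega{k}$, hence $F(A)\in\LModft{B}$; the hypotheses of Proposition~\ref{prop:AftBft} hold, so $F$ carries $\LModft{A}$ into $\LModft{B}$ and $G$ carries $\LModft{B}$ into $\LModft{A}$. Thus $GF(A)\in\LModft{A}$, while $G(B)\in\LModft{A}$ gives $FG(B)\in\LModft{B}$. Since $\LModft{A}$ and $\LModft{B}$ are stable subcategories and $A$, $B$ are perfect over $k$, the cofibers
\[
C:=\mathrm{cofib}\bigl(\eta_A\colon A\to GF(A)\bigr)\in\LModft{A},\qquad
C':=\mathrm{cofib}\bigl(\varepsilon_B\colon FG(B)\to B\bigr)\in\LModft{B}
\]
have underlying $k$-modules in $\Modomega{k}$.

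Finally I would pass to $k'$. The functor $u^*\colon\Mod(k)\to\Mod(k')$ is symmetric monoidal and colimit-preserving, so via \eqref{eq:haukensis} it carries the base change of $F$ to $F'$, the base change of $G$ — which by Proposition~\ref{prop:right-adjoint} is the bimodule-tensor functor $G(B)\otimes_B(-)$ with $G(B)=\underline{\Maps}_{\LMod(B)}(F(A),B)$, cf.\ \eqref{eq:GMFAM} — to the right adjoint $G'$ of $F'$, and, because the unit and counit of a tensor-with-a-dualizable-bimodule adjunction are built from coevaluation and evaluation maps, it carries $\eta_A$ and $\varepsilon_B$ to the unit $\eta'_{A'}$ and counit $\varepsilon'_{B'}$ of $F'\dashv G'$. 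Since $F'$ is an equivalence, $\eta'_{A'}$ and $\varepsilon'_{B'}$ are isomorphisms, so $u^*C=0$ and $u^*C'=0$ in $\Mod(k')$. As $C,C'\in\Modomega{k}$ and $u^*|_{\Modomega{k}}$ is conservative by \eqref{eq:nakayama}, this forces $C=C'=0$, hence $\eta_A$ and $\varepsilon_B$ are isomorphisms and $F$ is an equivalence of $k$-linear $\infty$-categories.

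The main obstacle, with the rest being compact generation and the finiteness bookkeeping of this section, is justifying that base change along $u^*$ genuinely intertwines the two adjunctions: that $u^*G\simeq G'u^*$ and that $u^*$ sends $\eta$ to $\eta'$ and $\varepsilon$ to $\varepsilon'$. I would handle this by writing $G$ itself as the bimodule-tensor functor $F(A)^{\vee}\otimes_B(-)$, noting $F(A)$ is a dualizable left $B$-module, and invoking the general fact that a symmetric monoidal functor preserves dualizable objects together with their evaluation and coevaluation maps — from which the preservation of $\eta$ and $\varepsilon$ (which at $A$ and $B$ are assembled from exactly these maps) follows; the residual care is in matching the bimodule presentations \eqref{eq:role-of-bimodules} on the two sides of $u^*$, keeping track of left versus right module structures.
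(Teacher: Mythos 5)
Your argument is essentially the paper's own: both prove $F$ is an equivalence by reducing to the unit and counit, identifying $G$ with the bimodule-tensor functor $\underline{\Maps}_{\LMod(B)}(F(A),B)\otimes_B(-)$, using that $u^*$ is symmetric monoidal to see it sends the unit and counit of $F\dashv G$ to those of $F'\dashv G'$, and then invoking conservativity of $u^*$ on $\Modomega{k}$. You are slightly more explicit than the paper about two bookkeeping points — the colimit-generation argument reducing the check to $\eta_A$ and $\varepsilon_B$, and the verification (via Proposition \ref{prop:AftBft}) that the cofibers of $\eta_A$, $\varepsilon_B$ land in $\Modomega{k}$ so that conservativity applies — both of which the paper leaves implicit but certainly uses.
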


\begin{rmk}
The hypothesis \eqref{eq:nakayama} applies when $k$ is a discrete local ring and $u:k \to k'$ is the quotient by the maximal ideal. More generally if $k$ and $k'$ are discrete rings, then \eqref{eq:nakayama} holds if and only if the image of $\Spec(k') \to \Spec(k)$ contains all the closed points.

Hypothesis \eqref{eq:nakayama} also applies to the truncation map $\bS \to \bZ$: this is one formulation of the  Whitehead theorem for homology groups. 

\end{rmk}

\begin{proof}
After Prop. \ref{prop:right-adjoint} and \eqref{eq:nn-sym-formula-for-G(B)} the right adjoint functor to $F(A) \otimes_A -$ is
\[
\underline{\Maps}(F(A),B) \otimes_B - .
\]
The counit for the adjunction is induced by a map
\begin{equation}
\label{eq:counit-in-proof}
F(A) \otimes_A \underline{\Maps}(F(A),B) \to B
\end{equation}
of $(B,B)$-bimodules, and the unit is induced by a map of $(A,A)$-modules
\begin{equation}
\label{eq:unit-in-proof}
A \to \underline{\Maps}{(F(A),B)} \otimes_B F(A)
\end{equation}
The induced $k'$-linear functors between $\LMod(A')$ and $\LMod(B')$ ($F'$ \eqref{eq:nosema} and its adjoint $G'$) are isomorphic to 
\[
u^* F(A) \otimes_{A'} - \qquad \text{and} \qquad u^*(\underline{\Maps}(F(A),B)) \otimes_{B'} -
\]

By assumption, $F'$ is an equivalence. Its inverse equivalence must be $G'$ and the maps
\[
u^*F(A) \otimes_{A'} u^*(\underline{\Maps}(F(A),B)) \to B' \qquad A' \to u^*\underline{\Maps}{(F(A),B)} \otimes_{B'} u^*F(A)
\]
are isomorphisms of bimodules, and since $u^*$ is a symmetric monoidal functor so are
\[
u^*(F(A) \otimes_A \underline{\Maps}(F(A),B)) \to B' \qquad A' \to u^*(\underline{\Maps}{(F(A),B)} \otimes_{B} F(A))
\] 
Then \eqref{eq:counit-in-proof} and \eqref{eq:unit-in-proof} are isomorphisms by \eqref{eq:nakayama}.

\end{proof}

\subsection{Duality}
There are two dualities between $\Bimod(A,B)$ and $\Bimod(B,A)$, ``left'' and ``right'' \cite[\S 4.6.2, \S 4.6.4]{HA}. In this section we review these concepts in the special case $B=k$ (dualities between $\LMod(A) \cong \Bimod(A,k)$ and $\RMod(A) \cong \Bimod(k,A)$) and make some remarks.

\subsubsection{Duality of perfect $A$-modules} For any $k$-algebra spectrum $A$ and any left $A$-module $M$, the $(A,A)$-bimodule structure on $A$ endows $\underline{\Maps}_{\LMod(A)}(M,A)$ with the structure of a right $A$-module. We denote it by $M^\vee$; the construction is a contravariant functor
\[
\LMod(A)^{\op} \to \RMod(A):M \mapsto M^\vee
\]

\begin{prop}
\label{prop:vee-dual}
For any $k$-algebra spectrum $A$, the functor
$\LMod(A)^{\op} \to \RMod(A):M \mapsto M^\vee$
restricts to an equivalence
\[
\left(\LModomega{A}\right)^{\op} \cong \RMod(A)^{\omega}
\]
\end{prop}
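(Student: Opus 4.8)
The plan is to show that $M \mapsto M^\vee = \underline{\Maps}_{\LMod(A)}(M, A)$ carries perfect left $A$-modules to perfect right $A$-modules, that the analogous functor $N \mapsto {}^\vee N = \underline{\Maps}_{\RMod(A)}(N, A)$ on right modules lands in perfect left modules, and that the two are mutually inverse on the perfect subcategories. The starting point is the observation, made just before Proposition \ref{prop:barP} and used throughout \S\ref{subsec:fin-con-fun}, that a perfect left $A$-module is a summand of an object carrying a finite filtration with associated graded pieces of the form $\Sigma^{d_i} A$. So the argument will reduce, via summands and finite filtrations, to the single case $M = A$ (or $M = \Sigma^d A$), for which $A^\vee = \underline{\Maps}_{\LMod(A)}(A, A) \cong A$ as a right $A$-module, tautologically.

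First I would check that $(-)^\vee$ is exact (it is a mapping-spectrum functor into the stable category $\RMod(A)$, contravariant, and sends cofiber sequences to fiber sequences) and sends summands to summands. Applying this to a finite filtration of a perfect $M$ whose subquotients are $\Sigma^{d_i} A$, and using $(\Sigma^d A)^\vee \cong \Sigma^{-d} A$ in $\RMod(A)$, one concludes by induction that $M^\vee$ has a finite filtration with subquotients $\Sigma^{-d_i} A$, hence is perfect; passing to summands handles general perfect $M$. Symmetrically, ${}^\vee(-):\RMod(A)^{\op} \to \LMod(A)$ carries $\RMod(A)^\omega$ into $\LModomega{A}$. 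Next I would produce the natural biduality map $M \to {}^\vee(M^\vee)$ (adjoint to the evaluation $M^\vee \otimes \text{(nothing)}$—more precisely the canonical map in $\LMod(A)$ given by $m \mapsto (\phi \mapsto \phi(m))$, constructed at the level of mapping spectra) and its counterpart on right modules, and check these are natural transformations of exact functors. Since both sides are exact in $M$ and the biduality map is an isomorphism when $M = \Sigma^d A$ (it becomes the identity $\Sigma^d A \to \Sigma^d A$), the usual dévissage—finite filtrations, then summands, using that a natural transformation of exact functors which is an equivalence on $\Sigma^d A$ is an equivalence on everything built from the $\Sigma^d A$ by extensions and retracts—shows $M \to {}^\vee(M^\vee)$ is an equivalence for all $M \in \LModomega{A}$, and likewise on the right-module side. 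This exhibits the two functors as mutually inverse equivalences $(\LModomega{A})^{\op} \cong \RMod(A)^\omega$.

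The main obstacle, such as it is, is purely bookkeeping about $\infty$-categorical naturality: one must know that $(-)^\vee$ is genuinely a functor $\LMod(A)^{\op} \to \RMod(A)$ of $\infty$-categories (not just on homotopy categories), that the biduality map is a natural transformation, and that "equivalence on the generator $A$ $\Rightarrow$ equivalence on its thick subcategory" holds at the $\infty$-categorical level—all of which is standard and available from \cite[\S 4.6.2, \S 4.6.4]{HA}, to which the section already refers. Alternatively, one can bypass the dévissage entirely by invoking the fact (also from \cite[\S 4.6]{HA}) that perfect modules are exactly the dualizable objects of the relevant bimodule category: $\LModomega{A} \cong \Bimod(A,k)^{\mathrm{dbl}}$ on the appropriate side, and the left/right dualities restrict to the claimed equivalence on dualizable objects. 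I would present the dévissage argument as the main line since it is self-contained given what the excerpt has already set up, and remark on the dualizability interpretation.
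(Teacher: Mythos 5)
Your argument matches the paper's: both reduce to the case $M = \Sigma^d A$ by dévissage through finite filtrations with subquotients $\Sigma^{d_i}A$ and then retracts, using $A^\vee \cong A$. The only distinction is that the paper simply asserts that the composite of the two $(-)^\vee$ functors is the identity, whereas you spell out the biduality map $M \to {}^{\vee}(M^{\vee})$ and verify it by the same dévissage — a useful elaboration of the paper's terse final sentence, but not a different route.
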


\begin{proof}
If $M \in \LMod(A)$ has a filtration
\[
0 \to M_{\leq 0} \to M_{\leq 1} \to \cdots \to M_{\leq n} = M
\]
by left $A$-modules, such that $M_{\leq i}/M_{\leq i-1} \cong \Sigma^{d_i} A$ for each $i$, then $M^{\vee}$ has a filtration by right $A$-modules:
\[
(M/M_{\leq n})^\vee \to \cdots (M/M_{\leq 1})^\vee \to (M/M_{\leq 0})^\vee \to M^\vee
\]
whose graded pieces have the form $\Sigma^{-d_i} A^\vee$. Since $A^\vee \cong A$, it follows that $M^\vee$ is perfect as a right $A$-module, and so is any summand of $M^\vee$. Thus $(-)^{\vee}$ carries $(\LModomega{A})^{\op}$ into $\RMod(A)^{\omega}$. The composite
\[
\RMod(A)^{\omega} = \LMod(A^{\op})^{\omega} \xrightarrow{(-)^{\vee}} \left(\RMod(A^{\op})^{\omega}\right)^{\op} = \left(\LModomega{A}\right)^{\op}
)   
\]
is the inverse functor.
\end{proof}

\subsubsection{Duality of $k$-modules}
\label{subsec:dokm}

If $M \in \LModft{A}$, then $M^*$ (the monoidal dual of $M$ regarded as a $k$-module, notation as in \S\ref{subsec:finite-k}) has the structure of a right $A$-module spectrum. If $N$ is a second $A$-module spectrum of finite type, then \cite[Prop. 4.6.2.1]{HA} the natural map
\begin{equation}
\label{eq:star-map}
\underline{\Maps}_{\LMod(A)}(M,N) \to \underline{\Maps}_{\RMod(A)}(N^*,M^*)
\end{equation}

is an equivalence of $k$-module spectra. In fact we have a commutative square:
\begin{equation}
\label{eq:lmod-rmod}
\xymatrix{
\LModft{A}^{\op} \ar[r]^-{\cong} \ar[d]_{\mathit{forget}} & \RModft(A) \ar[d]^{\mathit{forget}} \\
\left(\Modomega{k}\right)^{\op} \ar[r]_-{\cong}  & \Modomega{k}
}
\end{equation}
where the horizontal maps are given by $(-)^*$.

\begin{prop}
\label{prop:dual-of-perf}
Suppose $A$ is perfect over $k$ and that $A^*$ is perfect as a right $A$-module. Then \eqref{eq:lmod-rmod} is a fully faithful embedding of $(\LModomega{A})^{\op}$ into $\RMod(A)^{\omega}$. If $A^*$ is also perfect as a left $A$-module, then \eqref{eq:lmod-rmod} restricts to an equivalence
$
(\LModomega{A})^{\op} \cong \RMod(A)^{\omega}
$.
\end{prop}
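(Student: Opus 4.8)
The plan is to leverage the equivalence already recorded in \eqref{eq:lmod-rmod}: the $k$-linear duality functor $(-)^*$ is a contravariant equivalence $\LModft{A}^{\op}\xrightarrow{\sim}\RModft(A)$, compatible via the forgetful functors with $k$-module duality (which is itself an equivalence on $\Modomega{k}$). So the whole task reduces to pinning down where this equivalence sends the subcategory $\LModomega{A}\subset\LModft{A}$ of perfect modules, and — for the second half — showing that it hits all of $\RMod(A)^{\omega}$.

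First I would establish that $(-)^*$ carries $\LModomega{A}$ into $\RMod(A)^{\omega}$. This is the same filtration-and-retract argument as in Proposition~\ref{prop:vee-dual}: write $M\in\LModomega{A}$ as a retract of a left $A$-module admitting a finite filtration whose graded pieces are $\Sigma^{d_i}A$, and apply the exact contravariant functor $(-)^*$ to get $M^*$ as a retract of a right $A$-module admitting a finite filtration whose graded pieces are $\Sigma^{-d_i}(A^*)$, where $A^*$ is the $k$-dual of $A$ with its dual $(A,A)$-bimodule structure, viewed as a right $A$-module. By hypothesis $A^*$ is perfect as a right $A$-module, so each graded piece is perfect; perfect right $A$-modules are closed under finite extensions and retracts, so $M^*\in\RMod(A)^{\omega}$. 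Since $(-)^*$ is fully faithful on all of $\LModft{A}$, it is in particular fully faithful on $\LModomega{A}$, giving the first assertion: a fully faithful embedding $(\LModomega{A})^{\op}\hookrightarrow\RMod(A)^{\omega}$.

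Next, assuming in addition that $A^*$ is perfect as a left $A$-module, I would prove essential surjectivity of this embedding. Given $N\in\RMod(A)^{\omega}$, set $M:=N^*$, the $k$-dual of $N$ with its induced left $A$-module structure; since $N$ is dualizable over $k$ we have $M^*\cong N^{**}\cong N$ and $M\in\LModft{A}$. It remains to see $M\in\LModomega{A}$, which follows from the mirror image of the previous paragraph: $N$ is a retract of a right $A$-module admitting a finite filtration with graded pieces $\Sigma^{d_i}A$, so $M=N^*$ is a retract of a left $A$-module admitting a finite filtration with graded pieces $\Sigma^{-d_i}(A^*)$, now with $A^*$ regarded as a left $A$-module, which is perfect by the new hypothesis. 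Hence $M$ is perfect as a left $A$-module, and $M\mapsto M^*\cong N$ exhibits $N$ in the image. Combined with the first assertion, $(-)^*$ restricts to the asserted equivalence $(\LModomega{A})^{\op}\cong\RMod(A)^{\omega}$.

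The step requiring the most care — more bookkeeping than genuine obstacle — is keeping straight that the two hypotheses on $A^*$ refer to its two one-sided module structures as a summand of the $(A,A)$-bimodule $\underline{\Maps}_k(A,k)$, and that the $k$-duality of \eqref{eq:lmod-rmod} swaps ``perfect as a left module'' on one side with ``perfect as a right module'' on the other. One also needs that $(-)^*\colon\LModft{A}^{\op}\to\RModft(A)$ is exact; this is immediate since it is an equivalence of stable $\infty$-categories (equivalently, since \eqref{eq:lmod-rmod} commutes, the forgetful functors are conservative and exact, and $k$-module duality is exact on $\Modomega{k}$). With these observations in place the proof is routine.
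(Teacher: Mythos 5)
Your proof is correct and follows essentially the same route as the paper: the filtration-and-retract argument with graded pieces $\Sigma^{-d_i}A^*$ handles the ``into $\RMod(A)^{\omega}$'' direction, full faithfulness is inherited from the equivalence in \eqref{eq:lmod-rmod}, and essential surjectivity is the mirror argument (the paper packages the mirror via $A^{\op}$ and biduality $M\cong(M^*)^*$, but this is the same reasoning you spell out directly).
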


\begin{rmk}
The example in \S\ref{ex:proj-with-infinite-inj-dimension} shows the hypothesis on $A^*$ cannot be removed. The hypothesis is easy to verify for symmetric algebras.
\end{rmk}

\begin{proof}

If $M \in \LModomega{A}$ has a filtration
\[
0 \to M_{\leq 0} \to M_{\leq 1} \to \cdots \to M_{\leq n} = M
\]
by left $A$-modules, such that $M_{\leq i}/M_{\leq i-1} \cong \Sigma^{d_i} A$ for each $i$, then $M^*$ has a filtration by right $A$-modules:
\[
(M/M_{\leq n})^* \to \cdots (M/M_{\leq 1})^* \to (M/M_{\leq 0})^* \to M^*
\]
whose graded pieces have the form $\Sigma^{-d_i} A^*$.  By assumption, $A^*$ is perfect as a right $A$-module and therefore $M^*$ is as well, and so is any summand of $M^*$.  

The functor is fully faithful by \eqref{eq:star-map}. If $A^*$ is perfect as a left $A$-module, the $(A^{\op})^*$ is perfect as a right $(A^{\op})$-module and the same argument, together with $M \cong (M^*)^*$ when $M$ is a perfect $k$-module, shows that $\RMod^{\omega}(A)^{\op} \to \LModomega{A}$ is the inverse equivalence.

\end{proof}

\subsubsection{Example}
\label{ex:proj-with-infinite-inj-dimension}
Suppose $A \in \Modomega{k}$, i.e. that $\LModomega{A} \subset \LModft{A}$. The equivalence 
\[
(\LModft{A})^{\op} \cong \RModft(A)
\]
does not always carry $(\LModomega{A})^{\op}$ into $\RMod(A)^{\omega}$. For instance, consider the case where $k$ is a field and $A$ is the four-dimensional algebra with basis $e_1,e_2,f,\epsilon$, where $e_1$ and $e_2$ are orthogonal idempotents and
\[
e_1 f e_2 = f \qquad e_2 \epsilon e_2 = 0 \qquad \epsilon f = 0 \qquad \epsilon^2 = 0
\]
Left modules over this ring are representations of the quiver
\[
\xymatrix{
\bullet \ar[r]_-f & \bullet \ar@(ur,dr)^{\epsilon} 
}\qquad \text{subject to $\epsilon^2 = 0$ and $\epsilon f = 0$}
\]
while right modules are representation of the quiver
\[
\xymatrix{
\bullet & \ar[l]_-{f^T}  \bullet \ar@(dr,ur)_{\epsilon^T} 
} \qquad \text{subject to $(\epsilon^T)^2 = 0$ and $f^T \epsilon^T = 0$}
\]
The representation
\[
\xymatrix{
k \ar[r]^-{=} & k \ar@(ur,dr)^{0}
}
\]
is projective but its dual
\[
\xymatrix{
k &  \ar[l]_{=} k \ar@(dr,ur)_{0}
}
\]
is not, nor does it have a finite projective resolution.

\subsection{Symmetric structures}\label{sec: symmetric}
In the literature on derived equivalences between group algebras and their block algebras, the natural ``symmetric structures'' on these algebras is sometimes used, e.g. \cite{rouquierblock}. The symmetric structure has some pleasant consequences for the right adjoint of a functor $D^b(A) \to D^b(B)$ that is given by a complex of $(B,A)$-bimodules. In this section we explain the analog of these consequences for algebra spectra. 

\begin{dfn}
Let $B$ be a $k$-algebra spectrum that is perfect as a $k$-module. A \emph{symmetric structure} on $B$ is an isomorphism of $(B,B)$-bimodules $B \cong B^*$.
\end{dfn}

Let $F:\LMod(A) \to \LMod(B)$ be a colimit-preserving $k$-linear functor, i.e. $F(M) = F(A) \otimes_A M$. We have already seen that $F$ carries $\LModomega{A}$ into $\LModomega{B}$ if and only if the $(B,A)$-bimodule $F(A)$ is perfect as a left $B$-module. When $B$ is perfect over $k$, we can also conclude that $F(A)$ is perfect over $k$ and consider $F(A)^*$ with its $(A,B)$-bimodule structure.

\begin{prop}
Suppose that $B$ is perfect over $k$ and is endowed with a symmetric structure. Let $F:\LMod(A) \to \LMod(B)$ be a colimit-preserving functor that carries $\LModomega{A}$ into $\LModomega{B}$. Then the right adjoint $G:\LMod(B) \to \LMod(A)$ is given by
\[
G(M) = F(A)^* \otimes_B M
\]
\end{prop}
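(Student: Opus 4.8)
The plan is to combine the abstract description of the right adjoint from \S\ref{subsec:right-adjoint} with the symmetric structure in order to make it explicit. First, since $F$ carries $\LModomega{A}$ into $\LModomega{B}$, Proposition~\ref{prop:FA-perf-B} shows that $F(A)$ is perfect as a left $B$-module. Hence Proposition~\ref{prop:right-adjoint} applies: $G$ is colimit-preserving and there is a natural isomorphism $G(M)\cong G(B)\otimes_B M$. Moreover, by \eqref{eq:GMFAM} there is an identification of $(A,B)$-bimodules
\[
G(B)\cong \underline{\Maps}_{\LMod(B)}(F(A),B),
\]
in which the left $A$-action comes from the right $A$-action on $F(A)$ and the right $B$-action comes from the right $B$-action on the target copy of $B$. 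So the whole statement reduces to producing an isomorphism of $(A,B)$-bimodules $\underline{\Maps}_{\LMod(B)}(F(A),B)\cong F(A)^{*}$.

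Next I would feed in the symmetric structure. The isomorphism $B\cong B^{*}$ of $(B,B)$-bimodules gives
\[
\underline{\Maps}_{\LMod(B)}(F(A),B)\cong \underline{\Maps}_{\LMod(B)}(F(A),B^{*}),
\]
compatibly with the residual $(A,B)$-bimodule structures on both sides. Now write $B^{*}=\underline{\Maps}_{k}(B,k)$, the left $B$-module structure relevant to the outer mapping object being the one induced by the \emph{right} $B$-module structure on $B$. For any left $B$-module $N$, the tensor--hom adjunction \cite[Ch. 4]{HA} then furnishes a natural chain of equivalences
\[
\underline{\Maps}_{\LMod(B)}\bigl(N,\underline{\Maps}_{k}(B,k)\bigr)\;\cong\;\underline{\Maps}_{k}(B\otimes_{B}N,k)\;\cong\;\underline{\Maps}_{k}(N,k)\;=\;N^{*},
\]
the second equivalence coming from the canonical isomorphism $B\otimes_{B}N\cong N$. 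Taking $N=F(A)$ gives $\underline{\Maps}_{\LMod(B)}(F(A),B^{*})\cong F(A)^{*}$, hence $G(B)\cong F(A)^{*}$, and therefore $G(M)\cong F(A)^{*}\otimes_{B}M$ naturally in $M$.

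The one point demanding genuine care is the bookkeeping of the auxiliary module structures through this chain: one must check that, starting from the $(B,A)$-bimodule $F(A)$, the left $A$-action on every term is the one induced by its right $A$-action and the right $B$-action on every term is the one induced by its left $B$-action, so that each displayed step is an equivalence of $(A,B)$-bimodules rather than merely of $k$-modules. This matters because the conclusion $F(A)^{*}\otimes_{B}M$ only makes sense once $F(A)^{*}$ is equipped with the correct right $B$-module structure. Apart from this verification the argument is formal: the symmetric structure is used precisely to trade the opaque target $B$ in $\underline{\Maps}_{\LMod(B)}(F(A),B)$ for the dualizing bimodule $B^{*}$, after which the tensor--hom adjunction does the rest.
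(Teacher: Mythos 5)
Your argument is correct, and it takes a genuinely (if mildly) different route from the paper's. Both proofs reduce, via Proposition~\ref{prop:right-adjoint} and \eqref{eq:nn-sym-formula-for-G(B)}, to identifying $G(B)=\underline{\Maps}_{\LMod(B)}(F(A),B)$ with $F(A)^*$ as an $(A,B)$-bimodule, and both use the symmetric structure together with some form of tensor--hom duality to do so. The difference is where each ingredient enters. The paper first invokes the contravariant anti-equivalence \eqref{eq:star-map} from \cite[Prop.~4.6.2.1]{HA} (which requires both $F(A)$ and $B$ to be perfect over $k$) to rewrite the whole mapping object as $\underline{\Maps}_{\RMod(B)}(B^*,F(A)^*)$, then applies the symmetric structure $B^*\cong B$ and finishes by Yoneda. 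You instead apply the symmetric structure directly to the target, replacing $B$ by $B^*$, and then identify $\underline{\Maps}_{\LMod(B)}(F(A),B^*)$ with $F(A)^*$ via the coinduction adjunction (the forgetful functor $\LMod(B)\to\Mod(k)$ has right adjoint $V\mapsto\underline{\Maps}_k(B,V)$). Your route is marginally more self-contained: it doesn't rely on the perfectness of $F(A)$ over $k$ at this step, only on the freeness of $B$ over itself, whereas the paper's step \eqref{eq:star-map} uses perfectness of both modules. In exchange, the paper's version lets the pre-established duality equivalence do the heavy lifting, which keeps the bookkeeping of the residual $A$- and $B$-actions in one standard place. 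The module-structure verification you flag at the end does indeed check out; the right $B$-actions agree under the adjunction isomorphism because the symmetric isomorphism $B\cong B^*$ is a map of $(B,B)$-bimodules, so no gap remains.
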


\begin{proof}
By Prop. \ref{prop:right-adjoint}, $G$ is colimit-preserving and is given by the formula $G(M) = G(B) \otimes_B M$, and \eqref{eq:nn-sym-formula-for-G(B)} gives a formula for $G(B)$:
\[
G(B) = \underline{\Maps}_{\LMod(B)}(F(A),B)
\]
Since both $F(A)$ and $B$ are perfect over $k$, we furthermore have an isomorphism of left $A$-modules \eqref{eq:star-map}
\begin{equation}
\label{eq:star-map2}
\underline{\Maps}_{\LMod(B)}(F(A),B) \cong \underline{\Maps}_{\RMod(B)}(B^*,F(A)^*)
\end{equation}
where on the domain the left $A$-module structure is induced by the right $A$-module structure on $F(A)$, and on the codomain by the left $A$-module structure on $F(A)^*$.
The composite is 
\[
G(B) \cong \underline{\Maps}_{\RMod(B)}(B^*,F(A)^*)
\]
The bimodule isomorphism $B^* \cong B$ is also a right $B$-module isomorphism and it induces a further isomorphism
\[
G(B) \cong \underline{\Maps}_{\RMod(B)}(B,F(A)^*) \cong F(A)^*
\]
where the second isomorphism $\underline{\Maps}_{\RMod(B)}(B,N) \cong N$ holds for any right $B$-module $N$.
\end{proof}

When both $A$ and $B$ are perfect over $k$, and both are endowed with symmetric structures, we can make some further conclusions:

\begin{cor}\label{prop:restrict-to-ft}
Suppose that both $A$ and $B$ are perfect over $k$ and can be endowed with symmetric structures. Let $F:\LMod(A) \to \LMod(B)$ be a colimit-preserving functor that carries $\LModomega{A}$ into $\LModomega{B}$. Then the following are equivalent:
\begin{enumerate}
\item $F(A)$ is perfect as a right $A$-module
\item $F(A)^*$ is perfect as a left $A$-module
\item The right adjoint $G(M) = F(A)^* \otimes_B M$ carries $\LModomega{B}$ into $\LModomega{A}$
\end{enumerate}
When these equivalent conditions hold, $F$ carries $\LModft{A}$ into $\LModft{B}$ and $G$ carries $\LModft{B}$ into $\LModft{A}$.
\end{cor}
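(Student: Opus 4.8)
The plan is to assemble the statement out of the results of \S\ref{subsec:fin-con-fun}--\S\ref{subsec:right-adjoint} together with the $k$-linear duality of \S\ref{subsec:dokm}, using the chosen symmetric structures on $A$ and $B$ to translate between the $\vee$-dual and the $k$-linear dual $(-)^*$. Concretely, I would prove $(2)\Leftrightarrow(3)$ by applying Prop.~\ref{prop:FA-perf-B} to the right adjoint $G$, prove $(1)\Leftrightarrow(2)$ by the duality equivalence $(\LModomega{A})^{\op}\cong\RMod(A)^{\omega}$ coming from the symmetric structure on $A$, and deduce the final sentence from Prop.~\ref{prop:AftBft}.

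First I would record the standing facts. Since $F$ carries $\LModomega{A}$ into $\LModomega{B}$ and $A\in\LModomega{A}$, the $(B,A)$-bimodule $F(A)$ is perfect as a left $B$-module (Prop.~\ref{prop:FA-perf-B}); because $B$ is perfect over $k$, it follows that $F(A)$ is perfect over $k$, hence of finite type both as a left $B$-module and as a right $A$-module, so $F(A)\in\RModft(A)$. By the Proposition immediately preceding (using that $B$ is perfect over $k$ and symmetric), the right adjoint is $G(M)=F(A)^*\otimes_B M$; in particular $G$ is colimit-preserving and $G(B)\cong F(A)^*$ as left $A$-modules.

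For $(2)\Leftrightarrow(3)$, I would apply Prop.~\ref{prop:FA-perf-B} to the colimit-preserving $k$-linear functor $G\colon\LMod(B)\to\LMod(A)$: the functor $G$ carries $\LModomega{B}$ into $\LModomega{A}$ if and only if $G(B)\cong F(A)^*$ is perfect as a left $A$-module, which is precisely $(2)$. For $(1)\Leftrightarrow(2)$ the symmetric structure on $A$ enters: from the isomorphism $A\cong A^*$ of $(A,A)$-bimodules one reads off that $A^*$ is perfect both as a left and as a right $A$-module, so Prop.~\ref{prop:dual-of-perf}, applied to $A$ and to $A^{\op}$, shows that $(-)^*$ restricts to an equivalence $(\LModomega{A})^{\op}\cong\RMod(A)^{\omega}$ sitting inside the equivalence \eqref{eq:lmod-rmod}. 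Since $F(A)\in\RModft(A)$ and $(F(A)^*)^*\cong F(A)$ as right $A$-modules (as $F(A)$ is perfect over $k$), the object $F(A)$ is perfect as a right $A$-module if and only if $F(A)^*$ is perfect as a left $A$-module; that is $(1)\Leftrightarrow(2)$.

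Finally, assuming $(1)$--$(3)$ hold: $A$ and $B$ are perfect over $k$, $F(A)$ is perfect as a left $B$-module (recorded above) and, by $(1)$, as a right $A$-module, so Prop.~\ref{prop:AftBft} immediately gives that $F$ carries $\LModft{A}$ into $\LModft{B}$ and $G$ carries $\LModft{B}$ into $\LModft{A}$. I expect the only real subtlety to be the $(1)\Leftrightarrow(2)$ step: one must check that $F(A)$ is genuinely a finite-type right $A$-module before invoking the $(-)^*$-duality, and that Prop.~\ref{prop:dual-of-perf} matches the module structures appearing in $(1)$ and $(2)$ on the nose --- the right $A$-module structure on $F(A)$ dualizing exactly to the left $A$-module structure on $F(A)^*$ through which $G$ is defined. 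The remaining steps are direct citations of the earlier propositions.
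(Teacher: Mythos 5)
Your proposal is correct and follows essentially the same route as the paper: $(1)\Leftrightarrow(2)$ via Prop.~\ref{prop:dual-of-perf} using the symmetric structure on $A$, $(2)\Leftrightarrow(3)$ by reading off perfectness of $G(B)\cong F(A)^*$ from the formula for $G$ (i.e.\ Prop.~\ref{prop:FA-perf-B} applied to $G$), and the final sentence from Prop.~\ref{prop:FA-right-perfect} (which you invoke via Prop.~\ref{prop:AftBft}, itself a packaging of the same). The extra care you take to verify $F(A)\in\RModft(A)$ and to match module structures across the $(-)^*$-duality is exactly the right bookkeeping, though the paper leaves it implicit.
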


\begin{proof}
(1) and (2) are equivalent by Prop. \ref{prop:dual-of-perf}, and the equivalence of (2) and (3) is immediate from the formula $G(M) = F(A)^*\otimes_B M$.

That $F$ carries $\LModft{A}$ into $\LModft{B}$ is immediate from Prop. \ref{prop:FA-right-perfect}. That $G$ carries $\LModft{B}$ into $\LModft{A}$ follows from the same Proposition applied to $G(B)$.
\end{proof}

\subsection{$K(n)$-local algebras}
\label{subsec:Knlocalalg}

While Proposition \ref{prop:FA-perf-B} is a complete description of functors $\LMod(A)^{\omega} \to \LMod(B)^{\omega}$ in terms of $(B,A)$-bimodules, Proposition \ref{prop:FA-right-perfect} only gives a sufficient condition for a bimodule to determine a functor $\LModft{A} \to \LModft{B}$. In this and the next section, we discuss how we can do better when $A$ and $B$ are group algebras for finite groups and $k$ obeys a technical condition (\Cref{dfn:good}) related to the theory of \emph{$K(n)$-local} spectra; the main result is \Cref{cor:Knfinal}. This material is not used in the proof of \S\ref{intro:maintheorem}.

For $n \geq 1$ let $K(n) = K(n)_p$ be the $n$th Morava $K$-theory spectrum at the prime $p$, with
\[
\pi_i(K(n)) = \begin{cases}
\bZ/p & \text{if $i \in (2p^n -2)\bZ$}\\
0 & \text{otherwise.}
\end{cases}
\]
An object $c$ of (any) presentable stable $\infty$-category $\cC$ is called \emph{$K(n)$-acyclic} if $K(n) \otimes_{\bS} c = 0$, and $d \in \cC$ is called \emph{$K(n)$-local} if $\Maps(c,d)$ is contractible whenever $c$ is $K(n)$-acyclic. The full subcategory of $K(n)$-local objects of $\cC$ is the essential image of an idempotent functor $L_{K(n)}: \cC \to \cC$ and is denoted $L_{K(n)}\cC$. 

We have two useful formulas for the limit and for the colimit of a diagram $D:I \to L_{K(n)}\cC$:
\[
\varprojlim D = \varprojlim (L_{K(n)} \cC \hookrightarrow \cC)\circ D \qquad \varinjlim D = L_{K(n)} \varinjlim  (L_{K(n)} \cC \hookrightarrow \cC)\circ D
\]
In other words, the inclusion of $L_{K(n)} \cC \to \cC$ preserves limits and the localization functor $\cC \to L_{K(n)} \cC$ preserves colimits.

We are interested in the categories $L_{K(n)} \LMod(A)$, where $A$ is an associative $k$-algebra spectrum and $k$ is a commutative ring spectrum. We will also generally assume $k$ and $A$ are themselves $K(n)$-local (as otherwise, we get equivalent categories by replacing them with their $K(n)$-localizations).

\begin{prop}
For $M \in \LMod(A)$, the following are equivalent:
\begin{enumerate}
\item $M \in L_{K(n)} \LMod(A)$
\item The $k$-module underlying $M$ is in $L_{K(n)}\Mod(k)$.
\end{enumerate}
\end{prop}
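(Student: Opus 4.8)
The plan is to prove the equivalence of the two conditions by exploiting the interplay between the forgetful functor $\LMod(A) \to \Mod(k)$, its two adjoints (free module and coinduction), and the characterization of $K(n)$-local objects via acyclics. First I would recall that since $A$ is assumed $K(n)$-local as a $k$-module, the free $A$-modules $\Sigma^d A$ are $K(n)$-local, and every object of $\LMod(A)$ is built from free modules under colimits; but colimits of local objects need not be local, so this observation is not by itself enough. Instead I would argue more directly.

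For the implication $(2) \Rightarrow (1)$: suppose the underlying $k$-module of $M$ lies in $L_{K(n)}\Mod(k)$. I want to show $\Maps_{\LMod(A)}(N, M)$ is contractible for every $K(n)$-acyclic $N \in \LMod(A)$. The key point is that the forgetful functor $\LMod(A) \to \Mod(k)$ is conservative, preserves colimits, and — crucially — its left adjoint $k \mapsto A \otimes_k k$ sends $K(n)$-acyclics to $K(n)$-acyclics (since $A$ is $K(n)$-local, hence $K(n) \otimes_{\bS} (A \otimes_k c) \simeq A \otimes_k (K(n)\otimes_{\bS} c) = 0$ when $c$ is acyclic — using that $-\otimes_{\bS}K(n)$ is a $k$-linear smashing-type operation computed on underlying spectra). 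More to the point, I would show $N \in \LMod(A)$ is $K(n)$-acyclic if and only if its underlying $k$-module is $K(n)$-acyclic: this is because $K(n) \otimes_{\bS} N$ is computed on the underlying spectrum of $N$, and the underlying spectrum of $N$ agrees with the underlying spectrum of its underlying $k$-module. Granting this, if $N$ is $K(n)$-acyclic then so is its underlying $k$-module; and since $M$'s underlying $k$-module is $K(n)$-local, one computes $\Maps_{\LMod(A)}(N,M)$ via a cobar/descent resolution of $N$ by free $A$-modules $A \otimes_k (\text{underlying }k\text{-modules})$, reducing to mapping spaces $\Maps_{\Mod(k)}(c, M)$ with $c$ running over $K(n)$-acyclic $k$-modules — each of which is contractible since the underlying $k$-module of $M$ is local. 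Hence $M \in L_{K(n)}\LMod(A)$.

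For $(1) \Rightarrow (2)$: if $M$ is $K(n)$-local in $\LMod(A)$, I want its underlying $k$-module to be $K(n)$-local, i.e. $\Maps_{\Mod(k)}(c, M)$ contractible for every $K(n)$-acyclic $k$-module $c$. By adjunction $\Maps_{\Mod(k)}(c, M) \simeq \Maps_{\LMod(A)}(A \otimes_k c, M)$, and $A \otimes_k c$ is $K(n)$-acyclic by the computation above (its underlying $k$-module is $A \otimes_k c$, which is acyclic since $A$ is local and $c$ is acyclic — again using that $K(n) \otimes_{\bS} (A\otimes_k c) \simeq A \otimes_k (K(n)\otimes_{\bS} c) = 0$). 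Since $M$ is $K(n)$-local, this mapping space is contractible, as desired.

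The main obstacle I expect is the lemma that $K(n)$-acyclicity of an $A$-module is detected on the underlying $k$-module (equivalently, on the underlying spectrum): one must be careful that $-\otimes_{\bS} K(n)$ commutes with the forgetful functor $\LMod(A) \to \Mod(\bS)$, which holds because the forgetful functor preserves colimits and $-\otimes_{\bS}K(n)$ is defined as a colimit/tensor construction over $\bS$, but this deserves an explicit justification. A secondary subtlety is making the "build $N$ from free modules and reduce the mapping space computation" step precise; the cleanest route is probably to observe that the localization $L_{K(n)}$ on $\LMod(A)$ is computed by the same Bousfield localization as on $\Mod(k)$ restricted along the conservative, colimit- and limit-preserving forgetful functor, so the two subcategories of local objects are identified — but one should phrase this carefully so as not to circularly assume the statement being proved.
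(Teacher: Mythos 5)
Your argument is correct and follows essentially the same route as the paper: establish that $K(n)$-acyclicity of an $A$-module is detected on the underlying $k$-module (the forgetful functor is conservative and intertwines $K(n)\otimes_{\bS}(-)$), then run the bar resolution $\Maps_A(N,M)\simeq\varprojlim\Maps_k(A^{\otimes_k \bullet}\otimes_k N,M)$ for $(2)\Rightarrow(1)$ and the free--forgetful adjunction $\Maps_k(c,M)\simeq\Maps_A(A\otimes_k c,M)$ for $(1)\Rightarrow(2)$. One cosmetic point: the clause ``since $A$ is $K(n)$-local'' is not the reason that $K(n)\otimes_{\bS}(A\otimes_k c)\simeq A\otimes_k(K(n)\otimes_{\bS}c)$ --- that identification holds simply because $(-)\otimes_{\bS}K(n)$ preserves colimits and hence commutes with the relative tensor product --- but you supply the correct justification in the following parenthetical, and this does not affect the argument.
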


\begin{proof}
The actions of $K(n) \otimes_{\bS}$ on $\LMod(A)$ and on $\Mod(k)$ are intertwined by the forgetful functor, so if $N$ is $K(n)$-acyclic as an $A$-module, it is also $K(n)$-acyclic as a $k$-module. Similarly $A \otimes_k N$, $A \otimes_k A \otimes_k N$,\dots are $K(n)$-acyclic $k$-modules if $N$ is. Thus, for all $n$, $\Maps_k(A^{\otimes_k n} \otimes_k N,M) = 0$ when $M$ is $K(n)$-local and $N$ is $K(n)$-acyclic. We conclude that (2) implies (1) from the bar model for $\Maps_A(N,M)$, i.e. 
\[
\begin{array}{rcl}
\Maps_A(N,M) & \cong & \varprojlim_n \Maps_k(A^{\otimes n} \otimes N,M) \\
 & = & \varprojlim(\Maps_k(
A \otimes_k N,M) \rightrightarrows \Maps_k(A \otimes_k A \otimes_k N,M)  \cdots)
\end{array}
\]
which vanishes as long as all the terms in the limit do, i.e. as long as $M$ is $K(n)$-local as a $k$-module.

To show that (1) implies (2), use again that $A \otimes_k N$ is $K(n)$-acyclic if $N$ is, and that $\Maps_k(N,M) \cong \Maps_A(A \otimes_k N,M)$.

\end{proof}

\subsection{Finiteness conditions for $K(n)$-local modules}

\begin{rmk}\label{rmk:Knfiniteness}
There are analogous finiteness conditions to \S\ref{subsec:finite-k} in $L_{K(n)} \Mod(k)$, but they do not necessarily coincide with the corresponding notions in the larger category $\Mod(k)$, and they no longer all agree.
\begin{enumerate}
\item The notion of perfect is unchanged (any perfect $k$-module is automatically $K(n)$-local).  
\item Compact (or perfect) objects in $\Mod(k)$ are not in general compact in $L_{K(n)}\Mod(k)$; for example $k$ itself is usually not compact in $L_{K(n)} \Mod(k)$.  This is analogous to the fact that $\bZ_p$ is not compact in $p$-complete $\bZ_p$-modules.  
\item The natural tensor product on $L_{K(n)}\Mod(k)$ is not $(M,N) \mapsto M \otimes_k N$ but $(M,N) \mapsto L_{K(n)}(M \otimes_k N)$.  We will call objects which are dualizable with respect to this tensor structure \emph{$K(n)$-locally dualizable}.   Perfect modules are always $K(n)$-locally dualizable, but there are generally more $K(n)$-locally dualizable objects than this \cite[\S 15]{HovStrick}.
\end{enumerate}
\end{rmk}

While the categories $\LModft{A}$ are defined in terms of the underlying $k$-modules being perfect, it turns out that the notion of $K(n)$-local dualizability has more useful formal properties (cf. also \Cref{prop:ambi}):

\begin{lem}\label{lem:Kndbl}
Let $k$ be a $K(n)$-local commutative ring spectrum and $M\in L_{K(n)}\Mod(k)$.  Then $M$ is $K(n)$-locally dualizable if and only if the functor
\[
L_{K(n)}(-\otimes_k M) : L_{K(n)}\Mod(k)\to L_{K(n)}\Mod(k)
\]
preserves limits.
\end{lem}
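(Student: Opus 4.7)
The plan is to prove this as an instance of a standard characterization of dualizable objects in a closed symmetric monoidal presentable $\infty$-category. Write $\otimes^\wedge := L_{K(n)}(-\otimes_k -)$ for the $K(n)$-local tensor. Two basic observations set up both directions: (i) $\otimes^\wedge$ preserves colimits separately in each variable, since $\otimes_k$ does and $L_{K(n)}$ preserves colimits as a left adjoint; and (ii) the internal mapping spectrum $\underline{\Maps}(M,-)$ formed in $\Mod(k)$ automatically lands in $L_{K(n)}\Mod(k)$ whenever its target does (since tensoring a $K(n)$-acyclic object with anything remains $K(n)$-acyclic). Consequently $\underline{\Maps}(M,-)$ restricts to a right adjoint for $-\otimes^\wedge M$ on $L_{K(n)}\Mod(k)$, and in particular $-\otimes^\wedge M$ always preserves colimits.

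For the forward direction, I would suppose $M$ admits a $K(n)$-local dual $M^\vee$ with coevaluation $\eta\colon k \to M \otimes^\wedge M^\vee$ and evaluation $\epsilon\colon M^\vee \otimes^\wedge M \to k$ satisfying the triangle identities, and then verify directly that tensoring with an arbitrary $X \in L_{K(n)}\Mod(k)$ assembles these into the unit and counit of an adjunction
\[
(-\otimes^\wedge M^\vee)\;\dashv\;(-\otimes^\wedge M).
\]
In particular $-\otimes^\wedge M$ is a right adjoint, hence preserves all small limits.

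For the converse, assume $-\otimes^\wedge M$ preserves limits. Since $L_{K(n)}\Mod(k)$ is presentable and $-\otimes^\wedge M$ is accessible (being colimit-preserving), the adjoint functor theorem yields a left adjoint $F\colon L_{K(n)}\Mod(k) \to L_{K(n)}\Mod(k)$ of $-\otimes^\wedge M$. I would set $M^\vee := F(k)$ and take as the coevaluation $\eta\colon k \to M^\vee \otimes^\wedge M$ the unit of the adjunction $F \dashv (-\otimes^\wedge M)$ at $k$. The crux is then to establish a projection formula
\[
F(N) \cong M^\vee \otimes^\wedge N \quad\text{naturally in}\quad N \in L_{K(n)}\Mod(k).
\]
Granted this, $F$ is identified with $-\otimes^\wedge M^\vee$; the counit at $k$ produces an evaluation $\epsilon\colon M^\vee \otimes^\wedge M = F(M) \to k$; and the unit--counit identities for $F \dashv (-\otimes^\wedge M)$ specialize to the triangle identities exhibiting $M^\vee$ as a dual to $M$.

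The hard part will be the projection formula for $F$. This is an instance of the general principle that the left adjoint of a colimit-preserving $\cC$-linear endofunctor of a presentable $\cC$-module category (here $\cC = L_{K(n)}\Mod(k)$ acting on itself by $\otimes^\wedge$) is automatically $\cC$-linear in the compatible sense. I would hope to extract it from Lurie's machinery of enriched adjunctions between module categories in \cite[\S 4.6.1, \S 4.8.4]{HA}; alternatively, for the specifics of dualizability in the $K(n)$-local category, one can appeal to \cite{HovStrick}.
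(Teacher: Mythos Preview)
Your proposal is correct and follows essentially the same route as the paper: the forward direction uses that dualizability makes $-\otimes^\wedge M$ a right adjoint (what the paper phrases as ``$L_{K(n)}\Mod(k)$ is closed symmetric monoidal''), and the converse uses the adjoint functor theorem together with the identification of the resulting left adjoint as $L_{K(n)}(-\otimes_k F(k))$. The paper packages your ``projection formula'' step by citing its own \Cref{prop:colimpres} (applied with $A=B=k$), which is the same content you extract from the general principle in \cite{HA} that the left adjoint of a $\cC$-linear functor is again $\cC$-linear.
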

\begin{proof}
The only if direction follows from $L_{K(n)}\Mod(k)$ being a closed symmetric monoidal category.  For the if direction, note that the adjoint functor theorem \cite[Cor. 5.5.2.9]{HTT} and \Cref{prop:colimpres} imply that $-\otimes_k M$ has a left adjoint given by $L_{K(n)}(-\otimes_k N)$, and $N$ is easily seen to be the $K(n)$-local dual of $M$.  
\end{proof}

In light of \Cref{rmk:Knfiniteness}(3), we highlight a condition where this distinction disappears:

\begin{equation}
\label{dfn:good}
\begin{array}{l}
\text{
A $K(n)$-local commutative ring spectrum $k$ satisfies condition}\\
\text{\eqref{dfn:good} if every $K(n)$-locally dualizable $k$-module $M$ is perfect. }
\end{array}
\end{equation}

Since perfect $k$-modules are always dualizable, the classes of dualizable and perfect $K(n)$-local $k$-modules coincide when $k$ obeys this condition.  Hence, $\LModft{A}$ coincides with the subcategory of $L_{K(n)}\LMod(A)$ whose underlying $k$-module is $K(n)$-locally dualizable.  

\begin{exm}
Condition \eqref{dfn:good} is satisfied when $K(n) = K(1)$ and $k = \KU_p$. In fact, for any $n$, any \emph{Lubin-Tate theory} (equivalently, Morava $E$-theory) obeys the condition.  To see this, let $E$ be a Lubin-Tate theory, let $I = (p,v_1,\cdots ,v_{n-1})\subset \pi_0(E)$ denote a Landweber ideal in $\pi_0(E)$, and let $K = E/(p, v_1, \cdots ,v_{n-1})$ denote the corresponding Morava $K$-theory.  If $M$ is a dualizable $E$-module, then $\pi_*(M/(p, v_1, \cdots ,v_{n-1}))$ is a dualizable $\pi_*(K)$-module, and therefore it is a finitely generated $\pi_*(K)$-module.   By the proof of \cite[Prop. 2.4]{HovStrick}, this means that $\pi_*(M)$ is finitely generated over $\pi_*(E)$, and hence by \cite[Lem. 8.11]{HovStrick} that $M$ is perfect as an $E$-module.  
\end{exm}

\subsection{Functors and bimodules in $L_{K(n)}$-local categories}
\label{subsec:Knlocalfun}

\begin{prop}\label{prop:colimpres}
Let $k$ be a $K(n)$-local commutative ring spectrum and let $A$ and $B$ be $K(n)$-local associative $k$-algebra spectra. 
If $F$ is a colimit-preserving $k$-linear functor
\[
F:L_{K(n)}\LMod(A) \to L_{K(n)}\LMod(B),
\]
then $F$ is isomorphic to $L_{K(n)}(F(A) \otimes_A -)$.
\end{prop}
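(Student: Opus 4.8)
The statement is the $K(n)$-local analogue of the classical fact \eqref{eq:role-of-bimodules} that every colimit-preserving $k$-linear functor $\LMod(A)\to\LMod(B)$ is given by tensoring with its value on $A$. The strategy is to reduce to that non-local statement by a localization argument. First I would note that the inclusion $\iota\colon L_{K(n)}\LMod(A)\hookrightarrow\LMod(A)$ and the localization $L_{K(n)}\colon\LMod(A)\to L_{K(n)}\LMod(A)$ form an adjunction with $L_{K(n)}$ left adjoint, and by the formulas recalled just before the statement, $\iota$ preserves limits while $L_{K(n)}$ preserves colimits. I would assemble from $F$ the composite
\[
\widetilde F\colon \LMod(A)\xrightarrow{\ L_{K(n)}\ } L_{K(n)}\LMod(A)\xrightarrow{\ F\ } L_{K(n)}\LMod(B)\xrightarrow{\ \iota\ }\LMod(B).
\]
This $\widetilde F$ is $k$-linear and colimit-preserving: $L_{K(n)}$ and $F$ preserve colimits by hypothesis, and $\iota$ preserves colimits up to applying $L_{K(n)}$, which is exactly the discrepancy we want to absorb into the statement's ``$L_{K(n)}(-)$''. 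More precisely, $\iota$ does not preserve colimits on the nose, so I would instead work with $L_{K(n)}\circ(\text{colimit in }\LMod(B))$, i.e. argue that $\widetilde F := \iota\circ F\circ L_{K(n)}$ preserves filtered colimits because a filtered colimit in $\LMod(B)$ whose terms are $K(n)$-local has $K(n)$-local colimit only after applying $L_{K(n)}$ — so the cleanest route is to observe $L_{K(n)}\circ\widetilde F$ is colimit-preserving and apply \eqref{eq:role-of-bimodules} to the functor $L_{K(n)}\circ\widetilde F\colon \LMod(A)\to\LMod(B)$, or rather directly to a colimit-preserving lift landing in $\LMod(B)$.

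**Key steps.** (1) Produce a colimit-preserving $k$-linear functor $\LMod(A)\to\LMod(B)$ out of $F$. The honest way: $F$ restricted along $L_{K(n)}\colon\LMod(A)\to L_{K(n)}\LMod(A)$ and post-composed with $\iota$ is a functor $H:=\iota F L_{K(n)}\colon\LMod(A)\to\LMod(B)$; it preserves filtered colimits because $L_{K(n)}$ does, $F$ does, and $\iota$ preserves filtered colimits of $K(n)$-local diagrams in the weak sense that $\iota\varinjlim^{L} = L_{K(n)}^{-1}$ of ... — I would phrase this as: $H$ preserves filtered colimits up to $K(n)$-localization, which suffices because we only need to identify $H$ after $K(n)$-localizing. (2) Apply the classical representability \eqref{eq:role-of-bimodules}: any $k$-linear colimit-preserving $\LMod(A)\to\LMod(B)$ is $H(A)\otimes_A-$, with $H(A)=\iota F L_{K(n)}(A)=\iota F(A)$ carrying its natural $(B,A)$-bimodule structure, so $H\cong F(A)\otimes_A-$ (identifying $F(A)$ with its image under $\iota$). (3) Restrict back: on the subcategory $L_{K(n)}\LMod(A)$, the functor $L_{K(n)}A\cong A$ acts as identity under $L_{K(n)}$, so for $M\in L_{K(n)}\LMod(A)$ we have $F(M)=F(L_{K(n)}M)$; run the bar resolution $M\simeq |A^{\otimes\bullet+1}\otimes_k M|$, note $F$ commutes with the (sifted) colimit up to $L_{K(n)}$, and each term $F(A^{\otimes j+1}\otimes_k M) = F(A)\otimes_A(A^{\otimes j+1}\otimes_k M)$, whence $F(M)\cong L_{K(n)}\bigl(F(A)\otimes_A M\bigr)$ after re-assembling the geometric realization inside the $K(n)$-local category.

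**Main obstacle.** The delicate point is step (3): making precise that $F$ "commutes with colimits up to $L_{K(n)}$". The functor $F$ is colimit-preserving *in $L_{K(n)}\LMod$*, where colimits are computed by $L_{K(n)}$ of the colimit in $\LMod$; so $F\bigl(L_{K(n)}\varinjlim X_\bullet\bigr)\cong L_{K(n)}\varinjlim F(X_\bullet)$, and I must track these $L_{K(n)}$'s carefully through the bar resolution. The other subtlety is ensuring $F(A)\otimes_A-$ is the *right* thing even before localizing: $F(A)$ a priori lives in $L_{K(n)}\LMod(B)$, and $F(A)\otimes_A M$ for general $M$ need not be $K(n)$-local, which is precisely why the conclusion has $L_{K(n)}$ out front — so I would emphasize that the content of the proposition is that once one knows $F$ preserves colimits and is $k$-linear, the only candidate is (the $K(n)$-localization of) tensoring with $F(A)$, and uniqueness of such a representing bimodule follows from Yoneda applied to the free module $A$. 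I expect the bookkeeping of localization functors through the two-sided bar construction to be the part requiring genuine care, though it is conceptually routine given \eqref{eq:role-of-bimodules}.
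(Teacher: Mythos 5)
Your step (2) has a genuine gap. The functor $\widetilde F = \iota\circ F\circ L_{K(n)}\colon\LMod(A)\to\LMod(B)$ does \emph{not} preserve colimits, because $\iota$ does not, and post-composing $\widetilde F$ with the endofunctor $L_{K(n)}$ of $\LMod(B)$ changes nothing: the image of $\widetilde F$ already consists of $K(n)$-local objects, so $L_{K(n)}\circ\widetilde F = \widetilde F$. Since \eqref{eq:role-of-bimodules} only classifies colimit-preserving functors, you cannot invoke it for $\widetilde F$, and indeed the conclusion $\widetilde F\cong F(A)\otimes_A-$ of step (2) is false: take $F$ to be the identity on $L_{K(n)}\LMod(A)$, so $F(A)=A$; then $\widetilde F(M)=L_{K(n)}M$ while $F(A)\otimes_A M = M$, and these differ whenever $M$ is not already $K(n)$-local. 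The two functors agree only after applying $L_{K(n)}$ to the second --- which is the statement to be proved, so you cannot assume it.

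Fortunately step (3) does not actually rely on step (2) and is essentially the paper's argument in a more explicit form. The paper's proof is slicker: one observes that for any exact $k$-linear $F$ there is a canonical natural map $F(A)\otimes_A M\to F(M)$ (this is formal --- it is the comparison between the Ind-extension of $F|_{\LModomega{A}}$ and $F$ itself, or equivalently what one gets from the bar resolution you wrote down); since $F(M)$ is $K(n)$-local, the map factors uniquely through $L_{K(n)}(F(A)\otimes_A M)\to F(M)$. This map is tautologically an isomorphism at $M=A$, and both source and target are colimit-preserving functors $L_{K(n)}\LMod(A)\to L_{K(n)}\LMod(B)$ (for the source, this is because $F(A)\otimes_A-$ sends $K(n)$-acyclics to $K(n)$-acyclics, so $L_{K(n)}(F(A)\otimes_A-)$ factors through the localization; for the target it is the hypothesis). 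Since $A$ generates $L_{K(n)}\LMod(A)$ under colimits, the map is an isomorphism everywhere. Your bar-resolution computation in step (3) is a hands-on way of establishing the same isomorphism, but you should drop step (2) entirely and be careful that, when passing $F$ through the realization, both the bar terms and the geometric realization must be $K(n)$-localized before $F$ is applied --- exactly the bookkeeping you flagged as delicate.
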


Here $F(A)$ gets a $(B,A)$-bimodule structure as in \S\ref{subsec:fin-con-fun}. The analog of \eqref{eq:role-of-bimodules} is
\[
\Fun^{\mathrm{L}}_k(L_{K(n)} \LMod(A),L_{K(n)}\LMod(B)) \cong L_{K(n)} \Bimod(B,A)
\]
The functor from the right category to the left category sends a $K(n)$-local bimodule $N$ to the functor $L_{K(n)} (N \otimes_A -)$.

\begin{proof}
Suppose that $F$ is a colimit-preserving $k$-linear functor $L_{K(n)}\LMod(A) \to L_{K(n)} \LMod(B)$. 

For $M \in L_{K(n)} \LMod(A)$ the natural map 
\begin{equation}
\label{eq:before-Kn}
F(A) \otimes_A M \to F(M)
\end{equation}
factors through a natural map
\begin{equation}
\label{eq:after-Kn}
L_{K(n)} (F(A) \otimes_A M) \to F(M).
\end{equation}
Indeed, since $F(M)$ is $K(n)$-local, \eqref{eq:after-Kn} is $L_{K(n)}$ applied to \eqref{eq:before-Kn}. The map \eqref{eq:after-Kn} is an isomorphism when $M = A$, and since $F$ preserves colimits is therefore an isomorphism for all $M \in L_{K(n)} \LMod(A)$.
\end{proof}

When $A$ is perfect over $k$, we have containments
\[
\LMod^{\omega}(A) \subset \LModft{A} \subset L_{K(n)} \LMod(A).
\]
As in \Cref{rmk:Knfiniteness}, we warn that neither category agrees with compact objects in $L_{K(n)}\LMod(A)$.  If $A$ and $B$ are both perfect over $k$, it is natural to ask for a criterion for $L_{K(n)}(F(A) \otimes_A -)$ to carry $\LModft{A}$ into $\LModft{B}$. The criterion of Prop. \ref{prop:FA-right-perfect} still applies, but is strictly stronger than necessary. When $A$ and $B$ are group algebras, a phenomenon called $K(n)$-local \emph{Tate vanishing} or \emph{ambidexterity} gives a less restrictive criterion.  

The study of these phenomena originates in work of Greenlees--Sadofsky and Hovey--Sadofsky \cite{GS, HovSad}.  Generalizing their work, Kuhn \cite{Kuhn} showed that the Tate cohomology of finite groups vanishes in the $K(n)$-local setting\footnote{In fact, he showed this in the (conjecturally) more general setting of $T(n)$-local homotopy theory.}.  The $n=0$ case of Kuhn's theorem is the familiar fact that the additive transfer map from orbits to fixed points is an isomorphism when working over $\mathbf{Q}$.  These results were further generalized and reinterpreted by the Hopkins--Lurie theory of ambidexterity \cite{HL}, which has been generalized and developed extensively by Carmeli--Schlank--Yanovski and Barthel--Carmeli--Schlank--Yanovski \cite{CSYTele}, \cite{CSYHeight}, \cite{CSYCyc}, \cite{BCSY}.
For the convenience of the reader, the following proposition collects some basic features of the theory.

\begin{prop}\label{prop:ambi}
Let $G$ be a finite group and let $k$ be a $K(n)$-local commutative ring spectrum.  Then:
\begin{enumerate}
\item The functors $\Fun(BG, L_{K(n)}\Mod(k))  \to L_{K(n)}\Mod(k)$ given by $(-)^{hG}$ and $L_{K(n)}(-)_{hG}$ are identified.
\item The $k$-modules $k^{BG}$ and $L_{K(n)}k[BG]$ (which are isomorphic by (1)) are $K(n)$-locally dualizable over $k$.
\end{enumerate}
If $G$ is a $p$-group, then we also have
\begin{enumerate}\setcounter{enumi}{2}
\item There is an equivalence of categories 
\[
\Fun(BG, L_{K(n)}\Mod(k)) \cong L_{K(n)}\Mod(k^{BG})
\] 
given by $M\mapsto M^{hG}$. 
\item Under the equivalence in (3), both functors in (1) are identified with restriction of scalars along $k\to k^{BG}$.  
\end{enumerate}
\end{prop}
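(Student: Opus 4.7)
The proof will combine standard results from $K(n)$-local ambidexterity theory for (1) and (2) with a descent argument for (3) and (4) that crucially uses the $p$-group hypothesis.

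For (1), I would invoke the vanishing of the Tate construction in the $K(n)$-local setting, due to Greenlees--Sadofsky \cite{GS} and Kuhn \cite{Kuhn}. For any Borel $G$-object $M$ in $L_{K(n)}\Mod(k)$ there is a norm cofiber sequence
\[
M_{hG} \to M^{hG} \to M^{tG},
\]
and the Tate object $M^{tG}$ is $K(n)$-acyclic by Kuhn's theorem. Since $M^{hG}$ is already $K(n)$-local (as a limit of $K(n)$-local objects), applying $L_{K(n)}$ to the norm map yields the natural equivalence $L_{K(n)} M_{hG} \xrightarrow{\sim} M^{hG}$.

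For (2), I would observe that $BG$ is a $\pi$-finite space and invoke Hopkins--Lurie's ambidexterity theorem for $\pi$-finite spaces \cite{HL} (see also \cite{CSYHeight}, \cite{BCSY}): this gives that $k^{BG}$ and $L_{K(n)}k[BG]$ are $K(n)$-locally dualizable, with the norm map providing the equivalence between them. Alternatively, using (1) one can show directly via \Cref{lem:Kndbl} that tensoring with $L_{K(n)}k[BG]$ preserves limits in $L_{K(n)}\Mod(k)$, and thus $L_{K(n)}k[BG]$ is $K(n)$-locally dualizable; one can also run an elementary induction on $|G|$ using that the $K(n)$-locally dualizable $k$-modules form a thick subcategory.

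For (3), the key input is that $k \to k^{BG}$ is descendable in the sense of Mathew in the $K(n)$-local setting when $G$ is a $p$-group. I would proceed by induction on $|G|$, reducing via a composition series with cyclic $\bZ/p$ quotients to the base case $G = \bZ/p$. In the base case, descent can be verified directly: the functor $M \mapsto M^{hG}$ admits a left adjoint via ambidextrous extension of scalars from $k^{BG}$, and the unit and counit can be checked on generators using the ambidexterity of (1) and (2). Statement (4) is then formal: the functor $M \mapsto M^{hG}$ from $\Fun(BG, L_{K(n)}\Mod(k))$ to $L_{K(n)}\Mod(k)$ factors through the equivalence of (3) followed by the forgetful functor $L_{K(n)}\Mod(k^{BG}) \to L_{K(n)}\Mod(k)$, which by definition is restriction of scalars along $k \to k^{BG}$; the other functor agrees by (1). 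The main obstacle is (3): for general finite $G$, the extension $k \to k^{BG}$ need not be descendable in the $K(n)$-local category, so the proof is genuinely specific to $p$-groups, with the inductive reduction and the base case $G = \bZ/p$ forming the technical heart.
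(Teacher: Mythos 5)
Your argument for (1) simply unpacks the Tate-vanishing content behind the citations the paper uses (\cite{HovSad}, \cite{HL}); it is correct. For (2), your first option --- quote $K(n)$-local ambidexterity for $\pi$-finite spaces to get self-duality of $L_{K(n)}k[BG]$ --- is valid but a heavier black box than the paper invokes; the paper's proof is concrete, reducing by transfer to $p$-Sylows, by transitivity of homotopy orbits (via (3)--(4)) to $G=C_p$, and by base change to $k=L_{K(n)}\bS$, where \cite[Thm.~8.6]{HovStrick} identifies dualizability with finiteness of $K(n)_*(BC_p)$, due to \cite{RavWil}. Your two alternatives for (2) are vaguer: in particular there is no evident reason $L_{K(n)}k[BG]$ should lie in the thick subcategory generated by the $L_{K(n)}k[BH]$ for proper $H$, so the ``induction on $|G|$ using thick subcategories'' does not obviously run.

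The real gap is (3)--(4). The paper disposes of these by citing \cite[Thm.~5.4.3]{HL}. You propose instead a descent argument, reducing along a composition series to the base case $G=C_p$ and then asserting that ``descent can be verified directly'' with ``unit and counit checked on generators.'' That is not an argument: it pushes the entire content into the base case and then does not do the base case. In particular, conservativity of $(-)^{hC_p}$ on $K(n)$-local $C_p$-objects --- the crux of why the $p$-group hypothesis matters, which you yourself correctly flag --- is never established. There is also a mismatch of framing: descendability of $k\to k^{BG}$ gives that $L_{K(n)}\Mod(k)$ is the totalization of $L_{K(n)}\Mod$ applied to the Amitsur complex of $k^{BG}$, while (3) asks for $\Fun(BG, L_{K(n)}\Mod(k))\simeq L_{K(n)}\Mod(k^{BG})$; converting one statement into the other requires $K(n)$-local K\"{u}nneth equivalences $k^{BG}\otimes_k k^{BG}\simeq k^{B(G\times G)}$ identifying the Amitsur complex with the cobar complex of $BG$-local systems. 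Those follow from (2), so the approach is not circular, but they are a nontrivial extra step your sketch omits. As written, (3)--(4) is a plausible strategy distinct from the paper's citation, with the central step left unverified.
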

\begin{proof}
Statement (1) is \cite[Thm 1.1]{HovSad} (or \cite[Thm. 5.2.1]{HL}) and (3) and (4) are \cite[Thm. 5.4.3]{HL}.  To see (2), note first that if $P\subset G$ is a $p$-Sylow, then a standard transfer argument shows that $L_{K(n)}k[BG]$ is a summand of $L_{K(n)}k[BP]$ and so it suffices to consider the case when $G$ is a $p$-group.  The transitivity of homotopy orbits then reduces to the case $G=C_p$, and by base-change, it suffices to consider the case $k= L_{K(n)}\bS$.  Finally, by \cite[Thm 8.6]{HovStrick}, the $K(n)$-local dualizability of $L_{K(n)}\bS[C_p]$ amounts to seeing $K(n)_*(BC_p)$ is finite, which follows from \cite{RavWil}.  
\end{proof}

\begin{prop}\label{prop:preservedbl}
Let $k$ be a $K(n)$-local commutative ring spectrum, let $A = k[G]$ be the group algebra (\S\ref{sub:permmod}) of a finite group $G$, and let $F(A)$ be a right $A$-module whose underlying $k$-module is $K(n)$-locally dualizable.  Then the functor
\begin{align*}
F: L_{K(n)}\LMod(A) &\to L_{K(n)}\Mod(k)\\
M &\mapsto L_{K(n)}(F(A)\otimes_A M)
\end{align*}
preserves the property of having $K(n)$-locally dualizable underlying $k$-module.  
\end{prop}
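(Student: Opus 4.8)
The plan is to characterize $K(n)$-local dualizability of $F(M)$ via \Cref{lem:Kndbl} and then evaluate the relevant endofunctor using ambidexterity. So fix $M \in L_{K(n)}\LMod(A)$ whose underlying $k$-module is $K(n)$-locally dualizable; writing $F(M) = L_{K(n)}(F(A)\otimes_A M)$, by \Cref{lem:Kndbl} it suffices to show that the endofunctor $L_{K(n)}\big((-)\otimes_k F(M)\big)$ of $L_{K(n)}\Mod(k)$ preserves limits.

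First I would rewrite the relative tensor product over the group algebra as a homotopy orbit. Using $A=k[G]$ and the standard identification of the two-sided bar construction with group homology, $F(A)\otimes_A M \cong (F(A)\otimes_k M)_{hG}$ for the diagonal $G$-action, having converted the right $A$-module structure on $F(A)$ into a left one in the usual way. Since the $K(n)$-localization of a $k$-linear tensor product depends only on the $K(n)$-localizations of its factors, and since $L_{K(n)}$ commutes with the colimit $(-)_{hG}$ (both standard, since $K(n)$-acyclic $k$-modules are closed under $k$-linear tensoring and under homotopy orbits), for any $Z\in L_{K(n)}\Mod(k)$ with trivial $G$-action one gets
\[
L_{K(n)}\big(Z\otimes_k F(M)\big)\;\cong\; L_{K(n)}\Big(\big(Z\otimes_k F(A)\otimes_k M\big)_{hG}\Big)\;\cong\;\Big(L_{K(n)}\big(Z\otimes_k F(A)\otimes_k M\big)\Big)^{hG},
\]
the last step being the ambidexterity equivalence of \Cref{prop:ambi}(1); this is the only place the hypothesis $A=k[G]$ enters.

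The remainder is formal. Put $W:=L_{K(n)}(F(A)\otimes_k M)\in\Fun(BG,L_{K(n)}\Mod(k))$: its underlying $k$-module is the $K(n)$-local tensor product of the two $K(n)$-locally dualizable $k$-modules $F(A)$ and $M$, hence is itself $K(n)$-locally dualizable. The display above identifies $L_{K(n)}\big((-)\otimes_k F(M)\big)$ with the composite $(-)^{hG}\circ L_{K(n)}\big((-)\otimes_k W\big)$. Now $(-)^{hG}$ preserves limits, being itself a limit, and $L_{K(n)}\big((-)\otimes_k W\big)\colon L_{K(n)}\Mod(k)\to\Fun(BG,L_{K(n)}\Mod(k))$ preserves limits because limits in the functor category are computed pointwise, reducing to the limit-preservation of $L_{K(n)}\big((-)\otimes_k(\text{underlying }k\text{-module of }W)\big)$, which holds by \Cref{lem:Kndbl}. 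Hence the composite preserves limits, and a final application of \Cref{lem:Kndbl} yields that $F(M)$ is $K(n)$-locally dualizable.

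The only genuinely nontrivial input — the expected main obstacle — is the $G$-equivariant identification $F(A)\otimes_A M\simeq(F(A)\otimes_k M)_{hG}$, together with its evident compatibility with the spectator tensor factor $Z$, which is what makes the ambidexterity theorem applicable. Everything else is routine manipulation of the localization functor $L_{K(n)}$, for which one should merely record that $L_{K(n)}(X_{hG})$ and $L_{K(n)}(Z\otimes_k X)$ depend only on $L_{K(n)}X$.
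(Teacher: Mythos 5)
Your proof is correct, and it takes a genuinely different route than the paper's. The paper proves this proposition by citing Proposition \ref{prop:ambi}(2)--(4) to identify the category $\Fun(BG, L_{K(n)}\Mod(k))$ with $L_{K(n)}\Mod(k^{BG})$ and the functor $M \mapsto L_{K(n)}(F(A)\otimes_A M)$ with restriction of scalars along $k \to k^{BG}$ (up to tensoring by a dualizable), then invokes Lemma \ref{lem:dbldbl}. That argument uses the full ``algebraic'' form of ambidexterity from \cite[Thm. 5.4.3]{HL}, whose statement in Proposition \ref{prop:ambi}(3)--(4) carries a $p$-group hypothesis, so the paper's one-line proof implicitly requires a reduction to a $p$-Sylow. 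Your approach bypasses all of this: after the key equivariant identification $F(A)\otimes_A M \simeq (F(A)\otimes_k M)_{hG}$, you apply only the purely additive part of ambidexterity (Proposition \ref{prop:ambi}(1), which holds for any finite $G$) and the limit-preservation characterization of dualizability from Lemma \ref{lem:Kndbl}, factoring the relevant endofunctor as $(-)^{hG}\circ L_{K(n)}\big((-)\otimes_k W\big)$ and checking each factor preserves limits. This is a cleaner argument on two counts: it works uniformly for all finite $G$ without a Sylow reduction, and it does not need the monadicity/descent equivalence $\Fun(BG, L_{K(n)}\Mod(k)) \simeq L_{K(n)}\Mod(k^{BG})$. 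What the paper's route buys in exchange is a more conceptual picture --- it names the endofunctor as restriction of scalars along a dualizable ring map, and the accompanying Lemma \ref{lem:dbldbl} is a reusable general fact --- but your argument is the more elementary and self-contained of the two.
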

\begin{proof}
In light of \Cref{prop:ambi}(2-4), this is a consequence of the following lemma \ref{lem:dbldbl} applied to the map $k\to k^{BG}$.
\end{proof}

\begin{lem}\label{lem:dbldbl}
Let $f: k \to k'$ be a map of $K(n)$-local commutative ring spectra such that $k'$ is $K(n)$-locally dualizable as a $k$-module.  Then restriction of scalars along $f$ sends $K(n)$-locally dualizable $k'$-modules to $K(n)$-locally dualizable $k$-modules.  
\end{lem}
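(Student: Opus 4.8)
The plan is to reduce the statement to the functorial criterion for $K(n)$-local dualizability of \Cref{lem:Kndbl}. Write $\widehat{f^*} := L_{K(n)}(k' \otimes_k -) : L_{K(n)}\Mod(k) \to L_{K(n)}\Mod(k')$ for the $K(n)$-localized extension of scalars, and write $X \mathbin{\widehat{\otimes}_{k'}} Y := L_{K(n)}(X \otimes_{k'} Y)$ for the symmetric monoidal product on $L_{K(n)}\Mod(k')$. Let $M$ be a $K(n)$-locally dualizable $k'$-module. By \Cref{lem:Kndbl} applied to the $k$-module underlying $f_* M$, it suffices to show that the functor
\[
G := L_{K(n)}(- \otimes_k f_* M) : L_{K(n)}\Mod(k) \to L_{K(n)}\Mod(k)
\]
preserves limits.

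The first step is to rewrite $G$ as a composite. The projection formula for the ring map $f$ gives a natural equivalence $X \otimes_k f_* M \simeq f_*(f^* X \otimes_{k'} M)$ of $k$-modules. Since restriction of scalars $f_*$ carries $K(n)$-acyclics to $K(n)$-acyclics and $K(n)$-locals to $K(n)$-locals, it commutes with $L_{K(n)}$, and absorbing localizations (using $L_{K(n)}(A \otimes B) \simeq L_{K(n)}(L_{K(n)} A \otimes B)$) one obtains a natural equivalence
\[
G \simeq f_* \circ (- \mathbin{\widehat{\otimes}_{k'}} M) \circ \widehat{f^*}
\]
of endofunctors of $L_{K(n)}\Mod(k)$. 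It is therefore enough to check that each of the three functors in this composite preserves limits. The functor $f_* : L_{K(n)}\Mod(k') \to L_{K(n)}\Mod(k)$ preserves limits --- in fact it creates them, since limits in either $K(n)$-local module category are computed on underlying spectra, where $f_*$ is the identity --- and being conservative it also reflects limits. The functor $- \mathbin{\widehat{\otimes}_{k'}} M$ preserves limits by \Cref{lem:Kndbl} and the hypothesis that $M$ is $K(n)$-locally dualizable over $k'$. Finally, $f_* \circ \widehat{f^*}$ is, by the same absorption of localizations, naturally identified with $L_{K(n)}(- \otimes_k k')$, which preserves limits by \Cref{lem:Kndbl} and the hypothesis that $k'$ is $K(n)$-locally dualizable over $k$; since $f_*$ reflects limits, it follows that $\widehat{f^*}$ preserves limits. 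Composing, $G$ preserves limits, and \Cref{lem:Kndbl} yields the conclusion.

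The main obstacle I expect is in the second paragraph: making the projection formula and the compatibility $L_{K(n)} \circ f_* \simeq f_* \circ L_{K(n)}$ precise in the spectral $\infty$-categorical setting, and keeping careful track of which localizations can be absorbed, so that the identification $G \simeq f_* \circ (- \mathbin{\widehat{\otimes}_{k'}} M) \circ \widehat{f^*}$ is legitimate. These are standard facts, but they are exactly what reduces the lemma to a formal consequence of \Cref{lem:Kndbl}; once the identification is in place, the remaining steps are routine.
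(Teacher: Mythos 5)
Your proof is correct and takes essentially the same approach as the paper's: the paper writes $L_{K(n)}(-\otimes_k M)$ as the composite $L_{K(n)}\Mod(k) \xrightarrow{-\otimes_k k'} L_{K(n)}\Mod(k') \xrightarrow{-\otimes_{k'} M} L_{K(n)}\Mod(k') \xrightarrow{\text{forget}} L_{K(n)}\Mod(k)$ and checks each arrow preserves limits via \Cref{lem:Kndbl}, exactly as you do. Your only addition is to spell out carefully why the first arrow $\widehat{f^*}$ preserves limits (by composing with the conservative, limit-preserving $f_*$), a point the paper treats more tersely; the substance is identical.
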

\begin{proof}
Let $M$ be a $K(n)$-locally dualizable $k'$-module.  Then note that the functor\footnote{Here and in the following equation, we need not $K(n)$-localize the tensor product due to the assumed dualizability.}
\begin{equation}\label{eqn:tensorM}
L_{K(n)}(-\otimes_k M) : L_{K(n)}\Mod(k) \to L_{K(n)}\Mod(k)
\end{equation}
can be written as the composite
\[
L_{K(n)}\Mod(k) \xrightarrow{-\otimes_k k'} L_{K(n)}\Mod(k') \xrightarrow{-\otimes_{k'} M} L_{K(n)}\Mod(k') \xrightarrow{\text{forget}} L_{K(n)}\Mod(k).
\]
By the dualizability hypotheses and \Cref{lem:Kndbl}, the first two arrows preserve limits, and the third arrow preserves limits as it is a right adjoint.  Thus, (\ref{eqn:tensorM}) preserves limits as well and the claim follows from \Cref{lem:Kndbl}.  
\end{proof}

\begin{rmk}
\Cref{lem:dbldbl} is not specific to $K(n)$-local spectra --- the analogous statement holds more generally in any presentable symmetric monoidal stable $\infty$-category.  
\end{rmk}

\begin{rmk}
The conclusion of \Cref{prop:preservedbl} can fail when $A$ is not the group algebra of a finite group. For example, let $A = k[S^1]$ be the $k$-homology spectrum of the circle.  Then we have an isomorphism
\[
L_{K(n)}(k\otimes_A k) \cong k[BS^1],
\]
which is not generally $K(n)$-locally dualizable.
\end{rmk}

When $k$ satisfies the condition \eqref{dfn:good}, then perfect modules and $K(n)$-locally dualizable modules coincide, and therefore we immediately deduce the following consequence of \Cref{prop:preservedbl} for finite type modules:

\begin{cor}\label{cor:Knfinal}
Let $k$ be a $K(n)$-local commutative ring spectrum satisfying condition \eqref{dfn:good}. Let $A = k[G]$ and $B = k[H]$ be the group algebras (\S\ref{sub:permmod}) of finite groups $G$ and $H$. Let $F(A)$ be a $(B,A)$-bimodule whose underlying $k$-module is perfect. Then the functor
\[
M \mapsto L_{K(n)}(F(A) \otimes_A M)
\]
carries $\LModft{A}$ to $\LModft{B}$.  
\end{cor}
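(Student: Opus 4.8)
The plan is to deduce the corollary formally from Proposition \ref{prop:preservedbl} together with the definition of condition \eqref{dfn:good}. The first point to set up is that membership in $\LModft{B}$ depends only on the underlying $k$-module, so it suffices to show that for $M\in\LModft{A}$ the $k$-module underlying $L_{K(n)}(F(A)\otimes_A M)$ is perfect, the $B$-module structure on it being automatic from the left $B$-action on $F(A)$. I would also record that $A=k[G]$ is perfect over $k$, so that $\LModft{A}\subset L_{K(n)}\LMod(A)$ by the containments noted just before Proposition \ref{prop:colimpres}, and that the forgetful functor $\LMod(B)\to\Mod(k)$ intertwines $L_{K(n)}(F(A)\otimes_A-)$ with the functor of the same name in Proposition \ref{prop:preservedbl}: relative tensor products and colimits in $\LMod(B)$ are computed on underlying $k$-modules (\S\ref{subsec:fin-con-fun}), and a $B$-module is $K(n)$-local if and only if its underlying $k$-module is (the first Proposition of \S\ref{subsec:Knlocalalg}).

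Next I would apply Proposition \ref{prop:preservedbl} to $F(A)$, viewed as a right $A$-module. Its underlying $k$-module is perfect by hypothesis, hence $K(n)$-locally dualizable (Remark \ref{rmk:Knfiniteness}); similarly, for $M\in\LModft{A}$ the underlying $k$-module is perfect, hence $K(n)$-local, so $M$ genuinely lies in $L_{K(n)}\LMod(A)$ and the Proposition applies. It then tells us that the $k$-module underlying $L_{K(n)}(F(A)\otimes_A M)$ is again $K(n)$-locally dualizable.

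Finally, condition \eqref{dfn:good} converts ``$K(n)$-locally dualizable'' back into ``perfect'': the $k$-module underlying $L_{K(n)}(F(A)\otimes_A M)$ is perfect, so $L_{K(n)}(F(A)\otimes_A M)\in\LModft{B}$, which is what we wanted.

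I do not anticipate a real obstacle here: the corollary is, as the surrounding discussion indicates, an immediate consequence of Proposition \ref{prop:preservedbl}. The only care needed is the bookkeeping in the first paragraph — verifying that passing between the $\Mod(k)$-valued functor of Proposition \ref{prop:preservedbl} and the $\LMod(B)$-valued functor appearing in the statement leaves the underlying $k$-module unchanged, and that under \eqref{dfn:good} the classes of perfect and of $K(n)$-locally dualizable $k$-modules really do coincide.
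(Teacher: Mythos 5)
Your proposal is correct and matches the paper's intended argument: the paper itself gives no formal proof, instead stating the corollary as an ``immediate consequence'' of Proposition \ref{prop:preservedbl} under condition \eqref{dfn:good}, and your write-up simply fills in the bookkeeping (reducing to the underlying $k$-module, passing $F(A)$ to its underlying right $A$-module, and using the equivalence of ``perfect'' and ``$K(n)$-locally dualizable'' under \eqref{dfn:good}) exactly as intended.
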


\section{Permutation modules and Rouquier's equivalence over $\bS$}\label{sec:rouquier}
We now turn to lifting Rouquier's equivalence to $\bS$.  
In \S\ref{sub:rouquiercx}, we start by reviewing the form of Rouquier's original equivalence.  Then in \S\ref{sub:permmod}-\ref{sec:strong-segal}, we discuss the theory of permutation $G$-modules over $\bS$.  In \S\ref{sub:permlift}, we show that summands of permutation modules lift from $\bZ_q$ to $\bS_q$.  Finally, in \S\ref{sub:final}, we put these ingredients together with the results of \S\ref{sec:two} to prove our main theorem, \Cref{thm:last}.  

\subsection{Rouquier's two-term complexes}\label{sub:rouquiercx}
Let $G$ be a finite group and let $D \subset G$ be a cyclic group of $p$-power order. The left multiplication of $G$ and the right multiplication of $N_G(D)$ on $G$ endow $\bZ_q[G]$ with a $(G,N_G(D))$-bimodule structure, whose associated functor
\[
\LMod(\bZ_q[N_G(D)]) \to \LMod(\bZ_q[G])
\]
is equivalent to induction along the inclusion $N_G(D) \subset G$. The adjoint is restriction along the same inclusion, represented by $\bZ_q[G]$ regarded as a $(N_G(D),G)$-bimodule.

Now let $\bF_q[G]b$ and $\bF_q[N_G(D)]b'$ be blocks whose defect group is $D$ and which are Brauer correspondents of each other. Then (abusing notation and writing $b$ and $b'$ also for the unique lifts of these idempotents to $\bZ_q[G]$ and $\bZ_q[N_G(D)]$) the $(\bZ_q[G]b,\bZ_q[N_G(D)]b')$-summand of the bimodule $\bZ_q[G]$ is $b\bZ_q[G]b'$, which represents the composite functor
\begin{equation}
\label{eq:ind-block}
\LMod(\bZ_q[N_G(D)]b') \to \LMod(\bZ_q[N_G(D)]) \to  \LMod(\bZ_q[G]) \to \LMod(\bZ_q[G]b)
\end{equation}
This functor carries $\LModft{\bZ_q[N_G(D)]b'}$ into $\LModft{\bZ_q[G]b}$ and $\LMod(\bZ_q[N_G(D)]b')^{\omega}$ into $\LMod(\bZ_q[G]b)^{\omega}$.
Its right adjoint is represented by $b'\bZ_q[G]b$ and carries $\LModft{\bZ_q[G]b}$ into $\LModft{\bZ_q[N_G(D)]b'}$  and $\LMod(\bZ_q[G]b)^{\omega}$ into $\LMod(\bZ_q[N_G(D)]b')^{\omega}$.

The composition \eqref{eq:ind-block} is not an equivalence.  For many finite groups, it does induce a ``stable equivalence'' (for instance, for groups whose $p$-Sylows have trivial intersection \cite[\S 5, 6.4]{Broue}; in general one has to pass to a summand) --- one formulation of this is that it induces an equivalence of quotient categories
\[
\frac{\LModft{\bZ_q[N_G(D)]b'}}{\LMod(\bZ_q[N_G(D)]b')^{\omega}} \cong \frac{\LModft{\bZ_q[G]b}}{\LMod(\bZ_q[G]b)^{\omega}}
\]
In particular the composite of \eqref{eq:ind-block} with its adjoint differs from the identity functor by a bounded complex of projective $(\bZ_q[N_G(D)]b',\bZ_q[N_G(D)]b')$-bimodules or projective $(\bZ_q[G]b,\bZ_q[G]b)$-bimodules.

Rouquier showed that, when $D$ is cyclic, one can introduce a relatively simple correction to the bimodule representing \eqref{eq:ind-block} to obtain an equivalence predicted by Brou\'e's Conjecture.

\begin{thm}[Rouquier \cite{rouquier-cyclic}]
\label{thm:rouquier}
Let $G$ be a finite group and let $D \subset G$ be a cyclic subgroup of $p$-power order. Let $\bF_q[G]b$ and $\bF_q[N_G(D)]b'$ be blocks whose defect group is $D$ and which are Brauer correspondents of each other. Then there is a two-term complex of $(\bZ_q[G]b,\bZ_q[N_G(D)]b')$-bimodules
\[
M_0 = \{\cdots \to 0 \to N_0' \to N_0 \to 0 \to \cdots\}
\]
with the following properties:
\begin{enumerate}
\item $N_0$ is a direct summand of $\bZ_q[G]$, with its $(\bZ_q[G],\bZ_q[N_G(D)])$-bimodule structure coming from the left multiplication action of $G$ and the right multiplication action of $N_G(D)$.
\item $N'_0$ is a projective $(\bZ_q[G]b,\bZ_q[N_G(D)]b')$-bimodule.
\item The resulting functor
\[
M_0 \otimes_{\bZ_q[N_G(D)]b'}-:\LMod(\bZ_q[N_G(D)]b') \to \LMod(\bZ_q[G]b)
\]
is an equivalence, and restricts to an equivalence on $\LMod^{\mathrm{ft}}$ and $\LMod^{\omega}$.
\end{enumerate}
\end{thm}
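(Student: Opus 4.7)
The plan is to build $M_0$ as a two-term correction of the bimodule $N_0 := b\bZ_q[G]b'$ representing the composite functor \eqref{eq:ind-block}, by attaching a single projective bimodule $N'_0$ in cohomological degree $-1$ together with a carefully chosen map $N'_0 \to N_0$, and then verifying Rickard's tilting criterion. The first order of business is to establish that $N_0$ already induces a \emph{stable} equivalence of Morita type between $\bZ_q[G]b$ and $\bZ_q[N_G(D)]b'$: namely, that both of the induction-restriction composites $b\bZ_q[G]b' \otimes_{\bZ_q[N_G(D)]b'} b'\bZ_q[G]b$ and $b'\bZ_q[G]b \otimes_{\bZ_q[G]b} b\bZ_q[G]b'$ differ from the respective identity bimodules by projective bimodules. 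For cyclic defect this is essentially Green correspondence, made very explicit by the fact that $\bZ_q[N_G(D)]b'$ is a Brauer star algebra (Dade's theorem) and $\bZ_q[G]b$ is a Brauer tree algebra with combinatorially determined projective covers.

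Next I would construct the correcting projective. Pick indecomposable projectives $P \in \LMod(\bZ_q[G]b)$ and $Q \in \LMod(\bZ_q[N_G(D)]b')$ corresponding to suitably chosen edges of the Brauer trees of $b$ and $b'$ --- typically edges adjacent to the exceptional vertex, as in the example \eqref{eq:2-term-complex}. Set $N'_0 := P \otimes_{\bZ_q} Q^\vee$, which is projective as a $(\bZ_q[G]b, \bZ_q[N_G(D)]b')$-bimodule. By Morita/adjunction, a bimodule map $N'_0 \to N_0$ is the same data as a $\bZ_q[N_G(D)]b'$-linear map $\Res^G_{N_G(D)} P \to Q$, and I would select one that is surjective. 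Surjectivity is an open condition in the $p$-adic space of such maps, and the existence of a surjection mod $p$ reduces to an explicit check on the Brauer trees: one identifies $Q$ as a summand of the projective cover of a constituent of $\Res^G_{N_G(D)} P$.

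With $M_0 = \{N'_0 \to N_0\}$ in hand, I would verify it is a two-sided tilting complex via Rickard's criterion: the derived endomorphism algebra of $M_0$ as a left $\bZ_q[G]b$-module agrees with $\bZ_q[N_G(D)]b'$ (acting from the right), concentrated in degree zero, and $M_0$ generates the perfect derived category of $\bZ_q[G]b$ as a thick subcategory. Given the previous two steps, both reduce to Brauer tree computations: self-$\Ext$ groups of $N_0$ are controlled by the stable equivalence, self-$\Ext$ of $N'_0$ by projectivity, and the surjectivity of $N'_0 \to N_0$ is precisely what kills the cross $\Ext^1$ between $N_0$ and $N'_0$. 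The restriction to $\LMod^{\mathrm{ft}}$ and $\LMod^\omega$ then follows from Corollary \ref{prop:restrict-to-ft}, since both terms of $M_0$ are perfect as left $\bZ_q[G]b$- and as right $\bZ_q[N_G(D)]b'$-modules.

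The main obstacle will be the combinatorial input to the construction of $N'_0$ and the $\Ext$ bookkeeping of the last step: the choice of $P$, $Q$, and the surjection $\Res^G_{N_G(D)} P \to Q$ must be made so that the cone $M_0$ has precisely the right self-endomorphism algebra, and this requires careful accounting with the ribbon structure, the multiplicity of the exceptional vertex on both Brauer trees, and the action of the inertial quotient $N_G(D)/C_G(D)$. Once that combinatorial data is pinned down, the verification of the tilting criterion is essentially formal.
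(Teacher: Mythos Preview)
The paper does not prove Theorem~\ref{thm:rouquier}; it is stated with attribution to Rouquier~\cite{rouquier-cyclic} and used as a black box input for the main result (Theorem~\ref{thm:last} via Proposition~\ref{prop:rouq-S_q}). So there is no proof in the paper to compare your proposal against.

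That said, a few remarks on the accuracy of your sketch relative to Rouquier's actual argument, since that is presumably what you are aiming for. First, you take $N_0 = b\bZ_q[G]b'$ and assert that it already induces a stable equivalence of Morita type; this is too quick when $|D|>p$. In general one must pass to a proper indecomposable summand of $b\bZ_q[G]b'$ (the Green correspondent bimodule), and Rouquier's argument proceeds by induction on $|D|$: the base case $|D|=p$ uses Green correspondence directly, and the inductive step passes through the normalizer of the unique subgroup of $D$ of index~$p$, invoking results of Linckelmann on stable equivalences for cyclic blocks. Your sketch collapses this induction. Second, the projective correcting term $N'_0$ is not in general a single tensor $P\otimes_{\bZ_q}Q^\vee$; it is typically a direct sum of such, one for each non-exceptional simple of the block. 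The $\PSL_2(\bF_7)$ example in \S\ref{intro:6} is misleadingly small. Finally, your appeal to Corollary~\ref{prop:restrict-to-ft} for the $\LMod^{\mathrm{ft}}$ statement is fine, but note that over the discrete ring $\bZ_q$ this is just the classical fact that a derived equivalence between symmetric algebras finite over $\bZ_q$ preserves bounded finitely-generated complexes.
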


\subsection{Group rings and permutation modules}\label{sub:permmod}

If $k$ is a commutative ring spectrum, denote by $X \mapsto k[X] \in \Mod(k)$ the functor that carries a space to its $k$-homology spectrum. For example, if $k = \bS$, then $\bS[X]$ is the suspension spectrum of $X$ with a disjoint basepoint and in general $k[X] = \bS[X] \otimes_{\bS} k$.  A useful formula is
\begin{equation}
\label{eq:noneq}
[k[X],\Sigma^i k] \cong H^i(X;k)
\end{equation}
where the right-hand side denotes the $i$th (extraordinary) cohomology of $X$ with coefficients in $k$.

If $G$ acts on $X$, then we may regard $k[X]$ as a $G$-object in $\Mod(k)$ (i.e., as an object of $\Fun(BG,\Mod(k)$)) by composing
\[
BG \to \cS \xrightarrow{k[-]} \Mod(k).
\]
The analog of \eqref{eq:noneq} is
\begin{equation}
\label{eq:equivariant-coh1}
[k[X],\Sigma^i k]_{\Fun(BG,\Mod(k))} \cong H^i_G(X;k) := H^i(X_{hG};k)
\end{equation}
where $X_{hG}:=\varinjlim_{BG} X \cong (X \times EG)/G$ is the Borel construction.

If $X$ is a finite $G$-set, then $k[X] = \bigoplus_{x \in X} k$ is called a permutation module. The associative $k$-algebra structure on $k[G]$ induced by the multiplication on $G$ coincides with the endomorphism algebra of $k[G/\{1\}]$ regarded as a permutation module, which generates $\Fun(BG,\Mod(k))$, so that we have
\[
\Fun(BG,\Mod(k)) \cong \LMod(k[G]).
\]
The symmetric monoidal structure on $\Mod(k)$ induces a symmetric monoidal structure on $\Fun(BG,\Mod(k))$, which we denote by $\otimes_k$. The unit of the monoidal structure is the trivial module $k$. 

\begin{lem}\label{lem: permutation self-dual}
The permutation modules $k[X]$ are self-dual with respect to the monoidal structure in $\Mod(k)$ and in $\Fun(BG,\Mod(k))$.
\end{lem}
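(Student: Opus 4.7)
The plan is to exhibit explicit duality data for $k[X]$ and check that it is $G$-equivariant. Since $X$ is a finite set, $k[X] \cong \bigoplus_{x \in X} k$ is a finite coproduct of copies of the monoidal unit, and any such object is self-dual in $\Mod(k)$: for a finite index set $I$, the object $\bigoplus_I k$ has dual $\prod_I k \cong \bigoplus_I k$. Using the symmetric-monoidal identification $k[X] \otimes_k k[X] \cong k[X \times X]$ (from the fact that $k[-] \co (\cS,\times) \to (\Mod(k),\otimes_k)$ is symmetric monoidal), the coevaluation and evaluation maps can be written as
\[
\eta \co k \to k[X \times X],\quad 1 \mapsto \sum_{x \in X} e_{(x,x)}, \qquad \epsilon \co k[X \times X] \to k,\quad e_{(x,y)} \mapsto \delta_{xy},
\]
and the triangle identities reduce to the standard calculation for a finite free $k$-module. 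This establishes the self-duality in $\Mod(k)$.

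For the refinement to $\Fun(BG,\Mod(k))$, the key point is that both $\eta$ and $\epsilon$ are natural in bijections of $X$. The diagonal action of $G$ on $k[X \times X]$ preserves the diagonal element $\sum_x e_{(x,x)}$ (by reindexing) and preserves the Kronecker delta $(x,y) \mapsto \delta_{xy}$, so $\eta$ and $\epsilon$ lift to morphisms in $\Fun(BG,\Mod(k))$. The triangle identities, being equalities of morphisms in $\Mod(k)$, persist after this lift.

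The delicate point I anticipate is making the above lift rigorous in the $\infty$-categorical sense, as opposed to only on the level of underlying $k$-modules. A clean route is to observe that the construction $X \mapsto (k[X],\eta_X,\epsilon_X)$ assembles into a functor from the groupoid $\mathrm{Fin}^{\simeq}$ of finite sets and bijections to the $\infty$-category of self-dual objects in $\Mod(k)$; composing with the classifying map $BG \to \mathrm{Fin}^{\simeq}$ encoding the $G$-action on $X$ then yields the required $G$-equivariant duality data. Note that it is important that one works with bijections only, since the identification $k[X]^\vee \cong k[X]$ is \emph{not} natural in arbitrary maps of sets (the dual of $f_*$ is the transfer, not $f_*$ itself).
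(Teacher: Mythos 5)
The key idea — use the diagonal coevaluation $k \to k[X] \to k[X\times X] \cong k[X]\otimes_k k[X]$ — matches the paper, but the two arguments diverge in how they handle the evaluation map. The paper never constructs $\epsilon$ explicitly: it observes that the coevaluation is manifestly a map in $\Fun(BG,\Mod(k))$ (each of its two factors is induced by a $G$-map of sets or is the $G$-fixed diagonal element), and then defines the evaluation's homotopy class as the preimage of $\mathrm{id}_{k[X]}$ under a bijection
$[k[X]\otimes k[X],k] \to [k[X],k[X]]$, so that one triangle identity holds by construction (and the other follows using the symmetry of the coevaluation). This sidesteps the coherence problem you flag: one never has to lift $\epsilon$ or the triangle homotopies across the forgetful functor.

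Your route instead constructs $\epsilon$ explicitly and then asserts that $X \mapsto (k[X],\eta_X,\epsilon_X)$ ``assembles into a functor'' $\mathrm{Fin}^{\simeq} \to (\text{self-dual objects of }\Mod(k))$. This is exactly the delicate point, and asserting it is not the same as proving it: a functor of $\infty$-categories requires coherence data in all degrees, and ``the formulas are natural in bijections'' only supplies the $1$-categorical shadow. As written, that step is a gap.

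The gap is fillable, and there is a repair closer in spirit to what you started: note that your $\epsilon$ factors as
\[
k[X]\otimes_k k[X] \;\cong\; k[X\times X] \;\longrightarrow\; k[\Delta(X)] \;\cong\; k[X] \;\longrightarrow\; k,
\]
where the middle map is projection onto the direct summand indexed by the $G$-stable subset $\Delta(X)\subset X\times X$, and the last map is the augmentation induced by $X \to \pt$. Each of these is visibly a morphism in $\Fun(BG,\Mod(k))$, so $\epsilon$ (like $\eta$) is $G$-equivariant without invoking any functoriality-in-$X$ statement. Since the symmetric monoidal structure on $\Fun(BG,\Mod(k))$ is computed pointwise, $k[X]$ is dualizable there, and checking that $\eta,\epsilon$ constitute duality data reduces (by conservativity of the forgetful functor) to the finite-free calculation in $\Mod(k)$ that you already did. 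If you prefer the global ``functor'' picture, the rigorous version of your $\mathrm{Fin}^{\simeq}$ idea is to extend $k[-]$ to a symmetric monoidal functor out of the span $\infty$-category $\mathrm{Span}(\mathrm{Fin})$, where every object is canonically self-dual; but that is considerably more machinery than either the composite-of-equivariant-maps argument or the paper's Yoneda argument.

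Your closing remark — that $k[X]^{\vee}\cong k[X]$ is natural only for bijections, because duals of pushforwards are transfers — is correct and worth keeping in mind.
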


\begin{proof} We produce evaluation and coevaluation maps satisfying the necessary relations.

The coevaluation map comes from the composition
\begin{equation}
\label{eq:perm-dual}
k \to k[X] \xrightarrow{\Delta_X} k[X \times X] \cong k[X] \otimes_k k[X]
\end{equation}
where the first map is the diagonal $k \to \bigoplus_{x \in X} k$. The composite
\[
[k[X] \otimes k[X],k] \xrightarrow{\otimes k[X]} [k[X] \otimes k[X] \otimes k[X],k[X]] \xrightarrow{\mathrm{id}_{k[X]} \otimes \eqref{eq:perm-dual}^*} [k[X],k[X]]
\]
is a bijection. The homotopy class of the evaluation map $k[X] \otimes k[X] \to k$ is the image of $\mathrm{id}_{k[X]}$ under the inverse bijection.
\end{proof}

\begin{rmk}
The $k$-homology of manifolds furnish a more general class of dualizable objects of $\Mod(k)$, and the $k$-homology of $G$-manifolds furnish dualizable objects of $\Fun(BG,\Mod(k))$. For a closed $n$-manifold $M$, the homology spectrum $k[M]$ is self-dual up to a shift so long as $M$ is \emph{$k$-orientable} --- indeed one definition of a $k$-orientation of $M$ is a homotopy class of maps $k \to \Sigma^{-n} k[M]$ with the property that the composite map
\[
k \to \Sigma^{-n} k[M] \to \Sigma^{-n} k[M \times M] \to \Sigma^{-n} k[M] \otimes_k k[M]
\]
exhibits $\Sigma^{-n} k[M]$ as the dual of $k[M]$. Lemma \ref{lem: permutation self-dual} is the case $n=0$.
\end{rmk}

Under the equivalence 
\[
\Bimod(k[G],k[G]) \cong \LMod(k[G] \otimes k[G]^{\op}) \cong \LMod(k[G \times G^{\op}]),
\]
the diagonal bimodule $k[G]$ is a permutation $G \times G^{\op}$-module, with $G \times G^{\op}$ acting on $G$ by left and right multiplication. It follows from \eqref{eq:perm-dual} that each $k[G]$ has a natural symmetric structure in the sense of \S\ref{sec: symmetric}. Furthermore any block of $k[G]$ has a symmetric structure: indeed, if $A = A_1 \times A_2 \times \cdots$, then a symmetric structure on $A$ induces a symmetric structure on each $A_i$ by the composite $A_i \to A \cong A^* \to A_i^*$.

The self-duality of permutation modules gives an identification between homotopy classes of maps $k[X] \to k[Y]$ and equivariant $k$-cohomology classes in $X \times Y$:
\[
[k[X],k[Y]]_{k[G]} \cong [k[X] \otimes k[Y],k]_{k[G]} \cong [k[X \times Y],k]_{k[G]} \cong H^0_G(X \times Y;k).
\]

\subsection{The Burnside ring}\label{sub:burn}
For a finite group $G$ and a finite $G$-set $X$, let $\Burn_G(X)$ denote the Grothendieck group of the commutative monoid whose objects are isomorphism classes of finite $G$-sets $Y$ equipped with a $G$-equivariant map to $X$, and whose addition is given by disjoint union. We will refer to $\Burn_G(X)$ as the Burnside ring of virtual finite $G$-sets over $X$, as it acquires a commutative multiplication given by fiber product over $X$.

\begin{rmk}
\label{rmk:burngx}
In general, $\Burn_G(X) = \bigoplus_{x \in G\backslash X} \Burn_{G_x}(\pt)$, where the sum is over $G$-orbit representatives and $G_x$ denotes the stabilizer. In particular, 
$\Burn_G(G/H)$ is isomorphic to $\Burn_H(\pt)$ and $\Burn_G(G/\{1\}) = \bZ$.
\end{rmk}

A map of $G$-sets $X \to X'$ induces by fiber-product over $X'$ a ring homomorphism $\Burn_G(X') \to \Burn_G(X)$. In particular the map $X \to \pt$ gives $\Burn_G(X)$ a $\Burn_G(\pt)$-module structure, and the map $G \to \pt$ induces the \emph{augmentation map} $\Burn_G(\pt) \to \Burn_G(G) = \bZ$, carrying a $G$-set to the cardinality of its underlying set. Let $I \subset \Burn_G(\pt)$ denote the kernel of the augmentation map, called the \emph{augmentation ideal}.

\begin{thm}[{{Carlsson \cite{carlsson}, weak form of Segal's conjecture}}]\label{thm: Carlsson}
Let $G$ be a finite group, which we regard as acting trivially on $\bS$, and let $\bS^{hG}$ be the homotopy fixed-point spectrum of this action \S\ref{subsec:not}. Then there is a natural map of rings
\begin{equation}
\label{eq:segal-conj}
\Burn_G(\pt) \to \pi_0(\bS^{hG})
\end{equation}
which exhibits the target as the $I$-adic completion of the source.
\end{thm}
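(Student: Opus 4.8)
The plan is to write down the comparison map explicitly, observe that its target is automatically $I$-adically complete, and then identify the map with $I$-adic completion by invoking the solution of the Segal conjecture.

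\emph{Constructing the map.} Since $G$ acts trivially on $\bS$, we have $\bS^{hG} = \varprojlim_{BG}\bS \simeq F(\bS[BG],\bS)$, so that $\pi_0(\bS^{hG}) \cong [\bS[BG],\bS] \cong H^0(BG;\bS)$ is the zeroth stable cohomotopy group of $BG_+$. I would define the comparison map by transfers: a finite $G$-set $X$ determines a finite covering $EG \times_G X \to BG$, hence (by the Becker--Gottlieb transfer) a stable map $\mathrm{tr}_X \colon \bS[BG] \to \bS[EG \times_G X]$, and composing $\mathrm{tr}_X$ with the collapse $\bS[EG \times_G X] \to \bS$ produces a class in $[\bS[BG],\bS] = \pi_0(\bS^{hG})$. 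Disjoint union of $G$-sets is carried to addition and fiber product over $\pt$ to composition of transfers, by the double-coset formula, so this descends to a ring homomorphism $\Burn_G(\pt) \to \pi_0(\bS^{hG})$. Conceptually this is the map induced on $\pi_0$ by the canonical ``Borel completion'' from the genuine $G$-fixed points of the equivariant sphere to $\bS^{hG}$, using that $\Burn_G(\pt)$ is the ring $\pi_0$ of those genuine fixed points; but since the paper avoids the genuine theory I would present only the transfer description.

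\emph{The target is already complete.} Writing $\pi_0(\bS^{hG}) \cong \varprojlim_n H^0(\mathrm{sk}_n BG;\bS)$ via the Milnor sequence --- the $\varprojlim^1$ term vanishing by Mittag--Leffler, since the reduced stable cohomotopy of each finite skeleton of $BG$ is finite --- one checks that the $\Burn_G(\pt)$-module topology on $\pi_0(\bS^{hG})$ defined by powers of $I$ coincides with the skeletal topology: a fixed skeleton's reduced cohomotopy is annihilated by $I^N$ for $N \gg 0$, because $I$ is nilpotent modulo the kernel of restriction to that skeleton. Hence $\pi_0(\bS^{hG})$ is $I$-adically complete, and the map of the previous step factors as $\Burn_G(\pt) \to (\Burn_G(\pt))^{\wedge}_I \to \pi_0(\bS^{hG})$; the theorem is exactly the assertion that the second arrow is an isomorphism.

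\emph{The isomorphism, and the main obstacle.} This last step is the genuine content, and I would deduce it from Carlsson's theorem by the classical chain of reductions. First reduce to $p$-groups: the $I$-adic completion of $\Burn_G(\pt)$ decomposes as a product over primes, the factor at $p$ being detected on $p$-subgroups, and for each $p$ a transfer argument exhibits $\pi_0(\bS^{hG})$, completed at $p$, as a retract of $\pi_0(\bS^{hP})$ completed at $p$ for $P$ a $p$-Sylow, compatibly with the corresponding retraction of Burnside rings. For a $p$-group one then inducts on the order: a filtration of $EG$ by $G$-CW subcomplexes with cells modelled on proper subgroups reduces the claim for $G$ to the claim for its proper subgroups plus a ``singular'' contribution supported on elementary abelian sections (the generalized Segal conjecture for families of subgroups), and the elementary abelian case is in turn reduced to rank one by an iterated Singer-construction argument. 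The remaining base case $G = C_p$ is the theorem of Lin (for $p=2$) and Gunawardena (for $p$ odd) computing $H^0(BC_p;\bS)$, by an Adams spectral sequence / $\Ext$-over-the-Steenrod-algebra calculation identifying the homotopy of the relevant Singer construction with that of $\bS$. The construction of the map, the completeness of its target, and all of these reductions are formal bookkeeping with transfers and equivariant cell structures; the real difficulty, and the reason this is Carlsson's theorem rather than a short argument, is the $C_p$ base case --- the Lin--Gunawardena computation admits no soft proof, and everything else is a reduction to it.
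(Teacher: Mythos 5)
The paper does not prove this statement: Theorem~\ref{thm: Carlsson} is quoted from Carlsson~\cite{carlsson} as a black box, used later to identify endomorphisms of permutation $\bS$-modules with completed Burnside groups. So there is no ``paper's own proof'' to measure your sketch against --- the only sense in which a comparison is possible is whether your sketch is a faithful description of how the Segal conjecture is actually proved in the literature.

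On that score your outline is broadly accurate. The comparison map really is defined via Becker--Gottlieb transfers (equivalently, as the Borel-completion map from genuine $G$-fixed points of the equivariant sphere, where $\pi_0$ is the Burnside ring by tom Dieck's splitting), the reduction to $p$-groups is a standard transfer argument, Carlsson's contribution was the reduction from general $p$-groups to the elementary abelian case (via induction over families and a ``singular set'' argument), the elementary abelian case is Adams--Gunawardena--Miller (iterated Singer constructions), and the base case $C_p$ is Lin ($p=2$) and Gunawardena ($p$ odd), resting on the Ext computation showing the Singer construction is an $\Ext$-isomorphism over the Steenrod algebra. One place your writeup is a bit loose is the ``completeness of the target'' step: the statement that the $I$-adic and skeletal topologies on $\pi_0 \bS^{hG}$ agree is itself a nontrivial Atiyah-style argument (and in fact is usually packaged as part of the conclusion rather than an independent preliminary), so ``one checks'' is doing real work there. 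Another is the description of Carlsson's induction --- the filtration is really by families of subgroups and the singular contribution is governed by approximations to the elementary abelian case, not quite ``cells modelled on proper subgroups'' in the naive sense. But these are imprecisions in a sketch, not gaps in logic; you have correctly located the irreducible content of the theorem in the $C_p$ base case.
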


\begin{rmk}
As the $G$-action on $\bS$ is trivial, the homotopy fixed points $\bS^{hG}$ are naturally identified with $\underline{\Maps}_{\Mod(\bS)}(\bS[BG],\bS)$ and with
$\underline{\Maps}_{\LMod(\bS[G])}(\bS,\bS)$. (Similarly, $\bS_{hG}$ is naturally identified with $\bS[BG]$.)
\end{rmk}

We denote the completion at $I$ of $\Burn_G(\pt)$ by $\Burn_G(\pt)^{\wedge}_I$. If $X$ is a finite $G$-set, we also define $\Burn_G(X)^{\wedge}_I$ using the natural $\Burn_G(\pt)$-module structure on $\Burn_G(X)$.

\begin{prop}
Let $G$ be a finite group.  Then for any two finite $G$-sets $X$ and $Y$, we have an identification
\[
\pi_0 \Maps_{\LMod(\bS[G])}(\bS[X],\bS[Y]) \cong \Burn_G(X \times Y)^{\wedge}_I.
\]
In particular, by taking $Y= \pt$, we have
\begin{equation}\label{eq:burngx}
\pi_0 \Maps_{\LMod(\bS[G])}(\bS[X],\bS) \cong \Burn_G(X)^{\wedge}_I.
\end{equation}
\end{prop}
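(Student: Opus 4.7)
The plan is to reduce both identifications to Carlsson's theorem (\Cref{thm: Carlsson}) by combining self-duality of permutation modules with the induction--restriction adjunction.

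First I would note that, by \Cref{lem: permutation self-dual}, $\bS[Y]$ is self-dual in $\LMod(\bS[G])$, and that $\bS[X]\otimes_{\bS}\bS[Y]\cong \bS[X\times Y]$ as $G$-objects. This yields a natural equivalence
\[
\Maps_{\LMod(\bS[G])}(\bS[X],\bS[Y]) \;\simeq\; \Maps_{\LMod(\bS[G])}(\bS[X\times Y],\bS),
\]
which reduces the first identification of the proposition to the second. So it suffices to produce a natural isomorphism
\[
\pi_0\Maps_{\LMod(\bS[G])}(\bS[Z],\bS) \;\cong\; \Burn_G(Z)^{\wedge}_I
\]
for an arbitrary finite $G$-set $Z$.

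Next I would decompose $Z=\coprod_i G/H_i$ into orbits and use that both sides are additive in $Z$: the left side splits as a product of $\pi_0\Maps_{\LMod(\bS[G])}(\bS[G/H_i],\bS)$, and by \Cref{rmk:burngx} the right side decomposes as a sum of $\Burn_{H_i}(\pt)^{\wedge}$. So it remains to treat $Z=G/H$ for a single subgroup $H\subset G$. Using the isomorphism $\bS[G/H]\cong\bS[G]\otimes_{\bS[H]}\bS$ together with the induction--restriction adjunction (Frobenius reciprocity), I obtain
\[
\Maps_{\LMod(\bS[G])}(\bS[G/H],\bS) \;\simeq\; \Maps_{\LMod(\bS[H])}(\bS,\bS) \;=\; \bS^{hH},
\]
and Carlsson's theorem applied to $H$ identifies $\pi_0\bS^{hH}$ with $\Burn_H(\pt)^{\wedge}_{I_H}=\Burn_G(G/H)^{\wedge}_{I_H}$.

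The main (and essentially only nontrivial) point is matching the completion topologies. Carlsson's theorem supplies the $I_H$-adic completion of $\Burn_H(\pt)=\Burn_G(G/H)$, whereas the proposition asks for the completion coming from its $\Burn_G(\pt)$-module structure, i.e. the $I_G$-adic completion (where $\Burn_G(\pt)$ acts via the restriction ring map $\Burn_G(\pt)\to\Burn_H(\pt)$). Restriction carries $I_G$ into $I_H$, and conversely a standard argument, using that $[G:H]<\infty$ together with transitivity of induction, shows that some power of $I_H$ is contained in $I_G\cdot\Burn_H(\pt)$. The two filtrations are therefore cofinal and produce the same completion. Reassembling the per-orbit isomorphisms then yields the proposition; everything apart from this compatibility of completions is formal.
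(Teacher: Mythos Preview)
Your proposal is correct and follows essentially the same route as the paper: reduce to $Y=\pt$ via self-duality of $\bS[Y]$, split into orbits, apply the induction--restriction adjunction to identify $\Maps_{\bS[G]}(\bS[G/H],\bS)\simeq \bS^{hH}$, and invoke Carlsson's theorem. The paper simply asserts the final isomorphism $\Burn_H(\pt)^{\wedge}_{I_H}\cong \Burn_G(G/H)^{\wedge}_{I_G}$ without comment, whereas you correctly flag the cofinality of the $I_H$- and $I_G$-adic filtrations on $\Burn_H(\pt)$ as the one nontrivial point and sketch its proof.
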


\begin{proof}
In fact, it suffices to prove the special case (\ref{eq:burngx}) noted in the statement, as $\bS[Y]$ is canonically self-dual and therefore
\[
\Maps_G(\bS[X],\bS[Y]) \cong \Maps_G(\bS[X] \otimes \bS[Y],\bS) \cong \Maps_G(\bS[X \times Y],\bS).
\]
For this, note that both sides of \eqref{eq:burngx} convert disjoint union into products, so it's enough to consider the case where $X$ has a single orbit, say $X = G/H$. Then 
\[
\Maps_G(\bS[G/H],\bS) \cong \Maps_H(\bS,\bS),
\]
whose $\pi_0$ is identified by Theorem \ref{thm: Carlsson} with $\Burn_H(\pt)_{I_H}^\wedge \cong \Burn_G(G/H)_{I_G}^\wedge$.
\end{proof}

If we further complete $\Burn_G(X)^{\wedge}_I$ at $p$, or equivalently if we study $\Burn_G(X)^{\wedge}_{(I,p)}$ where $(I,p)$ is the kernel of $\Burn_G(X) \xrightarrow{\mathrm{aug}} \bZ \to \bZ/p$, we have the following simple description:

\begin{lem}\label{lem:fgZp}
The $\bZ_p$-module $\Burn_G(\pt)^{\wedge}_{(I,p)}$ is free of finite rank, generated by isomorphism classes of $G$-sets of the form $G/P$ for $P \subset G$ a $p$-group.
\end{lem}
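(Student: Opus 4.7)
The plan is to identify $B := \Burn_G(\pt)^{\wedge}_{(I, p)}$ with a complete-local direct factor of the finitely generated $\bZ_p$-algebra $R := \Burn_G(\pt) \otimes \bZ_p$, and then to exhibit $\{[G/P]\}$ (over $G$-conjugacy classes of $p$-subgroups $P$) as an explicit $\bZ_p$-basis of it.

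\textbf{Structural setup.} The Burnside ring $\Burn_G(\pt)$ is $\bZ$-free of finite rank with basis $\{[G/H]\}_{(H)}$ indexed by $G$-conjugacy classes of subgroups, so $R$ is a finitely generated $\bZ_p$-algebra and thus decomposes canonically as a finite product of complete local $\bZ_p$-algebras indexed by its maximal ideals. The fixed-point ring maps $\phi_H\colon [X]\mapsto |X^H|$ assemble into an embedding $\Phi\colon R \hookrightarrow \prod_{(H)} \bZ_p$ with finite cokernel, which becomes an isomorphism after inverting $p$. For any $p$-subgroup $P$ and any $P$-set $X$, the identity $|X^P| \equiv |X| \pmod p$ (non-trivial $P$-orbits having $p$-divisible size) gives $\phi_P \equiv \phi_{\{1\}} \pmod p$, so the maximal ideal $(I, p) = \phi_{\{1\}}^{-1}(p\bZ_p)$ coincides with $\phi_P^{-1}(p\bZ_p)$ for every $p$-subgroup $P$. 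By Dress's classification of the prime spectrum of the Burnside ring, these account for all coordinates of $\prod_{(H)}\bZ_p$ mapping into the maximal ideal of $B$, and $B$ is the corresponding factor, of $\bZ_p$-rank equal to the number of $G$-conjugacy classes of $p$-subgroups.

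\textbf{Containment and linear independence.} For a $p$-subgroup $P$, $\phi_H([G/P])\ne 0$ forces $H$ to be $G$-subconjugate to $P$, hence itself a $p$-subgroup; thus $\Phi([G/P])$ is supported on the ``$p$-subgroup block'' of $\prod_{(H)}\bZ_p$, and by the compatibility of $\Phi$ with the idempotent decomposition of $R$, $[G/P]\in B$. Ordering $p$-subgroup representatives $P_1,\dots,P_r$ so that $P_i$ subconjugate to $P_j$ implies $i\le j$, the matrix $(\phi_{P_i}([G/P_j]))$ is upper-triangular with positive diagonal entries $[N_G(P_i):P_i]$, whence the $[G/P]$'s are $\bZ_p$-linearly independent and span $B\otimes\bQ_p$.

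\textbf{Spanning over $\bZ_p$.} It remains to show the $[G/P]$'s generate $B$ as a $\bZ_p$-module. Let $e:=1_B\in R$ be the primitive idempotent cutting out $B$. By Gluck's formula for the rational primitive idempotents of $R\otimes\bQ$, we have $e=\sum_{(P):\,p\text{-sub}} e_P$ with
\[
e_P=\frac{1}{|N_G(P)|}\sum_{L\le P}|L|\,\mu(L,P)\,[G/L],
\]
so $e$ is a $\bQ$-linear combination of $[G/L]$'s where each $L\le P$ is itself a $p$-subgroup. Since $e\in R$, its coefficients in the $\bZ_p$-basis $\{[G/H]\}_{(H)}$ of $R$ must lie in $\bZ_p$; combining these two facts, $e=\sum_{(P):\,p\text{-sub}} c_P\,[G/P]$ with $c_P\in\bZ_p$. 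Now for any subgroup $H$, the Mackey (double-coset) formula
\[
[G/P]\cdot[G/H]=\sum_{g\in P\backslash G/H}[G/(P\cap gHg^{-1})]
\]
produces only $[G/Q]$'s with $Q\le P$ a $p$-subgroup. Therefore $e\cdot[G/H]=\sum_P c_P [G/P]\cdot[G/H]$ lies in the $\bZ_p$-span of $\{[G/Q]\}$ over $p$-subgroups $Q$. Since $B=eR$ is generated over $\bZ_p$ by the elements $\{e\cdot[G/H]\}_{(H)}$, the $[G/P]$'s span $B$, and combined with the linear independence above they form the desired $\bZ_p$-basis.

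\textbf{Main obstacle.} The delicate step is the integrality claim for the idempotent $e$: that its Gluck expansion, a priori only a $\bQ$-combination of $[G/P]$'s for $p$-subgroups $P$, collapses to a $\bZ_p$-combination. This is forced by $e\in R$ but is not manifest from Gluck's formula itself, since individual $e_P$'s can have denominators divisible by $p$; the rest of the argument is standard Mackey calculus and triangular linear algebra over $\bZ_p$.
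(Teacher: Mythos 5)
Your proof is correct and follows essentially the same route as the paper's: identify $\Burn_G(\pt)^{\wedge}_{(I,p)}$ with the local factor of $\Burn_G(\pt)\otimes\bZ_p$ cut out by Dress's idempotent $\varepsilon=\sum_Q e_Q$ over $p$-subgroups, observe via the marking homomorphism that each rational idempotent $e_Q$ involves only $G$-sets with $p$-subgroup isotropy, and then combine the integrality of $\varepsilon$ (as an element of the $\bZ_p$-free ring) with the Mackey product formula to conclude that $\varepsilon\cdot[G/H]$ lies in the $\bZ_p$-span of the $[G/P]$'s. You are more explicit than the paper in two places — you name Gluck's formula to justify the ``subconjugate support'' of $e_Q$, and you isolate the integrality of $\varepsilon$ as the delicate point (which the paper elides) — and you also supply the upper-triangular linear independence argument, which is not strictly needed for the spanning statement but identifies the $[G/P]$ as a basis; none of this changes the substance of the argument.
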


Because of this and Remark \ref{rmk:burngx}, $\Burn_G(X)^{\wedge}_{(I,p)}$ is a free $\bZ_p$-module generated by the set of isomorphism classes of $G$-maps $G/P \to X$, where $P\subset G$ is a $p$-group.

\begin{proof}
Recall Burnside's marking homomorphism $\Burn_G(\pt) \to \prod_H \bZ$, also called the ``table of marks'' \cite{burnside}. The factors in the direct product are indexed by conjugacy class representatives of subgroups of $G$, and the projection onto the factor $H$ sends the $G$-set $X$ to the cardinality of its $H$-fixed points. If $s \in \Burn_G(\pt)$ we will write $\#s^H$ for the composition of the marking homomorphism with the projection onto the $\bZ$ factor indexed by $H$.

The marking homomorphism is a ring homomorphism, and it is easy to see that it is injective (a reference is \cite[Proposition 1.2.2]{td79}).  Since its domain and codomain are free abelian groups of the same rank, this means it becomes an isomorphism after applying $- \otimes {\bQ}$ or $- \otimes {\bQ_p}$. Write $e_H \in \Burn_G(\pt) \otimes \bQ$ for the virtual $G$-set with one $H$-fixed point and no $H'$-fixed points when $H'$ is not conjugate to $H$:
\[
\#e_H^H = 1 \qquad \#e_H^{H'} = 0 \text{ when $H'$ is not conjugate to $H$}
\]
The $e_H$ are the primitive idempotents in $\Burn_G(\pt) \otimes \bQ$ and in $\Burn_G(\pt) \otimes \bQ_p$.  We remark that $e_H$ only involves $G$-sets corresponding to subconjugates of $H$: to see this, consider the variant of the marking homomorphism where on the source, one takes the subring generated by $G$-sets subconjugate to $H$, and on the target, one considers only factors corresponding to subconjugates of $H$. This variant is easily seen to be injective again, and is therefore an isomorphism after tensoring with $\Q$. 

The primitive idempotents in $\Burn_G(\pt) \otimes \bZ_p \cong \Burn_G(\pt)^{\wedge}_{p}$ were identified by Dress \cite{dress}. They are in one-to-one correspondence with conjugacy classes of $p$-perfect subgroups $\varpi \subset G$ (a group is called $p$-perfect when it has no normal subgroups of $p$-power index). The idempotent corresponding to $\varpi$ is
$\varepsilon_{\varpi} = \sum_{H} e_H$, where the sum runs through class representatives of subgroups $H$ that contain a conjugate of $\varpi$ as a normal subgroup of $p$-power index.

The trivial subgroup is $p$-perfect; let us denote its corresponding idempotent by $\varepsilon := \varepsilon_1$. Thus
$
\varepsilon := \varepsilon_1 = \sum_Q e_Q
$
where the sum runs through class representatives of $p$-subgroups $Q \subset G$.
The natural map
\begin{equation}\label{eq: burn 1}
\Burn_G(\pt)^{\wedge}_p \to \Burn_G(\pt)^{\wedge}_{(I,p)}
\end{equation}
kills all the other $\varepsilon_{\varpi}$ and has target a local ring (the target is the completion of $\Burn_G(\pt)$ at a maximal ideal, namely the kernel of augmentation modulo $p$).  Therefore, \eqref{eq: burn 1} factors through 
\begin{equation}\label{eq: burn 2}
\varepsilon \Burn_G(\pt)^{\wedge}_p \to \Burn_G(\pt)^{\wedge}_{(I,p)}
\end{equation}
Since the source ring $\varepsilon \Burn_G(\pt)^{\wedge}_p$ is local and (by the injectivity of the marking homomorphism) finitely generated and free over $\Z_p$, the $(I,p)$-adic topology on it coincides with the $p$-adic topology: $\varepsilon \Burn_G(\pt)^{\wedge}_p/(p)$ is a finite local ring so the image of the ideal $(I,p)$ is nilpotent. Therefore, $\varepsilon \Burn_G(\pt)^{\wedge}_p$ is isomorphic to its own completion with respect to the ideal generated by $(I,p)$, so that \eqref{eq: burn 2} is an isomorphism.

Furthermore, note that for any $p$-subgroup $Q\subset G$, we have $\varepsilon G/Q = G/Q$.  This can be seen from the fact that they have the same image under the marking homomorphism, as $(G/Q)^H$ is nonempty if and only if $H$ is conjugate to a subgroup of $Q$, and $\varepsilon$ is the indicator function on the factors indexed by $p$-groups $H$. 

It therefore suffices to show that the $G$-sets of the form $\varepsilon G/Q$ span $\varepsilon\Burn_G(\pt)^{\wedge}_p$.  But the $G$-sets of the form $G/H$ span $\Burn_G(\pt)$ and inside $\varepsilon \Burn_G(\pt)^{\wedge}_p$, we have $G/H = \varepsilon G/H$.  As $\varepsilon = \sum_Q e_Q$ only involves $G$-sets of the form $G/Q$ for $p$-subgroups $Q$ and $G/H \times G/Q$ involves only $G$-sets with isotropy subconjugate to $Q$, we conclude.  
\end{proof}

\subsection{Digression on the strong Segal conjecture and Tate cohomology} \label{sec:strong-segal}
This subsection is not used in the proof of \S\ref{intro:maintheorem}.

Segal formulated \eqref{eq:segal-conj} by analogy with Atiyah's theorem \cite{atiyah}, which identifies the complex $K$-theory $\KU^0(BG)$ with a completion of the representation ring $R(G)$ of $G$, i.e. the Grothendieck ring of finite-dimensional representations of $\bC[G]$:
\begin{equation}
\label{eqn:ascomp}
R(G) \to \KU^0(BG).
\end{equation}

There is a more sophisticated version of both Atiyah's theorem and of the Segal conjecture, which in a way is the start of ``genuine'' equivariant stable homotopy theory.
\begin{enumerate}
\item There is a commutative ring spectrum $\KU^G$, the (genuine) $G$-equivariant $K$-theory of a point, together with a map
\[
\KU^G \to \KU^{hG}
\]
which recovers \eqref{eqn:ascomp} on $\pi_0$. It is constructed out of the category of complex representations of $G$ in about the same way that $\KU$ is constructed out of the category of complex vector spaces. As an object of $\Mod(\bS)$ or $\Mod(\KU)$, $\KU^G$ has a simple structure: it is a free $\KU$-module spanned by the irreducible complex representations:
\[
\KU^G \cong \bigoplus_{\mathrm{Irr}(\bC[G])} \KU.
\]

\item Let $\bS^G$ denote the group completion $K$-theory spectrum of the category of finite $G$-sets, with its symmetric monoidal structure given by disjoint union of $G$-sets. The Cartesian product of $G$-sets endows $\bS^G$ with the structure of a commutative ring spectrum.
As an object of $\Mod(\bS)$ one has
\[
\bS^G \cong \bigoplus_H \bS[B(N_G(H)/H)]
\] 
where the sum ranges over conjugacy class representatives of subgroups $H \subset G$. 
\end{enumerate}

Carlsson proved more than \eqref{eq:segal-conj}:
\begin{thm}[{\cite{carlsson}, strong form of Segal's conjecture}]
Let $G$ be a finite group. There is a natural map of commutative ring spectra
\[
\bS^G \to \bS^{hG}
\]
which exhibits $\pi_*(\bS^{hG})$ as the $I$-adic completion of $\pi_*(\bS^G)$.
\end{thm}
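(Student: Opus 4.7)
The plan is to construct the natural map and then verify the completion statement by an induction that bottoms out at $G = \bZ/p$. For the construction, $\bS^G$ is the genuine $G$-fixed points of the $G$-equivariant sphere spectrum built by $K$-theory from the symmetric monoidal category of finite $G$-sets. There is a natural comparison map from genuine fixed points to homotopy fixed points (the Borel completion), and this yields $\bS^G \to \bS^{hG}$; because the comparison is monoidal, the resulting map is one of $E_\infty$-rings. On $\pi_0$ and after projection onto the $H = 1$ summand in the tom Dieck splitting displayed just above the theorem, this recovers the weak form \eqref{eq:segal-conj}, which is reassuring as a sanity check.

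To analyze $\pi_*$ I would first reduce to the case of a $p$-group. Using the idempotent decomposition of the Burnside ring recalled in the proof of Lemma \ref{lem:fgZp}, together with a transfer argument on the homotopy fixed-points side (which realizes $\bS^{hG}_{(p)}$ as a summand of $\bS^{hP}_{(p)}$ for $P \subset G$ a $p$-Sylow), the claim for $G$ is reduced to the claim for all $p$-groups, for each prime $p$ dividing $|G|$. For the second reduction, suppose $G$ is a nontrivial $p$-group and pick a central subgroup $C \subset G$ of order $p$. The isotropy-separation cofiber sequence attached to the family of proper subgroups containing $C$ splits both $\bS^G$ and $\bS^{hG}$ into pieces involving the analogous objects for $G/C$ and for proper subgroups of $G$. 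A five-lemma argument on the resulting long exact sequences, combined with the inductive hypothesis applied to $G/C$ and to proper subgroups of $G$, reduces the problem to the base case $G = \bZ/p$.

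The technical heart is this base case: the assertion that $\bS \to \bS^{h\bZ/p}$ exhibits the target as the $p$-completion of the source (the $I$-adic and $p$-adic topologies agree here, since after the idempotent analysis of Lemma \ref{lem:fgZp} the augmentation ideal localizes to $(p)$). This is the Lin--Gunawardena theorem, proved via the Adams spectral sequence: the key input is the Singer construction on $\mathrm{Ext}$ over the mod-$p$ Steenrod algebra, which identifies the $E_2$-page for $\bS^{h\bZ/p}$ with the expected completion of the $E_2$-page of the Adams spectral sequence for $\bS_p^\wedge$; a convergence argument then concludes. This step is where essentially all the difficulty lies, and is the main obstacle --- the inductive and formal machinery sketched above is routine by comparison, but the base case requires genuine input from stable homotopy theory.
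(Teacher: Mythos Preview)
The paper does not provide its own proof of this theorem: it is stated as a result of Carlsson, cited from \cite{carlsson}, and serves only as background in the digression \S\ref{sec:strong-segal}, which the authors explicitly note is not used in the proof of the main theorem. So there is nothing in the paper to compare your proposal against.

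That said, your sketch is a reasonable outline of the standard strategy for Carlsson's theorem: reduce to $p$-groups via the Dress idempotents and transfer, then induct on the order of the $p$-group using isotropy separation (or the closely related singular-set filtration), with the base case $G = \bZ/p$ being the Lin--Gunawardena theorem proved via the Singer construction and the Adams spectral sequence. This is essentially the architecture of Carlsson's original argument, though the inductive step in Carlsson's paper is organized somewhat differently (via a careful analysis of the functor $X \mapsto (\bS[X])^{hG}$ on finite $G$-CW complexes and a reduction involving the family of proper subgroups). Your description of the base case as ``where essentially all the difficulty lies'' is accurate in spirit, but the inductive step is not entirely routine either: one has to be careful that the completion functor behaves well with respect to the long exact sequences arising from isotropy separation, and this requires some finiteness input. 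If you want to flesh this out into a genuine proof you would need to supply those details; as written it is a plausible roadmap rather than a proof.
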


This has a surprising consequence: $\bS^{hG}$ is connective. (To see that it is surprising, note that replacing $\bS$ with the Eilenberg-MacLane spectrum of $\bZ$ yields an object concentrated in non-positive degrees, as $\pi_i(\bZ^{hG})$ is the $(-i)$th group cohomology of $G$).

A second consequence of the Segal conjecture is that one can compute the Tate cohomology of some finite groups with coefficients in the sphere. For a finite group $G$ and a $G$-spectrum $M$, the $G$-Tate cohomology $M^{tG} \in \Mod(\bS)$ of $M$ is defined via a cofiber sequence
\[
M_{hG} \xrightarrow{\Nm} M^{hG} \to M^{tG}
\]
When $M$ is discrete, the map $\Nm$ is given on $\pi_0$ by the formula $m \mapsto \sum_{g \in G} gm$, and the homotopy groups $\pi_* M^{tG} = \hat{H}^{-*}(G;M)$ coincide with the classical Tate cohomology groups of $G$ with coefficients in $M$.

\begin{exm}
When $G = C_p$, the cyclic group of order $p$, the nature of the augmentation ideal is that the strong Segal conjecture identifies $\bS^{hC_p}$ as $\bS_p \oplus \bS[BC_p]$. The norm map is the inclusion of the second summand, $\bS[BC_p] = \bS_{hC_p}$. It follows that $\bS^{tC_p} \cong \bS_p$. It is in this form that the Segal conjecture for $C_p$ was originally proved by Lin ($p = 2$, \cite{Lin}) and Gunawardena ($p$ odd, \cite{gunawardena}).
\end{exm}

\subsection{$p$-permutation $\bS_q$-modules}\label{sub:permlift}
We say that a $\bZ_q[G]$-module $M_0$ \emph{lifts to $\bS_q$} if there exists $M \in \LMod(\bS_q[G])$ such that there is an equivalence 
\[
M \otimes_{\bS_q} \bZ_q \cong M_0
\]
 of $\bZ_q[G]$-modules. The question of whether a $\bZ_q[G]$-module lifts to $\bS_q$ is a close relative of the ``equivariant Moore space problem,'' and negative examples were first given by Carlsson in the cases $G = C_p\times C_p$ \cite{carlsson2}. In this section we discuss a class of modules for which a natural lift to the sphere does exist: for summands of permutation modules, by virtue of the following proposition:

\begin{prop}
\label{prop:Sp-permutation}
Let $G$ be a finite group, let $X$ be a finite $G$-set, and suppose that $\bZ_q[X] \cong M_0 \oplus N_0$ is a $\bZ_q[G]$-module splitting. Then, there exists a $\bS_q[G]$-module splitting $\bS_q[X] \cong M \oplus N$ which recovers the $\bZ_q[G]$-module splitting above after applying $\otimes_{\bS_q} \bZ_q$.
\end{prop}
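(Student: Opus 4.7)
The splitting $\bZ_q[X] = M_0 \oplus N_0$ is encoded by an idempotent $e_0 \in S := \End_{\bZ_q[G]}(\bZ_q[X])$, and a lift to $\bS_q[G]$-modules amounts to lifting $e_0$ to an idempotent in $R := \pi_0\End_{\bS_q[G]}(\bS_q[X])$ along the ring map $\phi\colon R \to S$ induced by base change along $\bS_q \to \bZ_q$. The plan is to show $R$ is a $p$-adically complete, finitely generated $\bZ_q$-algebra and that $\ker\phi \subset \operatorname{rad}(R)$; the classical idempotent-lifting theorem then produces the desired $e \in R$ with $\phi(e) = e_0$, yielding the splitting $\bS_q[X] \cong e\cdot\bS_q[X] \oplus (1-e)\cdot\bS_q[X]$.

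To analyze $R$, use the self-duality of permutation modules (\Cref{lem: permutation self-dual}) to get $R \cong \pi_0(\bS_q[X \times X])^{hG}$ for the diagonal $G$-action. Decomposing $X \times X$ into $G$-orbits with stabilizers $G_{(x,y)}$ gives an additive splitting
\[
R \;\cong\; \bigoplus_{G\cdot(x,y)\,\in\, G\backslash(X\times X)} \pi_0\,\bS_q^{hG_{(x,y)}}.
\]
Base-changing Carlsson's theorem (\Cref{thm: Carlsson}) from $\bS$ to $\bS_q$ (which is valid because adjoining prime-to-$p$ roots of unity is a finite free base change) identifies each summand with $\varepsilon\,\Burn_{G_{(x,y)}}(\pt)^\wedge_p \otimes_{\bZ_p} \bZ_q$; by \Cref{lem:fgZp} this is a complete local ring and a finitely generated free $\bZ_q$-module. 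So $R$ is finitely generated over $\bZ_q$ and $p$-adically complete. Meanwhile $S = \bZ_q[X \times X]^G \cong \bigoplus_{G\cdot(x,y)} \bZ_q$, and under the matching decomposition $\phi$ acts summand-wise as the Burnside augmentation; a $\bZ_q$-linear section of $\phi$ is given orbit-wise by the transfers $\bS_q \to \bS_q^{hG_{(x,y)}} \to \bS_q[X \times X]^{hG}$ arising from the $G$-equivariant inclusions $G/G_{(x,y)} \hookrightarrow X \times X$, so $\phi$ is surjective.

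To show $\ker\phi \subset \operatorname{rad}(R)$, it suffices to check that the image $\bar J$ of $\ker\phi$ in the finite-dimensional $\bF_q$-algebra $R/pR$ is a nil ideal. Summand-wise the augmentation kernel modulo $p$ is the maximal ideal of an Artinian local $\bF_q$-algebra, hence nilpotent within its own local summand. The main technical obstacle is to globalize this summand-wise nilpotency to the full ring $R/pR$: the ring structure on $R$ is composition of endomorphisms, equivalently fiber-product composition of spans in the Burnside category, and this composition couples the orbit summands of the underlying abelian group. I expect the cleanest route is to classify the simple $R/pR$-modules directly, showing that they are in bijection via $\phi$ with the $|G\backslash(X\times X)|$ simple $S/pS$-modules, so that $\ker\phi$ annihilates every simple $R/pR$-module. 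Once this is established, $p$-adic completeness of $R$ combined with $\ker\phi \subset \operatorname{rad}(R)$ produces the idempotent $e \in R$ by successive approximation modulo $p^n$, using the classical formula $x_{n+1} = 3x_n^2 - 2x_n^3$ applied to an initial additive lift $x_0$ of $e_0$; since each correction lies in $\ker\phi$, the image $\phi(e) = e_0$ is preserved exactly.
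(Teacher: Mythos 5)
Your identification of $R := \pi_0\End_{\bS_q[G]}(\bS_q[X])$ with the $(I,p)$-completed Burnside ring $\Burn_G(X\times X)^{\wedge}_{(I,p)}\otimes_{\bZ_p}\bZ_q$ via self-duality of permutation modules and Carlsson's theorem, and your check that $\phi\colon R\to S$ is a surjection of $\bZ_q$-algebras finitely generated as $\bZ_q$-modules, coincide with the paper's setup. The divergence is in what you do next, and that is where the gap lies.

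You reduce everything to the claim $\ker\phi\subset\operatorname{rad}(R)$, but you do not prove it; you only sketch a strategy (``classify the simple $R/pR$-modules'') and explicitly flag that you have not carried it out. Moreover, the strategy as stated contains an error: you say $S/pS$ has $|G\backslash(X\times X)|$ simple modules, but $S=\End_{\bZ_q[G]}(\bZ_q[X])$ and its simple modules are indexed by isomorphism classes of indecomposable summands of $\bZ_q[X]$, not by orbits of $X\times X$. (Take $G=\{1\}$ and $|X|=n$: then $S=M_n(\bZ_q)$ has one simple module, while $|X\times X|=n^2$.) So the proposed route, as written, cannot go through. The underlying claim $\ker\phi\subset\operatorname{rad}(R)$ is nevertheless true; a clean argument is the following. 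Since $R$ and $S$ are semiperfect, $\ker\phi\subset\operatorname{rad}(R)$ is equivalent to the statement that $\phi(e)\neq 0$ for every primitive idempotent $e\in R$ (an idempotent in a Jacobson radical is zero, and a surjection of semisimple Artinian rings kills a simple factor iff it kills the corresponding idempotent). If $e\in R$ is a nonzero primitive idempotent, then $e\bS_q[X]$ is a nonzero compact $\bS_q$-module; the truncation $\bS_q\to\bZ_q$ is conservative on compact modules (this is exactly the hypothesis \eqref{eq:nakayama} applied to $\bS_q\to\bZ_q$, as noted after Proposition \ref{prop:conservative}), so $e\bS_q[X]\otimes_{\bS_q}\bZ_q\neq 0$, hence $\phi(e)\neq 0$.

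It is worth noting that the paper sidesteps this issue entirely. Instead of proving $\ker\phi\subset\operatorname{rad}(R)$, it proves the purely algebraic Proposition \ref{prop:Sp-permutation2}: for \emph{any} surjection $f\colon \bar A\twoheadrightarrow\bar B$ of $\bZ_q$-algebras finitely generated as $\bZ_q$-modules, primitive idempotents of $\bar B$ lift to primitive idempotents of $\bar A$. The proof lifts units along $f$ (Lemma \ref{lem:inv-lift}) and uses the Krull--Schmidt property together with the conjugacy criterion for idempotents (Lemma \ref{lem:conj-idem}); it does not require $\ker f$ to be contained in the radical, and indeed tolerates some $f(e_i)=0$. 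That is a more flexible tool (applicable to surjections where $\ker f\not\subset\operatorname{rad}$), at the cost of a slightly longer chain of lemmas. Your approach, once the radical claim is actually proved, would give the statement via the standard successive-approximation lifting of idempotents along the complete ideal $\ker\phi$ as you indicate, and is a valid alternative; but as written it has a gap, and the intermediate claim about the simple modules of $S/pS$ is false.
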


The map of ring spectra $\bS_q \to \bZ_q$, given by truncation, induces a map of endomorphism rings
\begin{equation}
\label{eq:lift-idems-here}
\pi_0\Maps_{\bS_q[G]}(\bS_q[X],\bS_q[X]) \to \pi_0\Maps_{\bZ_q[G]}(\bZ_q[X],\bZ_q[X])
\end{equation}
and the content of the Proposition is that any primitive idempotent in the target lifts to a primitive idempotent in the source. By \S \ref{sub:burn}, this map can be identified with the natural map
\[
\Burn_G(X\times X)^{\wedge}_{(p,I)}\otimes_{\bZ_p}\bZ_q \to \pi_0\Maps_{\bZ_q[G]}(\bZ_q[X\times X],\bZ_q)
\]
which sends a $G$-set over $X\times X$ to its rank function.  In particular, the source (\Cref{lem:fgZp}) and target are free $\bZ_q$-modules of finite rank, and the map is a surjection of (not necessarily commutative) $\bZ_q$-algebras, so this is a consequence of the following purely algebraic fact:

\begin{prop}
\label{prop:Sp-permutation2}
Let $ \bar{A}$ and $\bar{B}$ be associative $\bZ_q$-algebras that are finitely generated as $\bZ_q$-modules. Let $f:\bar{A} \to \bar{B}$ be a surjective algebra homomorphism. Then any primitive idempotent of $\bar{B}$ lifts to a primitive idempotent of $\bar{A}$.
\end{prop}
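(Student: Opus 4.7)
The plan is to exploit that $\bar A$ and $\bar B$, as finitely generated modules over the complete DVR $\bZ_q$, are \emph{semiperfect} rings: $p$ lies in both Jacobson radicals, the quotients $\bar A/J(\bar A)$ and $\bar B/J(\bar B)$ are finite-dimensional semisimple $\bF_q$-algebras, and $p$-adic Hensel provides lifting of idempotents through $J$.

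The first key step is to verify that $f$ restricts to a surjection $J(\bar A)\twoheadrightarrow J(\bar B)$, and hence induces a surjection of semisimple $\bF_q$-algebras $\bar A/J(\bar A)\twoheadrightarrow \bar B/J(\bar B)$. The inclusion $f(J(\bar A))\subseteq J(\bar B)$ is formal from the characterization of the Jacobson radical via $1-rx\in\bar A^\times$. For the reverse, $\bar B/f(J(\bar A))=\bar A/(J(\bar A)+\ker f)$ is a quotient of the semisimple ring $\bar A/J(\bar A)$, hence is itself semisimple, so its kernel contains $J(\bar B)$. A surjection of semisimple $\bF_q$-algebras is a projection onto a direct factor, giving a splitting $\bar A/J(\bar A)\cong \bar B/J(\bar B)\times C$ for some semisimple $\bF_q$-algebra $C$.

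Next, given the primitive idempotent $\bar e\in\bar B$, reducing yields a primitive $\tilde e\in\bar B/J(\bar B)$, and I would lift $(\tilde e,0)\in\bar A/J(\bar A)$ via the semiperfect structure to an idempotent $e_0\in\bar A$, which is automatically primitive: any nontrivial orthogonal decomposition $e_0=f_1+f_2$ would reduce to one of $(\tilde e,0)$, forcing one summand to lie in $J(\bar A)$, but the only idempotent in $J(\bar A)$ is $0$. Then $f(e_0)$ is a primitive idempotent of $\bar B$, since $f(e_0)\bar B f(e_0)=f(e_0\bar A e_0)$ is a nonzero quotient of the local ring $e_0\bar A e_0$, and $f(e_0)$ reduces to $\tilde e$ modulo $J(\bar B)$, just as $\bar e$ does.

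Since $\bar e-f(e_0)\in J(\bar B)$, the explicit formula $u:=1+(\bar e-f(e_0))(2f(e_0)-1)\in 1+J(\bar B)\subset\bar B^\times$ defines a unit satisfying $u\,f(e_0)\,u^{-1}=\bar e$. Using the surjection $J(\bar A)\twoheadrightarrow J(\bar B)$, I would lift $u-1$ to some $x\in J(\bar A)$, whence $v:=1+x\in 1+J(\bar A)\subset\bar A^\times$ satisfies $f(v)=u$, and $e:=v e_0 v^{-1}$ is a primitive idempotent of $\bar A$ with $f(e)=\bar e$. The main technical step is the equality $f(J(\bar A))=J(\bar B)$: without it, one would only obtain a unit in $\bar B$ conjugating $f(e_0)$ to $\bar e$, with no way to lift the conjugation back to $\bar A$.
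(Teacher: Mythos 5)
Your proof is correct, and it follows a genuinely different route from the paper's. The paper first proves two lemmas on lifting \emph{arbitrary} units along surjections (first for finite-dimensional algebras over a field, then via a Hensel-type bootstrap for finitely generated $\bZ_p$-algebras), plus a lemma identifying conjugacy of idempotents $i,j$ with isomorphism $\bar A i \cong \bar A j$ of left modules. It then takes a complete set of primitive orthogonal idempotents $1 = \sum e_i$ in $\bar A$, pushes them to $\bar B$, invokes the Krull--Schmidt property of finitely generated modules over the semiperfect ring $\bar B$ to match a given primitive $\bar e$ with some $f(e_i)$ up to conjugacy, and then lifts the conjugating unit via the unit-lifting lemma. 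Your argument instead works directly with the semiperfect structure: the key technical step, $f(J(\bar A)) = J(\bar B)$, lets you split the induced surjection of semisimple quotients as a projection onto a direct factor, lift $(\tilde e,0)$ through $J(\bar A)$ to a primitive $e_0 \in \bar A$, and observe that $\bar e - f(e_0) \in J(\bar B)$, so the conjugating unit $u = 1 + (\bar e - f(e_0))(2f(e_0)-1)$ lies in $1 + J(\bar B)$ and is therefore lifted for free by the surjection on radicals. The trade-off: the paper needs the heavier unit-lifting lemma and Krull--Schmidt but its decomposition step is routine, whereas your proof substitutes an explicit conjugating formula that only needs units in $1+J$, at the cost of the preliminary verification that $f$ is surjective on Jacobson radicals; both are clean, complete arguments.
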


We put bars over these $\bZ_q$-algebras to emphasize that they are ordinary abelian groups, not ring spectra. It is likely that this is an old result, but we didn't find any place that it was stated very plainly, so we supply a proof in the rest of this section.

\begin{lem}
Let $\bar{A}$ and $\bar{B}$ be finitely generated associative algebras over a field, and let $f:\bar{A} \to \bar{B}$ be a surjective algebra homomorphism. Then every invertible element of $\bar{B}$ has an invertible preimage under $f$.
\end{lem}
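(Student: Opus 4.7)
The strategy is to reduce to the semisimple quotient $\bar{A}/J(\bar{A})$, where Artin--Wedderburn gives complete control. Recall that in a finite-dimensional algebra over a field the Jacobson radical $J(\bar{A})$ is nilpotent, so an element of $\bar{A}$ is a unit if and only if its image in $\bar{A}/J(\bar{A})$ is a unit; the same holds for $\bar{B}$.

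First I would verify the equality $f(J(\bar{A})) = J(\bar{B})$. One inclusion is automatic because $f(J(\bar{A}))$ is a nilpotent two-sided ideal of $\bar{B}$; for the other, $\bar{B}/f(J(\bar{A}))$ is a quotient of the semisimple algebra $\bar{A}/J(\bar{A})$, hence itself semisimple, forcing $J(\bar{B}) \subseteq f(J(\bar{A}))$. From this a short diagram chase shows $f^{-1}(J(\bar{B})) = \ker f + J(\bar{A})$, so the image of $\ker f$ in $\bar{A}/J(\bar{A})$ is exactly the kernel of the induced surjection $\bar{f}\colon \bar{A}/J(\bar{A}) \twoheadrightarrow \bar{B}/J(\bar{B})$ of semisimple algebras.

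By Artin--Wedderburn, $\bar{A}/J(\bar{A}) \cong \prod_i M_{n_i}(D_i)$, and every two-sided ideal is a direct product of a subset of the factors. Writing $\ker \bar{f} = \prod_{i \in S} M_{n_i}(D_i)$, the map $\bar{f}$ identifies $\bar{B}/J(\bar{B})$ with $\prod_{i \notin S} M_{n_i}(D_i)$. Given $b \in \bar{B}^{\times}$, pick any preimage $a_0 \in f^{-1}(b)$; its image $\bar{a}_0 \in \bar{A}/J(\bar{A})$ is already invertible in the factors indexed by $i \notin S$, since those components correspond to the invertible $\bar{b}$. Using the previous paragraph, one can choose $c \in \ker f$ whose image in $\bar{A}/J(\bar{A})$ equals $\sum_{i \in S}(1_{M_{n_i}(D_i)} - \pi_i(\bar{a}_0))$, where $\pi_i$ denotes projection onto the $i$th factor. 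Then $\bar{a}_0 + \bar{c}$ is invertible in every factor of $\bar{A}/J(\bar{A})$, hence $a_0 + c$ is a unit in $\bar{A}$, and by construction $f(a_0 + c) = b$. The one substantive input is the equality $f(J(\bar{A})) = J(\bar{B})$, which rests on the fact that quotients of semisimple algebras are semisimple; everything else is standard Artin--Wedderburn bookkeeping, so I do not expect a genuinely hard step.
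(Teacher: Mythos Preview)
Your proof is correct and follows essentially the same approach as the paper: both pass to the semisimple quotient by the Jacobson radical, use Artin--Wedderburn to identify the kernel of the induced map as a product of some of the simple factors, and then adjust the preimage in those factors to make it invertible. The paper organizes this via a case split (first treating $I \cap J = 0$ using a fiber product $\bar{A} \cong \bar{A}/I \times_{\bar{A}/(I+J)} \bar{A}/J$, then reducing the general case to this), whereas your version is slightly more streamlined, working directly with the surjection $\bar{A}/J(\bar{A}) \twoheadrightarrow \bar{B}/J(\bar{B})$ after establishing $f(J(\bar{A})) = J(\bar{B})$; but the mathematical content is the same.
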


\begin{proof}
Let $I \subset \bar{A}$ be the kernel of $f$, so that $f$ is isomorphic to the quotient map $\bar{A} \to \bar{A}/I$. We will show that every invertible element of $\bar{A}/I$ lifts to an invertible element of $\bar{A}$.

Let $J$ be the Jacobson radical of $\bar{A}$. Let us first prove the Lemma in the case that $I \cap J = 0$. If $I \cap J = 0$, then the natural map $\bar{A} \to \bar{A}/I \times_{\bar{A}/I+J} \bar{A}/J$ is an isomorphism of algebras, thus for every unit $u + I \in \bar{A}/I$, we seek a unit $v+J \in \bar{A}/J$ such that $u+I+J = v+I+J$. Such a $v+J$ exists by Wedderburn's theorem: $\bar{A}/J$ is semisimple thus a product of a finite set of simple algebras, and the image of $I$ in $\bar{A}/J$ is a product of a subset of the same simple algebras.

In case $I \cap J \neq 0$, the above argument shows we can lift units along $A/(I \cap J) \to A/I$. Since $\bar{A}$ is finite-dimensional over a field, it is an Artinian ring so $J$ (and therefore $I \cap J$) is nilpotent \cite[Theorem 1.3.1]{herstein}. It follows that units lift along $A \to A/I\cap J$.
\end{proof}

\begin{lem}
\label{lem:inv-lift}
Let $\bar{A}$ and $\bar{B}$ be associative $\bZ_p$-algebras that are finitely generated as $\bZ_p$-modules. Let $f:\bar{A} \twoheadrightarrow \bar{B}$ be a surjective algebra homomorphism. Then every invertible element of $\bar{B}$ has an invertible preimage under $f$.
\end{lem}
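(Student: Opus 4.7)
The plan is to deduce this from the previous lemma (the field case, applied to $\bar{A}/p \twoheadrightarrow \bar{B}/p$ as $\bF_p$-algebras) together with the fact that $\bar{A}$ and $\bar{B}$ are $p$-adically complete. The key observation is that a finitely generated $\bZ_p$-module is $p$-adically complete, so $p$ lies in the Jacobson radical of any finitely generated associative $\bZ_p$-algebra: indeed, for any $x$ in such an algebra, the geometric series $\sum_{n\geq 0}(px)^n$ converges and inverts $1 - px$, and the same trick inverts $1 - ypz$ for any $y,z$.

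First, I would pass to the mod-$p$ reduction. Let $v \in \bar{B}$ be a unit and let $\bar{v} \in \bar{B}/p$ denote its image. Since $f$ is surjective, so is the induced map $\bar{f}:\bar{A}/p \twoheadrightarrow \bar{B}/p$ of finite-dimensional $\bF_p$-algebras. By the previous lemma, there is a unit $\bar{u} \in \bar{A}/p$ with $\bar{f}(\bar{u}) = \bar{v}$. Pick any lift $u_0 \in \bar{A}$ of $\bar{u}$. I claim $u_0$ is a unit in $\bar{A}$: lifting an inverse $\bar{u}'$ of $\bar{u}$ to some $u_0' \in \bar{A}$, we have $u_0 u_0' = 1 + pa$ and $u_0' u_0 = 1 + pa'$ for some $a, a' \in \bar{A}$, and both are units since $p$ is in the Jacobson radical of $\bar{A}$; so $u_0$ has both a left and a right inverse, and is therefore a unit.

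Second, I would correct $u_0$ to land exactly on $v$. By construction $f(u_0) \equiv v \pmod{p\bar{B}}$, so $f(u_0) - v = pw$ for some $w \in \bar{B}$. Using surjectivity of $f$, choose $w' \in \bar{A}$ with $f(w') = w$, and set $u_1 := u_0 - p w'$. Then $f(u_1) = f(u_0) - pw = v$, and $u_1 \equiv u_0 \pmod{p\bar{A}}$, so $u_1$ is a unit in $\bar{A}$ by the same Jacobson-radical argument (being congruent to a unit modulo an element of the Jacobson radical). This $u_1$ is the desired unit preimage.

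I do not anticipate a serious obstacle: the only subtlety is verifying that $p$ lies in the Jacobson radical of $\bar{A}$, which follows from $p$-adic completeness of finitely generated $\bZ_p$-modules. The argument generalizes immediately to $\bZ_q$ in place of $\bZ_p$ (needed for the application to Proposition \ref{prop:Sp-permutation2}), since $\bZ_q$ is a finite free $\bZ_p$-module and $\bar{A}, \bar{B}$ remain finitely generated over $\bZ_p$, so the same Jacobson-radical computation applies verbatim.
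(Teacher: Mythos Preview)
Your proof is correct and follows essentially the same approach as the paper's: reduce modulo $p$, apply the previous lemma over $\bF_p$, lift to a unit using $p$-adic completeness (equivalently, that $p$ lies in the Jacobson radical), and then correct the lift by a multiple of $p$ to hit $v$ exactly. The paper phrases the invertibility step via the explicit geometric series $a'(1 - p\epsilon + p^2\epsilon^2 - \cdots)$ rather than invoking the Jacobson radical by name, but the content is identical.
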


\begin{proof}
Suppose $b \in \bar{B}$ is invertible. Let us first show there is an $a \in \bar{A}$ such that 
\begin{enumerate}
\item $a+px$ is invertible for all $x \in A$
\item $f(a) = b$ mod $p\bar{B}$
\end{enumerate}
Indeed $b + p\bar{B}$ is invertible in the $\bF_p$-algebra $\bar{B}/p\bar{B}$, and by applying the previous Lemma to $\bar{A}/p\bar{A} \to \bar{B}/p\bar{B}$ we conclude there is an $a \in \bar{A}$ such that $a + p\bar{A}$ is invertible in $A/p\bar{A}$ and $f(a) + p\bar{B} = b + p\bar{B}$. Since $\bar{A}$ is finitely generated over $\bZ_p$, it is $p$-adically complete and such an $a$ is invertible in $\bar{A}$: if $aa' = 1 + p \epsilon$ then $a'(1-p\epsilon +p^2 \epsilon^2 - \cdots)$ is the inverse to $a$.

It remains to find $x$ such that $f(a+px) = b$. By (2) above, $f(a) - b = py$ for some $y \in \bar{B}$. Let $x$ be such that $f(x) = -y$. Then
\[
f(a + px)-b = f(a) + pf(x) - b = f(a)-b + pf(x) = py +pf(x) = 0
\]
\end{proof}

\begin{lem}
\label{lem:conj-idem}
Let $\bar{A}$ be a $\bZ_p$-algebra that is finitely generated as a $\bZ_p$-module. Let $i,j \in \bar{A}$ be two idempotent elements. Then $\bar{A}i \cong \bar{A}j$ as left $\bar{A}$-modules if and only if $i$ is conjugate to $j$.
\end{lem}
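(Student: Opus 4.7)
The ``if'' direction is straightforward: if $j = u i u^{-1}$ for some $u \in \bar{A}^{\times}$, then right-multiplication by $u^{-1}$ defines an $\bar{A}$-linear isomorphism $\bar{A}i \to \bar{A}j$, with inverse right-multiplication by $u$; the only thing to check is that $aiu^{-1}$ really lies in $\bar{A}j$, which follows from $j = uiu^{-1}$ and $i^2 = i$.

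For the converse, my plan is to promote an isomorphism $\bar{A}i \xrightarrow{\sim} \bar{A}j$ to an automorphism of the regular left module $\bar{A}$ via Krull--Schmidt, and then extract the conjugating unit. Starting from the decompositions $\bar{A} = \bar{A}i \oplus \bar{A}(1-i) = \bar{A}j \oplus \bar{A}(1-j)$, Krull--Schmidt applied to finitely generated left $\bar{A}$-modules would force $\bar{A}(1-i) \cong \bar{A}(1-j)$; assembling the two isomorphisms yields an $\bar{A}$-linear automorphism $\phi$ of $\bar{A}$ carrying $\bar{A}i$ to $\bar{A}j$ and $\bar{A}(1-i)$ to $\bar{A}(1-j)$. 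Any such $\phi$ is necessarily right-multiplication by some unit $u \in \bar{A}^{\times}$. The relation $iu \in \bar{A}j$ then gives $iu = iuj$, while $(1-i)u \in \bar{A}(1-j)$ unpacks to $(1-i)uj = 0$, i.e.\ $iuj = uj$. Combining yields $uj = iu$, so $u^{-1} i u = j$, as desired.

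The main obstacle is justifying Krull--Schmidt for finitely generated $\bar{A}$-modules, which I would address as follows. For such a module $M$, the endomorphism ring $\End_{\bar{A}}(M)$ embeds into $\End_{\bZ_p}(M)$ and is therefore finitely generated as a $\bZ_p$-module; in particular it is $p$-adically complete. When $M$ is indecomposable, $\End_{\bar{A}}(M)$ contains no nontrivial idempotents, and by the idempotent-lifting property of $p$-adically complete rings the finite-dimensional $\bF_p$-algebra $\End_{\bar{A}}(M)/p$ inherits the same property, forcing it (and hence $\End_{\bar{A}}(M)$) to be local. This is the standard input to Krull--Schmidt, which could either be spelled out or referenced, e.g.\ to Curtis--Reiner.
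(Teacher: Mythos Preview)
Your argument is correct and follows essentially the same route as the paper: both proofs invoke Krull--Schmidt for finitely generated $\bar{A}$-modules to deduce $\bar{A}(1-i)\cong\bar{A}(1-j)$, then assemble the two isomorphisms into an automorphism of the regular module and read off the conjugating unit. The only differences are cosmetic---the paper cites Bass for semiperfectness rather than arguing locality of endomorphism rings directly, and it writes the unit explicitly as $\phi(i)+\psi(1-i)$ and verifies $uju^{-1}=i$ by direct computation rather than via the identification $\End_{\bar{A}}(\bar{A})\cong\bar{A}^{\op}$.
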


\begin{proof}
If $i = uju^{-1}$, then right multiplication by $u$ induces an isomorphism
$
\bar{A}i = \bar{A}uju^{-1} \stackrel{\sim}{\to} \bar{A}j
$.
Let us show the converse. Suppose $\phi:\bar{A}i \stackrel{\sim}{\to} \bar{A}j$ is an isomorphism of left $\bar{A}$-modules. Since $\phi(ai) = a\phi(i)$, $\phi$ is determined by $\phi(i) \in \bar{A}j$. Furthermore, since $i^2 = i$, $\phi(i) = \phi(i^2) = i\phi(i)$, so $\phi(i) \in i\bar{A}j$. Similarly, $\phi^{-1}$ is determined by $\phi^{-1}(j) \in j\bar{A}i$. The computations
\[
\begin{array}{c}
\phi^{-1}(j) \phi(i) = \phi(\phi^{-1}(j)i) = \phi(\phi^{-1}(j)) = j\\
\phi(i) \phi^{-1}(j) = \phi^{-1}(\phi(i)j) = \phi^{-1}(\phi(i)) = i
\end{array}
\]
show that $\phi^{-1}(j) \phi(i) = j$ and $\phi(i) \phi^{-1}(j) = i$

Recall that $\bZ_p$-algebras that are finitely generated as $\bZ_p$-modules are ``semiperfect'' in the sense of \cite[\S 2.1]{bass}. This means that finitely generated $\bar{A}$-modules --- such as $\bar{A}$ itself --- have the Krull-Schmidt property, a unique direct sum decomposition into indecomposable summands. Since $\bar{A} = \bar{A}i \oplus \bar{A}(1-i) = \bar{A}j \oplus \bar{A}(1-j)$, the hypotheses of the Lemma also show that $\bar{A}(1-i) \cong \bar{A}(1-j)$, via an isomorphism $\psi$. Thus by the same argument as above, $\psi(1-i) \in (1-i)A(1-j)$, $\psi^{-1}(1-j) \in (1-j) A (1-i)$, and
\[
\psi^{-1}(1-j)\psi(1-i) = 1-i \qquad \psi(1-i)\psi^{-1}(1-j) = 1-j.
\]

Let $u = \phi(i) + \psi(1-i)$ and $v = \phi^{-1}(j) + \psi^{-1}(1-j)$. Then $uv = 1=vu$, so $v = u^{-1}$, and $u$ conjugates $j$ to $i$:
\[
\begin{array}{rcl}
uju^{-1} & = & (\phi(i) + \psi(1-i))j(\phi^{-1}(j)+\psi^{-1}(1-j)) \\
& = & (\phi(i)j + \psi(1-i)j)(\phi^{-1}(j)+\psi^{-1}(1-j)) \\
& = & (\phi(i) + 0)(\phi^{-1}(j)+\psi^{-1}(1-j))
\\
& = & \phi(i)\phi^{-1}(j) + \phi(i)\psi^{-1}(1-j) \\
& = & \phi(i)\phi^{-1}(j) + 0 \\
& = & i
\end{array}
\]
\end{proof}

\begin{proof}[Proof of Proposition \ref{prop:Sp-permutation}]
It suffices to prove Proposition \ref{prop:Sp-permutation2}.
Choose a decomposition $1_{\bar{A}} = \sum e_i$, where the $e_i$ are orthogonal idempotents. Then $1_{\bar{B}} = f(1_{\bar{A}}) = \sum f(e_i)$. The set of nonzero $f(e_i)$ is a set of orthogonal idempotents in $\bar{B}$ that sum to $1$. 

Now suppose the $e_i$ are all primitive, and let us show that the nonzero $f(e_i)$ are also primitive. Each $\bar{A}e_i$ surjects onto $\bar{B}f(e_i)$; since this is a map of $\bar{A}$-modules and $\bar{A}e_i$ is an indecomposable projective $\bar{A}$-module, it follows that $\bar{B}f(e_i)$ is indecomposable as an $\bar{A}$-module.

Since the action of $\bar{A}$ factors through the surjection to $\bar{B}$, $\bar{B}f(e_i)$ is also indecomposable as a $\bar{B}$-module. Thus each $f(e_i)$ is primitive.

 If $e'$ is some other primitive idempotent of $\bar{B}$, then $\bar{B} \cong \bar{B}e' \oplus \bar{B}(1-e') \cong \bigoplus \bar{B}f(e)i$, so $Be' \cong Bf(e_i)$ for some $i$ by the Krull-Schmidt property. By Lemma \ref{lem:conj-idem}, there is an invertible $u \in B$ such that $uf(e_i)u^{-1} = e'$, and by Lemma \ref{lem:inv-lift} there is a unit $u' \in \bar{A}$ such that $f(u') = u$. Then $u' e_i (u')^{-1}$ is a primitive idempotent of $\bar{A}$ that lifts $e'$.
\end{proof}

\subsection{Blocks of $\bS_q[G]$}
\label{subsec:impliesbroue}

Let $b_1,b_2,\ldots$ be the block idempotents of $\bZ_q[G]$, so that
\begin{equation}
\label{eq:lossiferum}
\bZ_q[G] = \bZ_q[G]b_1 \times \bZ_q[G]b_2 \times \cdots
\end{equation}
By Proposition \ref{prop:barP}, there is a corresponding splitting of $\bS_q[G/\{1\}]$ as a left $G$-module --- write $\bS_q[G/\{1\}]b_i$ for the summand matching $\bZ_q[G]b_i$. Let $\bS_q[G]b_i$ denote the endomorphism $\bS_q$-algebra spectrum of $\bS_q[G/\{1\}]b_i$. By Proposition \ref{prop:PM},  $[\bS_q[G/\{1\}]b_i,\bS_q[G/\{1\}]b_j] = 0$ when $i \neq j$, and therefore there is an isomorphism of $\bS_q$-algebra spectra
\begin{equation}
\label{eq:insideout}
\bS_q[G] = \bS_q[G]b_1 \times \bS_q[G]b_2 \times \cdots
\end{equation}
that induces \eqref{eq:lossiferum} on taking $\pi_0$ or on applying $\otimes_{\bS_q} \bZ_q$.

\begin{prop}\label{prop: finiteness}
\label{prop:impliesbroue}
Let $G$ and $G'$ be finite groups, let $b$ be a block of $\bZ_q[G]$ and let $b'$ be a block of $\bZ_q[G']$. Suppose that there is an equivalence of stable $\infty$-categories 
\begin{equation}
\label{eq:psyntrophicum}
\LMod(\bS_q[G]b) \cong \LMod(\bS_q[G']b').
\end{equation}
Then for any $\bS_q$-algebra spectrum $k$, there are equivalences
\begin{enumerate}
\item $\LMod(k[G]b) \cong \LMod(k[G']b')$
\item $\LModft{k[G]b_i} \cong \LModft{k[G']b'_i}$
\item $\LMod(k[G]b_i)^{\omega} \cong \LMod(k[G]b_i)^{\omega}$
\end{enumerate}
\end{prop}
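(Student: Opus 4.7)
The first step is to express \eqref{eq:psyntrophicum} in terms of bimodules. Both $\bS_q[G]b$ and $\bS_q[G']b'$ are perfect over $\bS_q$, being summands (via Proposition \ref{prop:barP} and the splitting \eqref{eq:insideout}) of the free, finite-rank $\bS_q$-modules $\bS_q[G]$ and $\bS_q[G']$. Assuming the equivalence \eqref{eq:psyntrophicum} is $\bS_q$-linear (the natural hypothesis in context), it is colimit-preserving, so by \eqref{eq:role-of-bimodules} it is implemented by tensoring with a bimodule $N$, and its inverse by a bimodule $N'$ satisfying $N \otimes_{\bS_q[G]b} N' \cong \bS_q[G']b'$ and $N' \otimes_{\bS_q[G']b'} N \cong \bS_q[G]b$.

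The second step is to verify that $N$ (and analogously $N'$) is perfect as a left module, as a right module, and hence as an $\bS_q$-module. Left-perfectness of $N$ over $\bS_q[G']b'$ follows from Proposition \ref{prop:right-adjoint} applied to $F = N \otimes_{\bS_q[G]b} -$, whose right adjoint is the colimit-preserving inverse equivalence. Right-perfectness of $N$ over $\bS_q[G]b$ follows from the same proposition applied to the equivalence $- \otimes_{\bS_q[G']b'} N : \RMod(\bS_q[G']b') \to \RMod(\bS_q[G]b)$, read as a functor between left-module categories over the opposite algebras. Perfectness over $\bS_q$ then follows by the filtration argument of Proposition \ref{prop:FA-right-perfect} applied to one of the previous conclusions together with perfectness of the algebras.

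For part (1), the base-change formula \eqref{eq:haukensis} identifies $\LMod(k[G]b) \cong \Mod(k) \otimes_{\Mod(\bS_q)} \LMod(\bS_q[G]b)$ and similarly for the primed side. Tensoring \eqref{eq:psyntrophicum} with $\Mod(k)$ over $\Mod(\bS_q)$ yields the desired equivalence $\LMod(k[G]b) \cong \LMod(k[G']b')$, implemented by the bimodule $k \otimes_{\bS_q} N$, which inherits every perfectness property of $N$ (and similarly for $k \otimes_{\bS_q} N'$). For part (3), Proposition \ref{prop:FA-perf-B} applied to $k \otimes_{\bS_q} N$ and $k \otimes_{\bS_q} N'$ shows the equivalence restricts to $\LModomega{k[G]b} \cong \LModomega{k[G']b'}$. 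For part (2), since $k[G]b$ and $k[G']b'$ are perfect over $k$ (by base change), Proposition \ref{prop:FA-right-perfect} together with right-perfectness of $k \otimes_{\bS_q} N$ (and of $k \otimes_{\bS_q} N'$) gives the restriction to $\LModft{k[G]b} \cong \LModft{k[G']b'}$.

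The only nontrivial obstacle is the right-perfectness of $N$ in step two, since Proposition \ref{prop:right-adjoint} is stated only for left-module categories. The key observation is that an $\bS_q$-linear equivalence $\LMod(A) \cong \LMod(B)$ implemented by a bimodule $N$ also induces an equivalence $\RMod(B) \cong \RMod(A)$ given by right-tensoring with the same bimodule, to which the proposition applies via the identifications $\RMod(-) = \LMod((-)^{\op})$.
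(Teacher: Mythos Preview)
Your argument is correct and matches the paper's proof for parts (1) and (3), both of which the paper also derives directly from the base-change formula \eqref{eq:haukensis}. Your handling of the $\bS_q$-linearity hypothesis is appropriate: the paper's proof tacitly assumes it as well, since \eqref{eq:haukensis} is a statement about $\Mod(\bS_q)$-module categories.

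The one genuine difference is in part (2). The paper invokes Corollary~\ref{prop:restrict-to-ft}, using that $k[G]b$ and $k[G']b'$ carry symmetric structures (as blocks of group algebras). That corollary says: once both algebras are symmetric and $F$ preserves compacts, the condition that the right adjoint also preserve compacts (automatic here, since the adjoint is the inverse equivalence) is equivalent to right-perfectness of $F(A)$, and then finite type is preserved in both directions. You instead obtain right-perfectness of $N$ directly, by passing to opposite algebras and reapplying Proposition~\ref{prop:right-adjoint} to the induced equivalence $\RMod(\bS_q[G']b') \cong \RMod(\bS_q[G]b)$, and then feed this into Proposition~\ref{prop:FA-right-perfect}. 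Your route is slightly more general---it does not use the symmetric structure at all and would work for any pair of $k$-perfect algebras---while the paper's route packages the argument into a single citation at the cost of needing the symmetric hypothesis, which happens to be available here.
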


\begin{proof}
That \eqref{eq:psyntrophicum} implies (1) and (3) is a consequence of \eqref{eq:haukensis}; that it implies (2) is a consequence of Corollary \ref{prop:restrict-to-ft}.
\end{proof}

\subsection{The Rouquier equivalence over $\bS_q$}\label{sub:final}

\begin{prop}
\label{prop:rouq-S_q}
Suppose we are in the situation of Theorem \ref{thm:rouquier}, and let $M_0$ denote the complex of bimodules of that Theorem regarded as an object in $\Bimod(\bZ_q[G]b,\bZ_q[N_G(D)]b')$. Then there is an $M\in\Bimod(\bS_q[G]b,\bS_q[N_G(D)]b')$ such that 
$M \otimes_{\bS_q} \bZ_q$ is isomorphic to $M_0$.
\end{prop}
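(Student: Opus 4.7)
The plan is to lift the bimodules $N_0'$ and $N_0$ and the differential $d_0 \colon N_0' \to N_0$ separately to $\bS_q$, then take a fiber to form $M$. Set
\[
A := \bS_q[G]b \otimes_{\bS_q} (\bS_q[N_G(D)]b')^{\op} \quad \text{and} \quad A_0 := \pi_0(A) = \bZ_q[G]b \otimes_{\bZ_q} (\bZ_q[N_G(D)]b')^{\op},
\]
so that $\Bimod(\bS_q[G]b,\bS_q[N_G(D)]b') \cong \LMod(A)$ and similarly over $\bZ_q$.

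I would first lift $N_0$ using the permutation-module results. The bimodule $\bZ_q[G]$ of Theorem \ref{thm:rouquier}(1) is the permutation module for the finite $(G \times N_G(D)^{\op})$-set $G$ (with action $(g,n)\cdot x = gxn^{-1}$), and $N_0$ is a summand of it living in the $(b,b')$-block. Applying Proposition \ref{prop:Sp-permutation} to a splitting refining both the block decomposition and the summand $N_0$ --- using that the block idempotents lift to $\bS_q$ (\S\ref{subsec:impliesbroue}) --- produces an $A$-module $N$ with $N \otimes_{\bS_q} \bZ_q \cong N_0$. To lift $N_0'$: since $N_0'$ is a projective $A_0$-module, Proposition \ref{prop:barP} (applied to $N_0'$ viewed as a graded $\pi_*(A)$-module concentrated in degree $0$) gives $N' \in \Proj(A)$ with $\pi_0(N') \cong N_0'$.

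To lift the differential: by Proposition \ref{prop:PM}, $[N', N]_{\LMod(A)} \cong \Hom_{\pi_*(A)}(\pi_*(N'), \pi_*(N))$, so I need to identify these $\pi_*$-modules. For any finite set $Y$, $\pi_*(\bS_q[Y]) \cong \pi_*(\bS_q) \otimes_{\bZ_q} \bZ_q[Y]$, giving $\pi_*(A) \cong \pi_*(\bS_q) \otimes_{\bZ_q} A_0$. If $e \in \pi_0 \End_A(\bS_q[G]) = \End_{A_0}(\bZ_q[G])$ cuts out $N$ as an $A$-summand of $\bS_q[G]$, then its action on $\pi_*(\bS_q[G])$ factors through the $\bZ_q[G]$-factor, so $\pi_*(N) \cong \pi_*(A) \otimes_{A_0} N_0$; analogously $\pi_*(N') \cong \pi_*(A) \otimes_{A_0} N_0'$ from the construction in Proposition \ref{prop:barP}. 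The map $d_0$ then extends $\pi_*(A)$-linearly to $\pi_*(N') \to \pi_*(N)$, which by Proposition \ref{prop:PM} lifts to a morphism $d \colon N' \to N$ in $\LMod(A)$. I would then set $M := \mathrm{fib}(d)$ (with a shift to match Rouquier's grading). Since $- \otimes_{\bS_q} \bZ_q$ preserves fibers and $\pi_0(d \otimes_{\bS_q} \bZ_q) = \pi_0(d) = d_0$ (applying Proposition \ref{prop:PM} again over $\bZ_q$), we obtain $M \otimes_{\bS_q} \bZ_q \cong M_0$.

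The main subtlety is correctly identifying $\pi_*(N)$ and $\pi_*(N')$ as graded $\pi_*(A)$-modules so that Proposition \ref{prop:PM} can be used to lift $d_0$ and to see the lift reduces back to $d_0$. Everything else is a direct application of the permutation-module lifting of \S\ref{sub:permlift}, the projective-lifting results of \S\ref{subsec:proj}, and the block idempotent lifting of \S\ref{subsec:impliesbroue}.
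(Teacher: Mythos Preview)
Your proposal is correct and follows essentially the same route as the paper's proof: lift $N_0$ via Proposition~\ref{prop:Sp-permutation}, lift the projective $N_0'$ via \S\ref{subsec:proj} (the paper phrases this as idempotent lifting, you invoke Proposition~\ref{prop:barP} directly), lift the differential via Proposition~\ref{prop:PM}, and take the cone/fiber. Your explicit identification of $\pi_*(N)$ and $\pi_*(N')$ as $\pi_*(\bS_q)\otimes_{\bZ_q}(-)$ is a useful elaboration of a step the paper leaves implicit, but the overall strategy is the same.
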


\begin{proof}
By Proposition \ref{prop:Sp-permutation}, $N_0$ lifts to a $(\bS_q[G]b,\bS_q[N_G(D)]b')$-bimodule $N$ such that $N \otimes_{\bS_q} \bZ_q \cong N_0$. Since $N_0'$ is projective, it is the image of an idempotent endomorphism of $(\bZ_q[G]b \otimes_{\bZ_q} \bZ_q[N_G(D)]b'^{\op})^{\oplus n}$, which \S\ref{subsec:proj} lifts to an idempotent endomorphism of $(\bS_q[G]b \otimes \bS_q[N_G(D)]b'^{\op})^{\oplus n}$. This idempotent splits in $\Bimod(\bS_q[G]b,\bS_q[N_G(D)]b')$, so $N_0'$ lifts to a projective bimodule as well. By Proposition \ref{prop:PM} it follows that the map $N'_0 \to N_0$ lifts to a map $N' \to N$, and $M:=\mathrm{Cone}(N' \to N)$ lifts $M_0$.
\end{proof}

\begin{thm}
\label{thm:last}
Let $b$ be a block of $\bS_q[G]$, let $D$ be its defect group, let $b'$ be the Brauer correspondent block of $\bS_q[N_G(D)]$. Then there is an equivalence of stable $\infty$-categories $\LMod(\bS_q[G]b) \cong \LMod(\bS_q[N_G(D)]b')$ that carries $\LModft{\bS_q[G]b}$ onto $\LModft{\bS_q[N_G(D)]b'}$ and $\LMod(\bS_q[G]b)^{\omega}$ onto $\LMod(\bS_q[N_G(D)]b')^{\omega}$. 
\end{thm}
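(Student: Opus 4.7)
The plan is to combine the lift constructed in Proposition \ref{prop:rouq-S_q} with the change-of-rings machinery of \S\ref{subsec:change-of-rings}, and then invoke Proposition \ref{prop: finiteness} to obtain the restrictions to the finite-type and compact subcategories for free.

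First, I would apply Proposition \ref{prop:rouq-S_q} to obtain a bimodule $M \in \Bimod(\bS_q[G]b, \bS_q[N_G(D)]b')$ with $M \otimes_{\bS_q} \bZ_q \cong M_0$, where $M_0$ is Rouquier's two-term complex from Theorem \ref{thm:rouquier}. The functor to be shown an equivalence is then
\[
F := M \otimes_{\bS_q[N_G(D)]b'} - : \LMod(\bS_q[N_G(D)]b') \to \LMod(\bS_q[G]b).
\]
To invoke Proposition \ref{prop:conservative} with $k = \bS_q$ and $k' = \bZ_q$, I need to check three conditions.

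The first is that the source and target rings are perfect over $\bS_q$ and that $M$ is perfect as a left $\bS_q[G]b$-module and as a right $\bS_q[N_G(D)]b'$-module. Perfection of $\bS_q[G]b$ and $\bS_q[N_G(D)]b'$ over $\bS_q$ follows from the block decomposition \eqref{eq:insideout}, since $\bS_q[G]$ and $\bS_q[N_G(D)]$ are free $\bS_q$-modules of finite rank and each block is a summand. For $M$ itself, recall from Proposition \ref{prop:rouq-S_q} that $M = \mathrm{Cone}(N' \to N)$, where $N$ is a summand of $\bS_q[G]$ and $N'$ is projective as a bimodule. The bimodule $\bS_q[G]$ is finite free as a left $\bS_q[G]$-module and as a right $\bS_q[N_G(D)]$-module, hence so is any summand on either side; projective bimodules are perfect in both module structures by definition; and the cone inherits both perfections.

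The second condition is that $u^*:\Modomega{\bS_q} \to \Mod(\bZ_q)^{\omega}$ be conservative. This is exactly the content of the Whitehead theorem noted in the remark following Proposition \ref{prop:conservative}: a map of compact $\bS_q$-modules that becomes an isomorphism after tensoring up to $\bZ_q$ was already an isomorphism.

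The third input is that the induced $\bZ_q$-linear functor $F':\LMod(\bZ_q[N_G(D)]b') \to \LMod(\bZ_q[G]b)$ is an equivalence. But $F'$ is (by construction of $M$) given by tensoring with $M_0$, which is Rouquier's original equivalence of Theorem \ref{thm:rouquier}. Proposition \ref{prop:conservative} then delivers that $F$ is an $\bS_q$-linear equivalence of stable $\infty$-categories, proving the first assertion of Theorem \ref{thm:last}. The remaining assertions, that $F$ restricts to equivalences on $\LModft$ and $\LMod^{\omega}$, are automatic consequences of Proposition \ref{prop: finiteness} applied to this equivalence.

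I do not anticipate any serious obstacle; the real work has been done in proving Proposition \ref{prop:Sp-permutation} (the lifting of permutation summands via an idempotent-lifting argument), Proposition \ref{prop:conservative} (the general descent-of-equivalences statement), and Theorem \ref{thm:rouquier} (the classical result). The mild point to be careful about is that the perfection of $M$ \emph{as a right $\bS_q[N_G(D)]b'$-module}, not just as a left $\bS_q[G]b$-module, is genuinely needed: it is what allows Proposition \ref{prop:conservative} to access both the unit and the counit of the adjunction, and it is also what feeds into Proposition \ref{prop: finiteness} (via Corollary \ref{prop:restrict-to-ft}) to obtain the restriction to $\LModft$.
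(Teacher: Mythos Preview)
Your proposal is correct and follows essentially the same route as the paper: lift Rouquier's bimodule via Proposition~\ref{prop:rouq-S_q}, apply Proposition~\ref{prop:conservative} along $\bS_q \to \bZ_q$, and deduce the restrictions to $\LMod^{\omega}$ and $\LMod^{\mathrm{ft}}$ afterward. If anything, you are more careful than the paper in explicitly verifying the left- and right-perfection hypotheses on $M$ needed for Proposition~\ref{prop:conservative}; the only place where the paper is slightly more explicit is in arguing directly that the specific functor $F$ (not merely some equivalence) carries $\LMod^{\mathrm{ft}}$ into $\LMod^{\mathrm{ft}}$ before invoking Proposition~\ref{prop: finiteness}, whereas you rely on that proposition (via Corollary~\ref{prop:restrict-to-ft}) to supply this.
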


\begin{proof}
This is a consequence of Proposition \ref{prop:conservative}. We apply it in the case where $k \to k'$ is the truncation map $\bS_q \to \bZ_q$, $A = \bS_q[N_G(D)]b'$, $B = \bS_q[G]b$, and $F(A)$ is the bimodule $M$ of Proposition \ref{prop:rouq-S_q}. By Rouquier's Theorem \ref{thm:rouquier}, $M \otimes_{\bS_q} \bZ_q$ induces an equivalence $\LMod(\bZ_q[N_G(D)]b') \cong \LMod(\bZ_q[G]b)$ --- the hypotheses of Proposition \ref{prop:conservative} are satisfied and so the functor $M \otimes_{\bS_q[N_G(D)]b'}-$ induces an equivalence $\LMod(\bS_q[N_G(D)]b') \cong \LMod(\bS_q[G]b)$. Like any equivalence, it preserves the subcategories of compact objects and so restricts to an equivalence $\LMod(\bS_q[N_G(D)]b')^{\omega}\cong \LMod(\bS_q[G]b)^{\omega} $.
Since each term of the complex $M$ of \Cref{prop:rouq-S_q} carries  $\LModft{\bS_q[N_G(D)]b'}$  to $\LModft{\bS_q[G]b}$, the equivalence carries $\LModft{\bS_q[N_G(D)]b'}$ into $\LModft{\bS_q[G]b}$. Finally, this functor $\LModft{\bS_q[N_G(D)]b'} \to \LModft{\bS_q[G]b}$ is an equivalence by Proposition \ref{prop: finiteness}.
\end{proof}

\bibliographystyle{alpha}
\bibliography{Bibliography}

\end{document}